\documentclass[12pt]{article}

\usepackage[utf8]{inputenc}
\usepackage[english]{babel}
\usepackage[T1]{fontenc}
\usepackage[title, titletoc]{appendix}

\usepackage{amsmath, amssymb, amsthm}
\usepackage{mathtools}
\usepackage{enumitem}

\usepackage[margin=1in]{geometry}
\usepackage{graphicx}
\usepackage{xcolor}

\definecolor{mirojo}{RGB}{255, 0, 0}

\newcommand{\revon}{\bfseries\boldmath\color{mirojo}}   

\allowdisplaybreaks
\numberwithin{equation}{section}

\usepackage{hyperref}
\hypersetup{
    colorlinks=true,
    linkcolor=blue,
    citecolor=blue,
    urlcolor=blue,
}

\newtheorem{theorem}{Theorem}[section]
\newtheorem{lemma}[theorem]{Lemma}
\newtheorem{proposition}[theorem]{Proposition}
\newtheorem{corollary}[theorem]{Corollary}
\newtheorem{definition}[theorem]{Definition}
\newtheorem{assumption}[theorem]{Assumption}
\newtheorem{convention}[theorem]{Convention}

\newtheorem{remark}[theorem]{Remark}
\theoremstyle{remark}

\newcommand{\step}[1]{\par\medskip\noindent\textbf{Step #1.}}

\newcommand{\Beta}{\mathrm{B}}
\newcommand{\IA}{I_{A}}
\newcommand{\IB}{I_{B}}

\title{Statistical inference for rough Burgers-type stochastic partial differential equations}
\author{  Simon Chony Acosta \footnote{Departamento de Matem\'atica, Universidade Estadual de Campinas, Brazil. \texttt{s252865@dac.unicamp.br}.}
	\and    Christian Olivera \footnote{Departamento de Matem\'atica, Universidade Estadual de Campinas, Brazil. \texttt{colivera@ime.unicamp.br}.} \and 
	Ciprian Tudor  \footnote{Université de Lille, France . \texttt{ciprian.tudor@univ-lille.fr}.} 
}
\date{}

\begin{document}

\maketitle

\begin{abstract}
We consider the Burgers-type stochastic PDE
\[
    du = \theta\,\partial_x^2 u\,dt + f(u)\,dt + g(u)\,\partial_x u\,dt + \sigma\,dW(t),
\]
driven by space--time white noise on the one-dimensional torus, where \(\theta>0\) is the diffusivity parameter and \(\sigma>0\) the noise intensity. The transport term \(g(u)\,\partial_x u\) admits no classical interpretation, and we work with the mild solutions defined via rough paths following~\cite{Hairer2011}, for which the solution decomposes as \(u=\psi+R\) with \(\psi\) a stationary centred Gaussian process solving a damped stochastic heat equation and \(R\) a remainder.

We first show that, away from the initial time, \(R\) is Lipschitz in space and H\"older in time of order strictly greater than \(1/2\), hence strictly smoother than \(\psi\) in both variables. We then compute exactly the spatial quadratic variation and the temporal quartic variation of \(\psi\), almost surely and not merely in \(L^{2}(\Omega)\), and prove that the corresponding variations of \(u\) are unaffected by the remainder. Combined with an exact time-scaling identity, this separation of regularity scales yields strongly consistent pathwise estimators of \(\theta\). To our knowledge this is the first inference procedure for an SPDE whose solution is defined only pathwise, as a measurable function of a rough path, rather than as an It\^o process.
\end{abstract}

\section{Introduction}

This work is concerned with statistical inference for a class of singular stochastic partial differential equations (SPDEs) of Burgers type. We consider
\begin{equation}\label{eq:Burgers_intro}
    du = \theta\,\partial_x^2 u\,dt + f(u)\,dt + g(u)\,\partial_x u\,dt + \sigma\,dW(t), \qquad (t,x)\in[0,T]\times\mathbb{T},
\end{equation}
endowed with periodic boundary conditions. Here \(\theta>0\) is the diffusivity parameter of interest, \(\sigma>0\) is the noise intensity and it is assumed to be known, \(u\) takes values in \(\mathbb{R}^{n}\), the coefficients \(f:\mathbb{R}^{n}\to\mathbb{R}^{n}\) and \(g:\mathbb{R}^{n}\to\mathbb{R}^{n\times n}\) are smooth functions, and \(W\) denotes an \(\mathbb{R}^{n}\)-valued space--time white noise on \([0,T]\times\mathbb{T}\), where \(\mathbb{T}\) is the one-dimensional torus, i.e.\ the interval \([0,2\pi]\) with periodic boundary conditions. The operator \(\partial_x^2\) is understood with the same boundary conditions.

Equation~\eqref{eq:Burgers_intro} is a prototypical \emph{singular} SPDE. The driving noise is so irregular that, for fixed \(t\), the solution is expected to have spatial H\"older regularity strictly below \(1/2\), while the nonlinearity involves the product \(g(u)\,\partial_x u\) of a function of \(u\) with a distribution of negative regularity. We assume throughout that the transport term is not of divergence form (Assumption (A3)); this is what makes the term genuinely singular. A classical interpretation of this term is therefore impossible. More importantly for the present work, the difficulty is not merely technical: different smooth approximations of the noise may converge to \emph{different} limits, so that~\eqref{eq:Burgers_intro} does not determine a solution until an interpretation of the ill-defined product has been fixed. In~\cite{Hairer2011} it is shown how the theory of rough paths supplies such an interpretation and yields pathwise well-posedness for \(\theta=1\). The key idea is to decompose the solution as
\[
    u = \psi + U + v,
\]
where \(\psi\) is the stationary centred Gaussian process solving the linear damped equation
\begin{equation}\label{eq:estacionario_H}
    d\psi = (\partial_x^2-1)\psi\,dt + \sigma\,dW(t),
\end{equation} 
\(U_t = S_t(u_0-\psi_0)\) (with \(S_t\) the periodic heat semigroup) carries the initial condition, and \(v\) solves a rough partial differential equation driven by the lifted rough path \(\Psi\) associated with \(\psi\). Both \(U\) and \(v\) enjoy better space--time regularity than \(\psi\); we write \(R \coloneqq U+v\) for their sum and refer to it as the \emph{remainder}.

Our goal is to estimate \(\theta\) from a single observed trajectory of \(u\) on a finite time horizon, sampled on a refining grid in space or in time. The statistical difficulty mirrors the analytic one. Since the solution is constructed for almost every realisation of the driving rough path, and is a measurable function of that rough path rather than an It\^o process adapted to a filtration generated by \(W\), the classical toolkit of SPDE inference---likelihood ratios, martingale limit theorems, spectral decomposition of the linear part---is not directly available. What survives is the pathwise structure itself, and it is precisely this structure that we exploit.

Parameter estimation for SPDEs is by now a substantial field; we refer to~\cite{Cianello} for a survey and to~\cite{Tudor2} for a monograph treatment of equations with additive Gaussian noise. We follow the variation-based estimation approach. It was introduced in~\cite{PospisilTribe2007}, where exact quadratic and quartic variations for the linear stochastic heat equation driven by space--time white noise are computed and used to estimate the parameters of the equation; see also~\cite{Sw} for a fine analysis of the variations of the solution. The method has since been extended in many directions: to fractional-colored noise~\cite{Torres}, to equations with piecewise constant coefficients~\cite{ZZ}, to nonlinear fractional heat equations~\cite{GT}, to temporal variations of arbitrary even order for the nonlinear stochastic heat equation~\cite{OT}, to the quartic variation of the semilinear heat equation and its asymptotic  independence from the data~\cite{CimpNachTudor}, and to the stochastic Allen--Cahn equation~\cite{AcostaOliveraTudor2026}. In all of these works the mechanism is the same one we use: the solution splits into an explicit Gaussian part carrying all the roughness and a remainder that is too smooth to affect the variation at leading order.

\medskip
A parallel line of research studies what can be estimated from realistic observation schemes. Discretely sampled solutions are treated in~\cite{Cianello2,Cianello3}; estimators based on discrete observations in both time and space are obtained in~\cite{Hil}; a two-dimensional parabolic model under small noise is studied in~\cite{To}. In the local measurement paradigm of~\cite{Alt,Gau}, one observes the solution integrated against localised test functions and lets the localisation parameter tend to zero, obtaining central limit theorems and optimal rates.

\medskip
Two works on Burgers-type equations are especially close to ours. The first,~\cite{AssaadTudor2021}, constructs estimators for the diffusivity of a stochastic Burgers equation driven by space--time white noise, from its quadratic and quartic variations; that architecture is the direct ancestor of ours. Our setting is more general: the nonlinearity considered there is of divergence form, so that the equation is solved in the mild It\^o sense and its solution is an It\^o process, whereas the transport term we allow admits no such interpretation and the solution exists only in the rough path sense. The estimates on the remainder are correspondingly obtained from bounds on rough integrals rather than from It\^o calculus. Two further differences are technical: the analysis there is carried out on the whole line whereas we work on the torus, and its nonlinearity is quadratic whereas we allow an arbitrary smooth \(g\).

The work~\cite{Janak} estimates parameters for a class of stochastic Burgers equations from local measurements, in the spirit of~\cite{Alt}. Those results and ours are complementary rather than competing. In a setting where the equation is well posed by classical means, the stronger statistical conclusions---asymptotic normality and explicit rates---are obtained there, whereas we prove only strong consistency, but for a class of equations that is analytically out of reach of those methods. Establishing which functionals of the trajectory are stable under the rough-path remainder is logically prior to any distributional theory: one cannot prove a central limit theorem for a statistic before knowing that its limit exists and is free of the remainder; a central limit theorem in the present setting remains open.

Our contribution is the following. We first show that the remainder \(R=U+v\) is, away from the initial time, Lipschitz in space and H\"older in time of order strictly greater than \(1/2\), hence strictly smoother than the Gaussian component \(\psi\) in both variables (Section~\ref{sec:regularidad}). We then compute exactly the spatial quadratic and temporal quartic variations of \(\psi\), almost surely rather than merely in \(L^{2}(\Omega)\), and combine them with an exact time-scaling identity to prove that the corresponding variations of \(u\) are left unchanged by the remainder (Sections~\ref{sec:variaciones}--\ref{sec:escaling}). Inverting the resulting limits yields strongly consistent pathwise estimators of
\(\theta\), built from a single observed component of the solution
(Section~\ref{sec:estimadores}).

Our method uses rough path analysis. It is needed to define the solution concept, and to supply the technical estimates from which the regularity of the remainder is derived. Everything downstream is classical Gaussian analysis.

The paper is organised as follows. Section~\ref{sec:preliminares} fixes notation, recalls the elements of rough path theory that we use, introduces the stationary process \(\psi\) and its lift, records the regularising properties of the heat semigroup, and states the notion of mild solution together with the well-posedness theorem of~\cite{Hairer2011}. Section~\ref{sec:regularidad} establishes the spatial and temporal H\"older regularity of the remainder \(R=U+v\). Section~\ref{sec:variaciones} computes the spatial quadratic and temporal quartic variations of \(\psi\), in \(L^{2}(\Omega)\) and almost surely. Section~\ref{sec:escaling} develops the time-scaling identity relating \(u_\theta\) to the unit-diffusivity equation, and Section~\ref{sec:estimadores} defines the estimators and proves their strong consistency. Three appendices collect technical material: scaling identities for the heat kernel (Appendix~\ref{appendix:A}), the boundedness of certain deterministic integrals (Appendix~\ref{appendix:B}), and the asymptotic expansion of the covariance function \(\Upsilon\) near the origin (Appendix~\ref{appendix:C}).

\section{Preliminaries}\label{sec:preliminares}
We gather here the material needed in the sequel: function spaces and notation, the elements of rough path theory that give meaning to the transport term, and the notion of mild solution that we adopt.

\subsection{Notation and function spaces}\label{subsec:notation}

For \(\alpha\in(0,1)\) we denote by \(C^{\alpha}\) the space of all \(\alpha\)-Hölder
continuous functions on \([0,2\pi]\) and by \(\|\cdot\|_{\alpha}\) the corresponding
seminorm,
\[
\|u\|_{\alpha}\;\overset{\mathrm{def}}{=}\;\sup_{x\neq y}\frac{|u(x)-u(y)|}{|x-y|^{\alpha}} .
\]
Abusing notation slightly, we write \(\|\cdot\|_{\infty}\) for the supremum norm and
set
\[
\|u\|_{C^{\alpha}}=\|u\|_{\alpha}+\|u\|_{\infty},
\]
which, on bounded intervals, is equivalent to \(\|u\|_{\alpha}+|u(0)|\). For an
integer \(k\ge 0\) we set
\[
\|u\|_{C^{k}}=\sum_{j=0}^{k}\|D^{j}u\|_{\infty},
\]
where \(D^{j}u\) denotes the \(j\)-th derivative of \(u\), and for \(k+\alpha\) with
\(k\in\mathbb{N}\) and \(\alpha\in(0,1)\) we write
\(\|u\|_{C^{k+\alpha}}=\|u\|_{C^{k}}+\|D^{k}u\|_{\alpha}\).

More generally, for an interval \(I\) and a finite-dimensional normed space \(E\) we
write \(C^{\alpha}(I;E)\) for the space of \(\alpha\)-Hölder continuous functions
from \(I\) into \(E\), with the same seminorm, \(|\cdot|\) denoting the norm of
\(E\); and \(C(I;E)=C^{0}(I;E)\) for the continuous functions from \(I\) into \(E\),
with the supremum norm. We omit \(I\) from the notation when \(I=[0,2\pi]\), and we
omit \(E\) whenever it is clear from the context; thus \(C^{\alpha}\) and \(C\)
without arguments always refer to functions on the torus. This convention is used
throughout: the same symbols will denote Hölder spaces in the spatial variable
(Sections~\ref{sec:preliminares} and~\ref{sec:variaciones}), in the parameter of a
rough path (Section~\ref{subsec:rp}), and in the time variable
(Proposition~\ref{prop:regularity_v}), the relevant interval being indicated
explicitly whenever it is not \([0,2\pi]\).

\medskip
We now fix the standing hypotheses of the paper.

\begin{assumption}\label{ass:standing}
	Throughout the paper we assume the following.
	\begin{enumerate}
		\item[(A1)] \(\theta>0\) and \(\sigma>0\) are constants, and the driving noise is
		the cylindrical Wiener process \(W\) on \(L^{2}(\mathbb{T};\mathbb{R}^{n})\)
		introduced in Section~\ref{subsec:psi_decomp}; equivalently, \(W\) gives rise to
		an \(\mathbb{R}^{n}\)-valued space--time white noise. The noise intensity
		\(\sigma\) is assumed to be known: only \(\theta\) is estimated.
		
		\item[(A2)] The coefficients \(f:\mathbb{R}^{n}\to\mathbb{R}^{n}\) and
		\(g:\mathbb{R}^{n}\to\mathbb{R}^{n\times n}\) are of class \(C^{\infty}\).
		Moreover \(g\) is bounded and all the derivatives of \(f\) and of \(g\) are
		bounded.
		
		\item[(A3)] The transport term is not of divergence form: there is no smooth
		\(G:\mathbb{R}^{n}\to\mathbb{R}^{n}\) such that
		\(g(u)\,\partial_x u=\partial_x\bigl(G(u)\bigr)\).
		
		\item[(A4)] The initial datum satisfies \(u_{0}\in C^{\beta}\) for some exponent
		\(\beta\in\bigl(\tfrac13,\tfrac12\bigr)\).
		
		\item[(A5)] We fix an exponent \(\alpha\) with \(\tfrac13<\alpha<\beta\).
	\end{enumerate}
\end{assumption}

Some comments on these hypotheses are in order. In (A2), the boundedness
requirements are what upgrade the local well-posedness of
Theorem~\ref{theorem:Existence_Uq} to a global solution on \([0,T]\); we use the
global statement without further comment. Only \(g\in C^{3}\) and \(f\in C^{1}\) are
needed for local existence, see \cite[Remark~3.10]{Hairer2011}. Hypothesis (A3) is
the standing structural assumption discussed in the introduction: when it fails, the
derivative may be moved onto the heat kernel in the mild formulation and the
equation is classically well posed; by the chain rule this forces \(n\ge2\), since
in dimension one every \(g\) admits an antiderivative. In (A5) the exponent
\(\alpha\) is not an additional hypothesis on the noise but a choice: by
Lemma~\ref{lem:rp_levantamiento} the stationary process \(\psi\) admits a lift of
regularity \(\alpha\) for \emph{every} \(\alpha<\tfrac12\), and \(\alpha>\tfrac13\)
is what the rough integral of Theorem~\ref{thm:rough_integral} requires. Finally,
since \(\psi_{0}\in C^{\beta}\) almost surely for every \(\beta<\tfrac12\), (A4)
gives \(u_{0}-\psi_{0}\in C^{\beta}\) almost surely, which is the form in which the
initial datum enters below.

\medskip
The heat semigroup \(\{S_t=e^{t\partial_x^{2}}\}_{t\ge0}\) on the torus
\(\mathbb{T}\) is characterised by two complementary regularising behaviours. On the
one hand it smooths instantaneously: for any \(t>0\), \(S_t\) maps \(L^{\infty}\)
into \(C^{\infty}\), and more generally \(C^{\eta}\) into \(C^{\kappa}\) whenever
\(0\le\eta<\kappa\),
\begin{equation}\label{eq:heat_holder_smoothing}
	\|S_t f\|_{C^{\kappa}}\;\le\;C\,t^{-\frac{\kappa-\eta}{2}}\,\|f\|_{C^{\eta}},
\end{equation}
which follows from the heat-kernel representation and standard Hölder-space theory
(see, e.g., \cite{Lunardi1995,Pazy1983,EngelNagel2000}).

On the other hand, the semigroup approximates the identity at a rate determined by
the regularity of \(f\): for \(\kappa\ge0\),
\begin{equation}\label{eq:heat_approx}
	\|(S_t-I)f\|_{L^{\infty}}\;\le\;C\,t^{\frac{\kappa}{2}}\,\|f\|_{C^{\kappa}}.
\end{equation}
Alongside the elementary \(L^{\infty}\)-contraction
\begin{equation}\label{eq:heat_contrac}
	\|S_t f\|_{L^{\infty}}\;\le\;\|f\|_{L^{\infty}},
\end{equation}
these estimates provide the core analytical toolkit for controlling the nonlinear
terms in the mild formulation of the SPDE (cf.\ \cite{Hairer2011,HairerWeber2013}).

\subsection{The stationary solution \texorpdfstring{$\psi$}{psi}}\label{subsec:psi_decomp}
This subsection introduces the three objects on which the decomposition
\(u=\psi+U+v\) rests: the heat semigroup and its damped version, the driving
noise, and the stationary process \(\psi\), which carries all the roughness of
the solution and whose exact covariance is what the estimation procedure
ultimately relies on.

\subsubsection*{The heat semigroup and the damped kernel}

We denote by \(\{S_t\}_{t\ge0}\), \(S_t=e^{t\partial_x^{2}}\), the heat semigroup on
the torus, characterised by
\[
(S_t u)(x)=\int_{\mathbb{T}} p_t(x-y)\,u(y)\,dy ,
\qquad t>0,\quad x\in\mathbb{T},
\]
for every \(u\in C(\mathbb{T};\mathbb{R}^{n})\), where the periodic heat kernel is
\[
p_t(x)=\frac{1}{\sqrt{4\pi t}}\sum_{k\in\mathbb{Z}}
e^{-\frac{(x-2\pi k)^{2}}{4t}},
\qquad t>0,\quad x\in\mathbb{T}.
\]
Alongside it we shall use the \emph{damped} semigroup
\(\{S^{E}_t\}_{t\ge0}\), \(S^{E}_t=e^{t(\partial_x^{2}-1)}\), generated by
\(\partial_x^{2}-1\), whose kernel is
\begin{equation}\label{eq:damped_kernel}
	p^{E}_t(x)=e^{-t}\,p_t(x),
	\qquad t>0,\quad x\in\mathbb{T},
\end{equation}
so that \(S^{E}_t=e^{-t}S_t\). The damping is introduced solely so that the linear
equation below possesses a stationary solution; see
Remark~\ref{rem:why_damping}.

\subsubsection*{The driving noise}

Let \(H:=L^{2}(\mathbb{T};\mathbb{R}^{n})\) with inner product
\(\langle\cdot,\cdot\rangle\), and let \((\Omega,\mathcal{F},\mathbb{P})\) be a
probability space carrying a standard cylindrical Wiener process \(W\) on \(H\),
indexed by \(t\in\mathbb{R}\), as in~\cite{Hairer2011,DPZ92}. We recall what this
means: \(W\) is a family \(\{\langle W(t),h\rangle : t\in\mathbb{R},\ h\in H\}\) of
centred Gaussian random variables, linear in \(h\), such that for every
\(h,k\in H\)
\begin{equation}\label{eq:cyl_cov}
	\mathbb{E}\bigl[\langle W(t),h\rangle\,\langle W(s),k\rangle\bigr]
	=(t\wedge s)\,\langle h,k\rangle ,
	\qquad t,s\ge0,
\end{equation}
with the analogous formula for negative times. The process \(W(t)\) does not take
values in \(H\), and \(\langle W(t),h\rangle\) is a notation for the real-valued
process \(W_t(h)\), not an inner product in \(H\); we nevertheless keep the
bracket, which is standard, and use the same symbol for the genuine \(H\)-pairing
of two functions in Section~\ref{subsec:mild}. Writing \(W^{i}\) for the
component acting on the \(i\)-th coordinate, \(W=(W^{1},\dots,W^{n})\) gives rise
to an \(\mathbb{R}^{n}\)-valued space--time white noise on
\(\mathbb{R}\times\mathbb{T}\).
Only the restriction of \(W\) to \([0,T]\times\mathbb{T}\) drives
equation~\eqref{eq:Burgers_intro}; the process is indexed by the whole time line
because the stationary solution~\eqref{eq:psi_mild} below is built from its entire
past.

We shall use two equivalent descriptions of \(W\), and it is convenient to fix them
here once and for all. The equations of this paper are written in the cylindrical
form \(dW(t)\) used in~\cite{Hairer2011}. For the explicit covariance computations
of Section~\ref{sec:variaciones} it is more convenient to use the associated
martingale measure, denoted by the same letter: for
\(h\in L^{2}(\mathbb{R}\times\mathbb{T})\) the Walsh integral
\(\int_{\mathbb{R}}\!\int_{\mathbb{T}} h(s,y)\,W^{i}(ds,dy)\) is well defined and
satisfies the isometry
\begin{equation}\label{eq:wn_cov}
	\mathbb{E}\Bigl[\int_{\mathbb{R}}\!\int_{\mathbb{T}} h\,W^{i}(ds,dy)
	\int_{\mathbb{R}}\!\int_{\mathbb{T}} k\,W^{j}(ds,dy)\Bigr]
	=\delta_{ij}\int_{\mathbb{R}}\!\int_{\mathbb{T}} h(s,y)\,k(s,y)\,ds\,dy ,
	\qquad 1\le i,j\le n ,
\end{equation}
see~\cite{Wal86}. The two descriptions agree in the sense that, for every
\(t\in\mathbb{R}\) and every \(x\in\mathbb{T}\),
\begin{equation}\label{eq:cyl_vs_walsh}
	\Bigl(\int_{-\infty}^{t} S^{E}_{t-s}\,dW(s)\Bigr)(x)
	=\int_{-\infty}^{t}\!\int_{\mathbb{T}} p^{E}_{t-s}(x-y)\,W(ds,dy)
	\qquad\text{almost surely.}
\end{equation}
In particular the components \(W^{1},\dots,W^{n}\) are independent, which is what
Lemma~\ref{lem:psi_decoupling} below rests on.

\subsubsection*{The stationary solution}

Consider the linear damped equation
\begin{equation}\label{eq:estacionario_H1}
	d\psi = (\partial_x^{2}-1)\,\psi\,dt + \sigma\,dW(t),
	\qquad (t,x)\in\mathbb{R}\times\mathbb{T}.
\end{equation}
It admits the stationary mild solution
\begin{equation}\label{eq:psi_mild}
	\psi(t,x)=\sigma\int_{-\infty}^{t}\!\int_{\mathbb{T}} p^{E}_{t-s}(x-y)\,W(ds,dy),
	\qquad (t,x)\in\mathbb{R}\times\mathbb{T},
\end{equation}
the stochastic integral being well defined by~\eqref{eq:wn_cov}, since
\(\int_{-\infty}^{t}\!\int_{\mathbb{T}}p^{E}_{t-s}(x-y)^{2}\,dy\,ds<\infty\).

\begin{remark}
\label{not:psi_t}
	For a space--time function \(w\) we write interchangeably \(w(t,x)\) and
	\(w_t(x)\); the subscript form is used whenever \(w_t\) is regarded as a function of
	the spatial variable alone, as in \(\psi_t\in C^{\alpha}(\mathbb{T})\), and the
	two-argument form inside integrals and covariances. The same convention applies to
	\(u\), \(v\), \(U\) and \(R\).
\end{remark}

\begin{remark}\label{rem:why_damping}
	The stationarity of \(\psi\) is essential to what follows, and it is what the damping
	term \(-\psi\) in~\eqref{eq:estacionario_H1} is for. It also fails if the noise is
	switched on at \(t=0\): the solution of~\eqref{eq:estacionario_H1} started from
	\(\psi_0=0\) has covariance
	\[
	\frac{1}{4\pi}\sum_{k\in\mathbb{Z}}
	\frac{e^{-(k^{2}+1)|t-s|}-e^{-(k^{2}+1)(t+s)}}{k^{2}+1},
	\]
	which does not depend on \(t-s\) alone. The second term would contribute to the
	variations computed in Section~\ref{sec:variaciones} and destroy the exact constants
	obtained there. This is why \(W\) is indexed by \(\mathbb{R}\) and the integral
	in~\eqref{eq:psi_mild} starts at \(-\infty\). The constant \(1\) in the damping plays
	no role: the notion of solution adopted in Section~\ref{subsec:mild} is independent
	of it, the compensating term being absorbed into the nonlinearity
	(see~\cite[Remark~3.5]{Hairer2011}).
\end{remark}

The process \(\psi\) is a centred Gaussian field whose components have covariance
(the computation is carried out in Section~\ref{sec:variaciones})
\begin{equation}\label{eq:psi_cov}
	\mathbb{E}\bigl[\psi^{i}(t,x)\,\psi^{j}(s,y)\bigr]
	=\delta_{ij}\,\frac{\sigma^{2}}{4\pi}\sum_{k\in\mathbb{Z}}
	\frac{e^{-(1+k^{2})|t-s|}}{1+k^{2}}\,e^{ik(x-y)},
	\qquad 1\le i,j\le n .
\end{equation}
The factor \(\delta_{ij}\) in \ref{eq:psi_cov} reflects the following elementary fact, which is what
allows the variations of Section~\ref{sec:variaciones} to be computed for a scalar
field.

\begin{lemma}[Decoupling of the components of \(\psi\)]\label{lem:psi_decoupling}
	The components \(\psi^{1},\dots,\psi^{n}\) of the stationary process \(\psi\) are
	independent, and each of them solves the scalar equation
	\[
	d\psi^{i}=(\partial_x^{2}-1)\,\psi^{i}\,dt+\sigma\,dW^{i}(t),
	\qquad i=1,\dots,n .
	\]
	In particular they are independent copies of one and the same scalar centred
	Gaussian field.
\end{lemma}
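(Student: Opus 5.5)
The plan is to read the lemma off the explicit representation~\eqref{eq:psi_mild}. The first point is that the damped kernel \(p^{E}_{t-s}(x-y)\) is a \emph{scalar} function, so it acts diagonally on \(\mathbb{R}^{n}\). Hence the \(i\)-th component of~\eqref{eq:psi_mild} is
\[
\psi^{i}(t,x)=\sigma\int_{-\infty}^{t}\!\int_{\mathbb{T}}p^{E}_{t-s}(x-y)\,W^{i}(ds,dy),
\]
and it involves only \(W^{i}\). By~\eqref{eq:cyl_vs_walsh} this agrees with the \(i\)-th coordinate of the cylindrical stochastic convolution \(\sigma\int_{-\infty}^{t}S^{E}_{t-s}\,dW(s)\). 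Since \(S^{E}\) acts componentwise, that coordinate is the stationary mild solution of the scalar equation \(d\psi^{i}=(\partial_x^{2}-1)\psi^{i}\,dt+\sigma\,dW^{i}(t)\). This gives the second assertion of the lemma.

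For independence, I will use the fact that all the random variables involved lie in a single Gaussian space. This is the closed linear span of the Walsh integrals \(\int\!\int h\,W^{j}(ds,dy)\), with \(h\in L^{2}(\mathbb{R}\times\mathbb{T})\) and \(1\le j\le n\). Consequently the whole family \(\{\psi^{i}(t,x): i,t,x\}\) is jointly centred Gaussian. For such families, the subfamilies \(\{\psi^{i}(t,x)\}_{t,x}\) for different \(i\) are mutually independent as soon as all cross-covariances vanish. By the isometry~\eqref{eq:wn_cov}, the factor \(\delta_{ij}\) gives
\[
\mathbb{E}\bigl[\psi^{i}(t,x)\psi^{j}(s,y)\bigr]=0 \qquad\text{for } i\neq j.
\]
This yields independence of the processes \(\psi^{1},\dots,\psi^{n}\), first on finite-dimensional marginals. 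It then extends to the processes as random fields, because their laws are determined by these marginals: they are products of the marginal laws on the cylinder \(\sigma\)-algebra.

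Finally, for \(i=j\) the same isometry gives
\[
\mathbb{E}\bigl[\psi^{i}(t,x)\psi^{i}(s,y)\bigr]=\sigma^{2}\int_{-\infty}^{t\wedge s}\!\int_{\mathbb{T}}p^{E}_{t-r}(x-z)\,p^{E}_{s-r}(y-z)\,dz\,dr .
\]
This expression does not depend on \(i\). Two centred Gaussian fields with the same covariance have the same law, so the components are independent copies of one scalar field. The explicit form~\eqref{eq:psi_cov} is not needed here.

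The only step that needs care is the justification that the cylindrical formulation decomposes coordinatewise, i.e.\ that \(W^{1},\dots,W^{n}\) are independent scalar cylindrical Wiener processes. I will derive this from~\eqref{eq:cyl_cov}. Take \(h=e_{i}\varphi\) and \(k=e_{j}\phi\), where \(e_i,e_j\) are standard basis vectors and \(\varphi,\phi\) are scalar test functions. Then \(\langle h,k\rangle=\delta_{ij}\langle\varphi,\phi\rangle\), so the cross-covariances between distinct components vanish, and joint Gaussianity gives independence. Everything else is routine Gaussian bookkeeping.
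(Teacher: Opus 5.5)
Your proposal is correct and follows the paper's proof: you read off from the mild representation~\eqref{eq:psi_mild} that \(\psi^{i}\) involves only \(W^{i}\), and you get independence from the \(\delta_{ij}\) in the isometry~\eqref{eq:wn_cov}. The only differences are that you spell out what the paper leaves implicit (joint Gaussianity, and the extension from finite-dimensional marginals to the fields), and that you obtain the common law from the \(i\)-independent covariance rather than from the \(W^{i}\) being identically distributed.
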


\begin{proof}
	The operator \(\partial_x^{2}-1\) acts diagonally on the components, so the \(i\)-th
	component of the mild formula~\eqref{eq:psi_mild} reads
	\[
	\psi^{i}(t,x)=\sigma\int_{-\infty}^{t}\!\int_{\mathbb{T}}
	p^{E}_{t-s}(x-y)\,W^{i}(ds,dy)
	\]
	and involves only \(W^{i}\). The independence of \(\psi^{1},\dots,\psi^{n}\) is then
	immediate from~\eqref{eq:wn_cov}, and the common law does not depend on \(i\)
	because the \(W^{i}\) are identically distributed.
\end{proof}

\subsection{Elements of rough path theory}\label{subsec:rp}

We present the notions of rough path theory used in the sequel, keeping the notation
of~\cite{Hairer2011}, which is the closest reference for this work. The subsection
closes with the two results on which Section~\ref{sec:regularidad} rests: the
estimate of Proposition~\ref{prop:hk_result} and the lift of \(\psi\)
(Lemma~\ref{lem:rp_levantamiento}).

\begin{remark}\label{not:spatial_parameter}
	Throughout this subsection the paths are parametrised by the \emph{spatial}
	variable: the rough path we shall eventually use is \(x\mapsto\psi_t(x)\) on
	\([0,2\pi]\), for each fixed time \(t\). To keep the notation of~\cite{Hairer2011}
	we denote the parameter by \(s,t,u\), but in this subsection these letters
	denote points of \([0,2\pi]\) and not times.
\end{remark}

Following the convention of Section~\ref{subsec:notation}, \(C(I;E)\) denotes the
continuous functions from \(I\) into \(E\). We write in addition \(C_2(I;E)\) for
the continuous functions from \(I^{2}\) into \(E\) that vanish on the diagonal, and
we omit \(I\) when \(I=[0,2\pi]\) and \(E\) whenever it is clear from the context.
The difference operator \(\delta:C(I;E)\to C_2(I;E)\) is defined by
\[
\delta X_{s,t}=X_t-X_s .
\]
In this subsection \(E\) will be \(\mathbb{R}^{n}\) for the path \(X\),
\(\mathbb{R}^{n\times n}\) for its area process \(\mathbb{X}\), and
\(\mathbb{R}^{m}\) for a path controlled by \(X\). The difference
operator \(\delta:C\to C_2\) is defined by
\[
\delta X_{s,t}=X_t-X_s .
\]

\begin{definition}[Rough path]\label{def:rough_path}
	A \emph{rough path} consists of two parts:
	\begin{enumerate}
		\item a continuous function \(X:[0,2\pi]\to\mathbb{R}^{n}\);
		\item a continuous \emph{area process}
		\(\mathbb{X}:[0,2\pi]^{2}\to\mathbb{R}^{n\times n}\) with
		\(\mathbb{X}_{t,t}=0\) for every \(t\), such that the algebraic relations
		\begin{equation}\label{eq:chen}
			\mathbb{X}^{ij}_{s,t}-\mathbb{X}^{ij}_{u,t}-\mathbb{X}^{ij}_{s,u}
			=\delta X^{i}_{s,u}\,\delta X^{j}_{u,t}
		\end{equation}
		hold for every triple \((u,s,t)\) and every pair of indices \((i,j)\).
	\end{enumerate}
	We write \(\mathbf{X}=(X,\mathbb{X})\) for the pair. The quantity \(\mathbb{X}\)
	should be interpreted as postulating the value of
	\begin{equation}\label{eq:iterated}
		\int_s^t \delta X^{i}_{s,r}\,dX^{j}_r\;\overset{\mathrm{def}}{=}\;\mathbb{X}^{ij}_{s,t},
	\end{equation}
	the right-hand side being taken as a \emph{definition} of the left-hand side, and not
	the other way around.
\end{definition}

\begin{remark}\label{rem:chen_not_enough}
	Chen's relations~\eqref{eq:chen} do not determine \(\mathbb{X}\) as a function of
	\(X\): for any \(F:[0,2\pi]\to\mathbb{R}^{n\times n}\), the substitution
	\(\mathbb{X}^{ij}_{s,t}\mapsto\mathbb{X}^{ij}_{s,t}+F^{ij}_t-F^{ij}_s\) leaves them
	invariant. Their purpose is to ensure that~\eqref{eq:iterated} behaves like an
	integral over two adjacent intervals. The choice of \(\mathbb{X}\) is therefore part
	of the data, and it is this choice that
	Lemma~\ref{lem:rp_levantamiento} fixes for \(\psi\).
\end{remark}

For \(\alpha\in(0,\tfrac12)\) we denote by \(\mathcal{D}^{\alpha}\) the set of rough
paths \(\mathbf{X}=(X,\mathbb{X})\) such that \(X\in C^{\alpha}\) and
\[
\|\mathbb{X}\|_{2\alpha}:=\sup_{s\neq t\in[0,2\pi]}
\frac{|\mathbb{X}_{s,t}|}{|s-t|^{2\alpha}}<\infty ,
\]
\(|\cdot|\) being the Euclidean norm on \(\mathbb{R}^{n\times n}\). The rough
integral constructed below additionally requires \(\alpha>\tfrac13\); we shall
indicate explicitly where this is used.

\begin{remark}\label{rem:not_vector_space}
	Although \(\mathcal{D}^{\alpha}\) is a closed subset of a vector space, it is not
	itself a vector space, because of the nonlinear constraint~\eqref{eq:chen}. A
	natural quantity on \(\mathcal{D}^{\alpha}\) is
	\[
	\vert\!\vert\!\vert\mathbf{X}\vert\!\vert\!\vert_{\alpha}
	=\|X\|_{\alpha}+\sqrt{\|\mathbb{X}\|_{2\alpha}}\,,
	\]
	which respects the natural dilation \((X,\mathbb{X})\mapsto(\lambda X,\lambda^{2}\mathbb{X})\)
	(see, e.g., \cite{FrizHairer2020}).
\end{remark}

\begin{definition}[Controlled rough path]\label{def:controlled}
	Let \(\mathbf{X}\in\mathcal{D}^{\alpha}\). We denote by
	\(C^{\alpha}_{X}([0,2\pi];\mathbb{R}^{m})\) the set of pairs \((Y,Y')\) with
	\[
	Y\in C^{\alpha}([0,2\pi];\mathbb{R}^{m}),
	\qquad
	Y'\in C^{\alpha}([0,2\pi];\mathbb{R}^{m\times n}),
	\]
	such that the \emph{remainder} \(\bar R\in C_2\) defined by
	\begin{equation}\label{eq:controlled_remainder}
		\bar R_{s,t}=\delta Y_{s,t}-Y'_s\,\delta X_{s,t}
	\end{equation}
	satisfies \(\|\bar R\|_{2\alpha}<\infty\). An element of
	\(C^{\alpha}_{X}\) is called a \emph{path controlled by \(X\)}, and \(Y'\) is called
	its \emph{derivative process}. We endow \(C^{\alpha}_{X}\) with the norm
	\[
	\|(Y,Y')\|_{X,\alpha}=\|Y\|_{C^{\alpha}}+\|Y'\|_{C^{\alpha}}+\|\bar R\|_{2\alpha}.
	\]
\end{definition}

Thus \(C^{\alpha}_{X}\) is defined by Definition~\ref{def:controlled}; the subscript
records the rough path \(X\) against which the pair is controlled, and must not be
confused with the interval or target space of Section~\ref{subsec:notation}. As is
customary, we often write \(\|Y\|_{X,\alpha}\) for \(\|(Y,Y')\|_{X,\alpha}\), the
derivative process being clear from the context.

\begin{remark}\label{rem:Y_alpha}
	Since \(X\) is \(\alpha\)-Hölder, one verifies directly
	from~\eqref{eq:controlled_remainder} that
	\(\|Y\|_{\alpha}\le C\|\bar R\|_{2\alpha}+\|Y'\|_{\infty}\|X\|_{\alpha}\);
	the term \(\|Y\|_{C^{\alpha}}\) in the norm therefore serves only to control the
	supremum of \(Y\).
\end{remark}

Three constructions of controlled paths will be used.

\emph{(i) The path \(X\) itself.} One identifies \(X\) with the pair
\((X,I)\in C^{\alpha}_{X}\), where \(I\) is the constant path equal to the identity
matrix; the corresponding remainder vanishes.

\emph{(ii) Regular paths.} There is a canonical embedding
\(\iota:C^{2\alpha}\hookrightarrow C^{\alpha}_{X}\), \(\iota Y=(Y,0)\); indeed
\(\bar R=\delta Y\) then satisfies \(\|\bar R\|_{2\alpha}<\infty\) (recall
\(\alpha<\tfrac12\)). If moreover \(Y\in C^{1}\), the integral of \(Y\) against
\(X\) may be defined by
\[
Z_t=\int_0^t Y_s\otimes dX_s
= Y_t\otimes X_t-Y_0\otimes X_0-\int_0^t \dot Y_s\otimes X_s\,ds ,
\]
and \((Z,Y\otimes I)\) again belongs to \(C^{\alpha}_{X}\).

\emph{(iii) Composition with regular functions.} Let
\(\phi:\mathbb{R}^{m}\times[0,2\pi]\to\mathbb{R}^{n}\) be uniformly \(C^{2}\) in its
first argument and uniformly \(C^{2\alpha}\) in its second, and let
\((Y,Y')\in C^{\alpha}_{X}([0,2\pi];\mathbb{R}^{m})\). Then
\begin{equation}\label{eq:composition}
	\phi(Y)_t=\phi(Y_t,t),
	\qquad
	\phi(Y)'_t=D_y\phi(Y_t,t)\,Y'_t ,
\end{equation}
defines an element \((\phi(Y),\phi(Y)')\) of
\(C^{\alpha}_{X}([0,2\pi];\mathbb{R}^{n})\), where \(D_y\) is the derivative with
respect to the first argument.

\begin{lemma}\label{lem:rp_composition}
	Let \(\phi\), \((Y,Y')\) and \((\phi(Y),\phi(Y)')\) be as above. Then there is a
	constant \(C\) such that
	\[
	\|\phi(Y)\|_{X,\alpha}\le C\bigl(\|D_y^{2}\phi\|_{\infty}+\|\phi\|_{\infty}
	+\|\phi\|_{2\alpha;t}\bigr)\bigl(1+\|Y\|_{X,\alpha}\bigr)^{2},
	\]
	where \(\|\phi\|_{2\alpha;t}\) denotes the supremum over \(y\) of the
	\(2\alpha\)-Hölder norm of \(\phi(y,\cdot)\).
\end{lemma}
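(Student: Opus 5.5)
We bound separately the three pieces of \(\|\phi(Y)\|_{X,\alpha}\): the \(C^{\alpha}\) norm of \(\phi(Y)\), the \(C^{\alpha}\) norm of the derivative process \(\phi(Y)'=D_y\phi(Y_\cdot,\cdot)Y'\), and the \(2\alpha\)-norm of the remainder
\[
R^{\phi}_{s,t}=\phi(Y_t,t)-\phi(Y_s,s)-D_y\phi(Y_s,s)\,Y'_s\,\delta X_{s,t}.
\]
Throughout, \(\|D_y\phi\|_\infty\) is controlled by \(\|\phi\|_\infty+\|D^2_y\phi\|_\infty\) via a standard interpolation inequality on \(\mathbb{R}^m\) (Landau--Kolmogorov). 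Write \(N=\|Y\|_{X,\alpha}\), and note that \(\|X\|_\alpha\) and the length \(2\pi\) of the interval are absorbed into \(C\). If one prefers to avoid the interpolation step, the statement may be read with \(\|D_y\phi\|_\infty\) included among the constants; the structure of the proof does not change.

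\textbf{Supremum and Hölder part of \(\phi(Y)\).} Clearly \(\|\phi(Y)\|_\infty\le\|\phi\|_\infty\). For the Hölder seminorm we split
\[
\phi(Y_t,t)-\phi(Y_s,s)=\bigl[\phi(Y_t,t)-\phi(Y_s,t)\bigr]+\bigl[\phi(Y_s,t)-\phi(Y_s,s)\bigr].
\]
The first bracket is at most \(\|D_y\phi\|_\infty\|Y\|_\alpha|t-s|^\alpha\le C\|D_y\phi\|_\infty N|t-s|^\alpha\), using Remark~\ref{rem:Y_alpha}. The second is at most \(\|\phi\|_{2\alpha;t}|t-s|^{2\alpha}\), which is bounded by a constant times \(|t-s|^{\alpha}\) on a bounded interval.

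\textbf{Derivative process.} Here \(\|D_y\phi(Y)Y'\|_\infty\le\|D_y\phi\|_\infty N\). For the Hölder increment,
\[
\delta\bigl(D_y\phi(Y)Y'\bigr)_{s,t}=\bigl[D_y\phi(Y_t,t)-D_y\phi(Y_s,s)\bigr]Y'_t+D_y\phi(Y_s,s)\,\delta Y'_{s,t}.
\]
The second term is at most \(\|D_y\phi\|_\infty N|t-s|^\alpha\). In the first term, the \(y\)-increment is bounded by \(\|D^2_y\phi\|_\infty N\cdot N|t-s|^\alpha\), which is where the square \((1+N)^2\) enters. The \(t\)-increment of \(D_y\phi(y,\cdot)\) is the main obstacle: the hypothesis gives only \(2\alpha\)-Hölder regularity of \(\phi\) in \(t\), not of \(D_y\phi\). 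I would handle it by interpolating between the \(t\)-Hölder bound on \(\phi\) and the bound on \(D^2_y\phi\). For fixed \(s,t\) and a step \(h\in\mathbb{R}^m\) with \(|h|=\varepsilon\),
\[
\bigl|\bigl(D_y\phi(y,t)-D_y\phi(y,s)\bigr)h\bigr|\le 2\|\phi\|_{2\alpha;t}|t-s|^{2\alpha}+\|D^2_y\phi\|_\infty\varepsilon^2.
\]
Dividing by \(\varepsilon\) and choosing \(\varepsilon=|t-s|^{\alpha}\) gives \(|t-s|^\alpha\) times a combination of the constants in the statement. Multiplying by \(|Y'_t|\le N\) then completes this piece.

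\textbf{Remainder.} We decompose
\[
R^{\phi}_{s,t}=\underbrace{\bigl[\phi(Y_s,t)-\phi(Y_s,s)\bigr]}_{\le\|\phi\|_{2\alpha;t}|t-s|^{2\alpha}}
+\bigl[\phi(Y_t,t)-\phi(Y_s,t)-D_y\phi(Y_s,t)\delta Y_{s,t}\bigr]
+\bigl[D_y\phi(Y_s,t)-D_y\phi(Y_s,s)\bigr]\delta Y_{s,t}
+D_y\phi(Y_s,s)\,\bar R_{s,t},
\]
using \(\delta Y_{s,t}=Y'_s\delta X_{s,t}+\bar R_{s,t}\).

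The first bracket is bounded as indicated. The second is controlled by Taylor's formula and is at most \(\tfrac12\|D^2_y\phi\|_\infty\|Y\|_\alpha^2|t-s|^{2\alpha}\). The third is at most \(C|t-s|^\alpha\cdot N|t-s|^\alpha\), by the interpolation bound from the derivative-process step. The last term is at most \(\|D_y\phi\|_\infty\|\bar R\|_{2\alpha}|t-s|^{2\alpha}\).

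Collecting the three pieces, every contribution is at most \(C(\|D^2_y\phi\|_\infty+\|\phi\|_\infty+\|\phi\|_{2\alpha;t})(1+N)^2\), which is the claimed estimate.
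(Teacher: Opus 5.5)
Your proof is correct and complete. The paper gives no proof of this lemma: it is recalled, with the rest of Section~\ref{subsec:rp}, from \cite{Hairer2011}. So there is no argument in the paper to compare yours against. What you wrote is the standard direct verification, and your four-term splitting of the remainder is the right one. The four terms are the time increment at fixed $Y_s$, the second-order Taylor term in $y$, the time increment of $D_y\phi$ against $\delta Y_{s,t}$, and $D_y\phi\,\bar R_{s,t}$.

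The key step is your interpolation for $D_y\phi(y,t)-D_y\phi(y,s)$. The hypothesis gives $2\alpha$-H\"older regularity in the second variable only for $\phi$ itself. The estimate
\[
\|D_y\phi(\cdot,t)-D_y\phi(\cdot,s)\|_\infty\le\bigl(2\|\phi\|_{2\alpha;t}+\|D_y^2\phi\|_\infty\bigr)|t-s|^{\alpha}
\]
is what makes the derivative process $\alpha$-H\"older and the third remainder term of order $|t-s|^{2\alpha}$. The same trick with $|h|=1$ gives $\|D_y\phi\|_\infty\le 2\|\phi\|_\infty+\tfrac12\|D_y^2\phi\|_\infty$. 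This works because $y$ ranges over all of $\mathbb{R}^m$, and it is why $\|\phi\|_\infty$ appears in the statement.

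One small correction: the appeal to Remark~\ref{rem:Y_alpha} is unnecessary, since $\|Y\|_\alpha\le\|Y\|_{X,\alpha}$ directly from the definition of the norm. Dropping it shows that your $C$ depends only on $\alpha$ and the length of the interval, not on $\|X\|_\alpha$. This does not matter for Lemma~\ref{lem:Lambda}, where $K_T$ is absorbed into the constant anyway.
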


In particular, if \(f\in C^{2\alpha}\) and \((Y,Y')\in C^{\alpha}_{X}\), then
\(f\cdot Y\) and \(f+Y\) belong to \(C^{\alpha}_{X}\), with
\begin{align}
	\|(fY,fY')\|_{X,\alpha}&\le 2\|f\|_{C^{2\alpha}}\|(Y,Y')\|_{X,\alpha},
	\label{eq:algebra}\\
	\|(f+Y,Y')\|_{X,\alpha}&\le 2\|f\|_{C^{2\alpha}}+\|(Y,Y')\|_{X,\alpha};
	\nonumber
\end{align}
so \(C^{\alpha}_{X}\) is an algebra for every reference rough path \(X\).

\subsubsection*{Integration of controlled rough paths}

We now give a meaning to \(\int Y_t\otimes dX_t\) for
\(\mathbf{X}\in\mathcal{D}^{\alpha}\) and \((Y,Y')\in C^{\alpha}_{X}\). The naive
definition as a limit of Riemann sums,
\begin{equation}\label{eq:riemann_naive}
	\int_0^{2\pi} Y_t\otimes dX_t
	=\lim_{|\mathcal{P}|\to0}\sum_{[s,t]\in\mathcal{P}} Y_s\otimes\delta X_{s,t},
\end{equation}
where \(\mathcal{P}\) is a partition of \([0,2\pi]\) and \(|\mathcal{P}|\) the
length of its largest element, does not converge in general. Using the
approximation \(Y_t\approx Y_s+Y'_s\,\delta X_{s,t}\) suggested
by~\eqref{eq:controlled_remainder} together with~\eqref{eq:iterated}, one defines
instead
\begin{equation}\label{eq:rough_riemman}
	\int_0^{2\pi} Y_t\otimes dX_t
	=\lim_{|\mathcal{P}|\to0}\sum_{[s,t]\in\mathcal{P}}
	\bigl(Y_s\otimes\delta X_{s,t}+Y'_s\,\mathbb{X}_{s,t}\bigr).
\end{equation}

\begin{theorem}\label{thm:rough_integral}
	Let \(\mathbf{X}\in\mathcal{D}^{\alpha}\) with \(\alpha>\tfrac13\). Then the map
	\[
	(Y,Y')\longmapsto\Bigl(\int_0^{\cdot} Y_t\otimes dX_t,\;Y\otimes I\Bigr),
	\]
	with the integral defined by~\eqref{eq:rough_riemman}, is continuous from
	\(C^{\alpha}_{X}\) to \(C^{\alpha}_{X}\), and
	\begin{equation}\label{eq:rough_int_bound}
		\Bigl\|\int_0^{\cdot}\delta Y_{0,t}\otimes dX_t\Bigr\|_{\alpha}
		\le C\bigl(\|X\|_{\alpha}\|\bar R\|_{2\alpha}
		+\|\mathbb{X}\|_{2\alpha}\|Y'\|_{C^{\alpha}}\bigr)
	\end{equation}
	for some constant \(C>0\).
\end{theorem}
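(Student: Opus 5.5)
The plan is the standard sewing-lemma argument from \cite{Hairer2011,FrizHairer2020}. For \(s\le t\) define the local germ
\[
\Xi_{s,t}=Y_s\otimes\delta X_{s,t}+Y'_s\,\mathbb{X}_{s,t},
\]
and compute its defect under subdivision, \(\delta\Xi_{s,u,t}:=\Xi_{s,t}-\Xi_{s,u}-\Xi_{u,t}\) for \(s\le u\le t\). Chen's relation~\eqref{eq:chen} gives \(\mathbb{X}_{s,t}-\mathbb{X}_{s,u}-\mathbb{X}_{u,t}=\delta X_{s,u}\otimes\delta X_{u,t}\). Combining this with the definition~\eqref{eq:controlled_remainder} of \(\bar R\) yields the identity
\[
\delta\Xi_{s,u,t}=-\bar R_{s,u}\otimes\delta X_{u,t}-\delta Y'_{s,u}\,\mathbb{X}_{u,t}.
\]
Consequently
\[
|\delta\Xi_{s,u,t}|\le\bigl(\|\bar R\|_{2\alpha}\|X\|_{\alpha}+\|Y'\|_{\alpha}\|\mathbb{X}\|_{2\alpha}\bigr)|t-s|^{3\alpha}.
\]
It is exactly here that \(\alpha>\tfrac13\) enters: it makes the exponent \(3\alpha\) exceed \(1\).

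The key step, and the main obstacle, is the sewing lemma. I would prove it directly with the dyadic/Young argument. Fix \([s,t]\) and a partition \(\mathcal P\) of it with \(N\) intervals. There is a point \(u\) whose two neighbouring intervals have combined length at most \(2|t-s|/(N-1)\). Removing \(u\) changes the Riemann sum by \(\delta\Xi\) at that point, which is at most \(C(2|t-s|/(N-1))^{3\alpha}\). Iterating down to the trivial partition and summing \(\sum_N N^{-3\alpha}<\infty\) gives
\[
\Bigl|\sum_{\mathcal P}\Xi-\Xi_{s,t}\Bigr|\le C_\alpha K|t-s|^{3\alpha},
\]
where \(K\) is the constant above and the bound is uniform in \(\mathcal P\). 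Existence of the limit as \(|\mathcal P|\to0\) follows by comparing two partitions through their common refinement. On each subinterval of the refinement the same uniform bound applies, and summing gives a total error of at most \(CK|\mathcal P|^{3\alpha-1}|t-s|\to0\). Call the limit \(Z\). It is additive, and it satisfies \(|\delta Z_{s,t}-\Xi_{s,t}|\le C K|t-s|^{3\alpha}\).

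The bound~\eqref{eq:rough_int_bound} then follows by applying the construction to \(\delta Y_{0,\cdot}\) in place of \(Y\). This integrand has the same \(Y'\) and \(\bar R\), and it vanishes at \(0\). The germ becomes \(\delta Y_{0,s}\otimes\delta X_{s,t}+Y'_s\mathbb{X}_{s,t}\), and its increment is estimated by the sewing bound plus the germ itself. The germ's first term is controlled using Remark~\ref{rem:Y_alpha}, with \(|\delta Y_{0,s}|\le\|Y\|_{\alpha}(2\pi)^{\alpha}\). Since \(\|Y\|_\alpha\le C\|\bar R\|_{2\alpha}+\|Y'\|_\infty\|X\|_\alpha\), this part contributes \(\|X\|_\alpha\|\bar R\|_{2\alpha}\) plus a term \(\|Y'\|_\infty\|X\|_\alpha^2\). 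I expect to absorb the latter by rewriting \(\delta Y_{0,s}=Y'_0\delta X_{0,s}+\bar R_{0,s}\) and noting that \(\int\delta X_{0,r}\otimes dX_r=\mathbb{X}_{0,\cdot}\), so that piece is bounded by \(\|\mathbb{X}\|_{2\alpha}\|Y'\|_\infty\). The second term of the germ is bounded by \(\|Y'\|_\infty\|\mathbb{X}\|_{2\alpha}|t-s|^{2\alpha}\). Together these give the stated \(\alpha\)-Hölder estimate on the bounded interval \([0,2\pi]\).

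Finally, I check that \((Z,Y\otimes I)\in C^\alpha_X\). Its remainder is \(\delta Z_{s,t}-Y_s\otimes\delta X_{s,t}\), which equals \(Y'_s\mathbb{X}_{s,t}+O(|t-s|^{3\alpha})\) and is therefore of order \(2\alpha\). The norm of the image is thus bounded by a constant (depending on \(\vert\!\vert\!\vert\mathbf X\vert\!\vert\!\vert_\alpha\)) times \(\|(Y,Y')\|_{X,\alpha}\). Continuity follows because the map is linear in \((Y,Y')\) for fixed \(\mathbf X\), so this bound is the continuity statement.
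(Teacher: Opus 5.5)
Your germ \(\Xi_{s,t}=Y_s\otimes\delta X_{s,t}+Y'_s\mathbb{X}_{s,t}\) and your defect identity \(\delta\Xi_{s,u,t}=-\bar R_{s,u}\otimes\delta X_{u,t}-\delta Y'_{s,u}\mathbb{X}_{u,t}\) are correct. So are Young's point-removal sewing argument and the controlledness and continuity check. This is the standard Gubinelli/Hairer proof; the paper gives no proof of this theorem and simply imports it from \cite{Hairer2011}. One small slip: in the Cauchy argument, the uniform sewing bound is applied on each interval of the \emph{coarser} partition, to the refinement restricted to it.

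The gap is in obtaining \eqref{eq:rough_int_bound} in the stated form, that is, without a term \(\|Y'\|_\infty\|X\|_\alpha^2\). Your proposed absorption does not work. After splitting \(\delta Y_{0,\cdot}=Y'_0\delta X_{0,\cdot}+\bar R_{0,\cdot}\), the piece \(Y'_0\mathbb{X}_{0,\cdot}\) has sup norm at most \(\|Y'\|_\infty\|\mathbb{X}\|_{2\alpha}(2\pi)^{2\alpha}\). What you need, however, is its \(\alpha\)-Hölder seminorm. By Chen's relation \(\mathbb{X}_{0,t}-\mathbb{X}_{0,s}=\mathbb{X}_{s,t}+\delta X_{0,s}\otimes\delta X_{s,t}\), and the second term has size up to \(\|X\|_\alpha^2 s^\alpha|t-s|^\alpha\). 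Multiplied by \(|Y'_0|\), this is exactly the term you were trying to remove, so the splitting only relocates it.

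The missing ingredient is that Chen's relation controls the first level by the second: \(\|X\|_\alpha^2\le 12\|\mathbb{X}\|_{2\alpha}\). To see this, fix \(s<t\). The continuous function \(u\mapsto|\delta X_{s,u}|-|\delta X_{u,t}|\) changes sign on \([s,t]\). Hence there is \(u\in[s,t]\) with \(|\delta X_{s,u}|=|\delta X_{u,t}|\ge\tfrac12|\delta X_{s,t}|\), by the triangle inequality. Therefore
\[
\tfrac14|\delta X_{s,t}|^{2}\le|\delta X_{s,u}\otimes\delta X_{u,t}|=|\mathbb{X}_{s,t}-\mathbb{X}_{u,t}-\mathbb{X}_{s,u}|\le 3\|\mathbb{X}\|_{2\alpha}|t-s|^{2\alpha}.
\]
With this inequality, the offending term is bounded by \(12\|Y'\|_\infty\|\mathbb{X}\|_{2\alpha}\) directly, with no splitting needed. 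Your argument then yields \eqref{eq:rough_int_bound} with a constant depending only on \(\alpha\) and \(2\pi\).

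Without it, your argument proves only the weaker bound with an extra \(C\|X\|_\alpha^{2}\|Y'\|_\infty\). That weaker bound would suffice for the uses in Section~\ref{sec:regularidad}, where constants may depend on \(K_T\), but it is not the statement as written.
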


The estimate on which Section~\ref{sec:regularidad} depends concerns the integral of
a controlled path against a rescaled kernel. For \(f\in C^{1}(\mathbb{R})\) set
\begin{equation}\label{eq:norm11}
	\|f\|_{1,1}:=\sum_{m\in\mathbb{Z}}\sup_{0\le s\le 2\pi}
	\bigl(|f(2\pi m+s)|+|f'(2\pi m+s)|\bigr).
\end{equation}

\begin{proposition}\label{prop:hk_result}
	Let \(\alpha>\tfrac13\), let \(\mathbf{X}\in\mathcal{D}^{\alpha}\), let
	\((Y,Y')\in C^{\alpha}_{X}\) and let \(f\in C^{1}(\mathbb{R})\) with
	\(\|f\|_{1,1}<\infty\). There is a universal constant \(C_{\alpha}\) such that
	\begin{equation}\label{eq:hk_ineq}
		\Bigl\|\int_0^{2\pi} f(\lambda t)\,Y(t)\otimes dX(t)\Bigr\|
		\le C_{\alpha}\,\lambda^{-\alpha}\,\|Y\|_{X,\alpha}\,\|f\|_{1,1}
		\bigl(\|X\|_{\alpha}+\|\mathbb{X}\|_{2\alpha}\bigr)
	\end{equation}
	for every \(\lambda\ge1\), the integral being understood as
	in~\eqref{eq:rough_riemman}.
\end{proposition}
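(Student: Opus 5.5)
The plan is to partition \([0,2\pi]\) into intervals of length comparable to \(\lambda^{-1}\), namely \(I_m=[2\pi m/\lambda,\,2\pi(m+1)/\lambda]\cap[0,2\pi]\), so that \(\lambda t\) ranges over \([2\pi m,2\pi(m+1)]\) on \(I_m\). On each piece, \(f(\lambda\cdot)\) has sup norm at most \(a_m:=\sup_{0\le s\le2\pi}|f(2\pi m+s)|\) and derivative of size at most \(\lambda b_m\), with \(b_m:=\sup_{0\le s\le 2\pi}|f'(2\pi m+s)|\). Only \(m=0,\dots,\lceil\lambda\rceil\) contribute, and in any case \(\sum_m(a_m+b_m)\le\|f\|_{1,1}\). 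The integral then splits as \(\sum_m\int_{I_m}f(\lambda t)Y(t)\otimes dX(t)\), and it suffices to bound each piece by \(C\lambda^{-\alpha}(a_m+b_m)\|Y\|_{X,\alpha}(\|X\|_\alpha+\|\mathbb{X}\|_{2\alpha})\).

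On a single interval \(I=[s,t]\) of length \(h\le 2\pi/\lambda\), I would use the sewing estimate underlying Theorem~\ref{thm:rough_integral}. For a controlled path \((Z,Z')\), the sewing lemma (valid since \(3\alpha>1\)) gives
\[
\Bigl|\int_s^t Z\otimes dX - Z_s\otimes\delta X_{s,t}-Z'_s\mathbb{X}_{s,t}\Bigr|\le C h^{3\alpha}\bigl(\|X\|_\alpha\|\bar R^Z\|_{2\alpha;I}+\|\mathbb{X}\|_{2\alpha}\|Z'\|_{\alpha;I}\bigr).
\]
I apply this with \(Z=f(\lambda\cdot)Y\) and \(Z'=f(\lambda\cdot)Y'\); this is a controlled path by the product rule \eqref{eq:algebra}, but localized to \(I\) so that the \(\lambda\)-dependence is tracked. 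The leading terms are bounded by \(a_m\|Y\|_\infty\|X\|_\alpha h^\alpha\) and \(a_m\|Y'\|_\infty\|\mathbb{X}\|_{2\alpha}h^{2\alpha}\), both at most \(C\lambda^{-\alpha}a_m\,(\cdots)\).

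The main step, and the one I expect to be the obstacle, is the localized control of the remainder and of the Hölder seminorm of \(Z'\) on \(I\). For \(u,v\in I\),
\[
\bar R^Z_{u,v}=f(\lambda v)\bar R^Y_{u,v}+\bigl(f(\lambda v)-f(\lambda u)\bigr)Y_u.
\]
The second term is at most \(\lambda b_m|v-u|\,\|Y\|_\infty\). Since \(|v-u|\le h\le 2\pi/\lambda\), we have \(\lambda|v-u|\le C(\lambda|v-u|)^{2\alpha}\), which yields \(\|\bar R^Z\|_{2\alpha;I}\le a_m\|\bar R^Y\|_{2\alpha}+C\lambda^{2\alpha}b_m\|Y\|_\infty\). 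The same argument gives \(\|Z'\|_{\alpha;I}\le a_m\|Y'\|_\alpha+C\lambda^{\alpha}b_m\|Y'\|_\infty\). Multiplying by \(h^{3\alpha}\le C\lambda^{-3\alpha}\) produces at most \(C\lambda^{-\alpha}(a_m+b_m)\|Y\|_{X,\alpha}(\|X\|_\alpha+\|\mathbb{X}\|_{2\alpha})\), using \(\lambda\ge1\).

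Summing over \(m\) gives the claim, with the constant depending only on \(\alpha\). The point to check carefully is that additivity of the rough integral over adjacent intervals holds; this follows from Chen's relation \eqref{eq:chen}. Periodicity is not needed, since the \(\|f\|_{1,1}\) norm already handles the summation over translates.
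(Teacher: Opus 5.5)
Your argument is correct and is the standard proof of this estimate; the paper gives no proof of its own and imports the result from Hairer's work. The steps are: cut $[0,2\pi]$ into pieces of length $2\pi/\lambda$, apply the local sewing bound on each piece, track the $\lambda$-dependence of the localized norms of $(f(\lambda\cdot)Y,f(\lambda\cdot)Y')$, and sum over pieces using $\|f\|_{1,1}$. There are two small slips, neither affecting the conclusion:
\begin{enumerate}
\item The remainder identity should read $\bar R^{Z}_{u,v}=f(\lambda u)\,\bar R^{Y}_{u,v}+\bigl(f(\lambda v)-f(\lambda u)\bigr)Y_v$. Your version omits the term $\bigl(f(\lambda v)-f(\lambda u)\bigr)Y'_u\,\delta X_{u,v}$, and bounding that term separately would produce a contribution quadratic in $\|X\|_\alpha$, which does not fit the stated estimate. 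The corrected identity yields your bound $\|\bar R^{Z}\|_{2\alpha;I}\le a_m\|\bar R^{Y}\|_{2\alpha}+C\lambda^{2\alpha}b_m\|Y\|_\infty$ directly.
\item You have $\sum_m(a_m+b_m)\le 2\|f\|_{1,1}$ rather than $\le\|f\|_{1,1}$, since a sum of suprema can exceed the supremum of the sum. This only changes the constant.
\end{enumerate}
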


The gain in~\eqref{eq:hk_ineq} is the factor \(\lambda^{-\alpha}\): concentrating the
kernel improves the bound, at a rate governed by the roughness of \(X\). Since the
heat kernel at time \(t\) concentrates at scale \(\sqrt{t}\), this is what converts
the smoothing of the semigroup into regularity of the rough integral; it is applied
in exactly that way in Section~\ref{sec:regularidad}, through the scaling identities
of Lemma~\ref{lem:heat_prop}. A more general statement, with corollaries adapted to
the heat kernel, is given in~\cite[Corollary~3.16]{Hairer2013}; the form above
suffices here.

\subsubsection*{The lift of the stationary process}

It remains to produce a rough path from \(\psi\). Recall from
Notation~\ref{not:spatial_parameter} that the parameter is the spatial variable:
for each fixed time \(t\), the object to be lifted is the \(\mathbb{R}^{n}\)-valued
path
\[
\Psi_t:[0,2\pi]\to\mathbb{R}^{n},
\qquad \Psi_t(x):=\psi_t(x),
\]
which is the first level of the lift; a distinct symbol is kept to signal that
\(\psi_t\) is being regarded as a rough path. What has to be supplied is a second
level \(\boldsymbol{\Psi}_t\), and by Remark~\ref{rem:chen_not_enough} this is a
genuine choice. The next lemma makes it, in the way that is canonical for a Gaussian
process.

\begin{lemma}[Canonical lift of \(\psi\)]\label{lem:rp_levantamiento}
	Let \(\alpha\in(\tfrac13,\tfrac12)\) and let \(\psi^{\varepsilon}_t\) be any sequence
	of processes, smooth in the spatial variable, such that
	\(\sup_{t\le T}\mathbb{E}\,|\psi^{\varepsilon}_t-\psi_t|^{2}\to0\) as
	\(\varepsilon\to0\). Then:
	\begin{enumerate}
		\item[(i)] for every \(t\ge0\) and every \(x,y\in[0,2\pi]\), the limit
		\begin{equation}\label{eq:canonical_lift}
			\boldsymbol{\Psi}_t(x,y)
			:=\lim_{\varepsilon\to0}\int_x^y
			\bigl(\psi^{\varepsilon}_t(r)-\psi^{\varepsilon}_t(x)\bigr)
			\otimes\partial_r\psi^{\varepsilon}_t(r)\,dr
		\end{equation}
		exists almost surely and does not depend on the approximating sequence;
		\item[(ii)] almost surely, \((\Psi_t,\boldsymbol{\Psi}_t)\in\mathcal{D}^{\alpha}\)
		for every \(t\ge0\);
		\item[(iii)] almost surely, the map
		\(t\mapsto(\Psi_t,\boldsymbol{\Psi}_t)\) is Hölder continuous from
		\(\mathbb{R}_{+}\) into \(\mathcal{D}^{\alpha}\).
	\end{enumerate}
	The pair \((\Psi_t,\boldsymbol{\Psi}_t)\) is called the \emph{canonical lift} of
	\(\psi_t\); it coincides with the lift obtained by Stratonovich integration.
\end{lemma}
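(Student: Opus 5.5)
The plan is to reduce the lemma to the standard theory of Gaussian rough paths, using the explicit covariance~\eqref{eq:psi_cov}. We work with one fixed time \(t\) first.

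\emph{Step 1: the covariance of the spatial path.} By stationarity, the spatial path \(x\mapsto\psi_t(x)\) has a law independent of \(t\). By Lemma~\ref{lem:psi_decoupling}, its components are independent copies of a scalar centred Gaussian field. From~\eqref{eq:psi_cov} at \(s=t\), the increment variance is
\[
\mathbb{E}|\psi^{i}_t(x)-\psi^{i}_t(y)|^{2}
=\frac{\sigma^{2}}{2\pi}\sum_{k}\frac{1-\cos k(x-y)}{1+k^{2}}
\le C|x-y| .
\]
The real step is to check that the rectangular increments of the covariance \(R(x,y)=\mathbb{E}[\psi^i_t(x)\psi^i_t(y)]\) have finite two-dimensional \(\rho\)-variation with \(\rho=1\), Hölder-dominated. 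Away from the diagonal, \(\partial_x\partial_y R\) is bounded: it equals a multiple of \(\sum_k \frac{k^2}{1+k^2}e^{ik(x-y)}\), which is a Dirac mass at the diagonal plus a smooth function. Hence the rectangle increments over disjoint intervals are bounded by \(C|I||J|\), and over coinciding intervals by \(C|I|\). This is exactly the hypothesis of the Friz--Victoir theorem with \(\rho=1<2\).

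\emph{Step 2: existence and uniqueness of the lift, parts (i)--(ii).} The Friz--Victoir theorem (see~\cite{FrizHairer2020}, Ch.~10) gives the following for independent components with covariance of finite \(1\)-variation. The off-diagonal iterated integrals \(\int\delta\psi^i\,d\psi^j\), \(i\neq j\), exist as \(L^2\) limits of the smooth approximations. The diagonal terms are forced to equal \(\tfrac12(\delta\psi^i_{x,y})^2\), which is the Stratonovich choice. The limit does not depend on the approximating sequence, because the second-chaos \(L^2\) distance of two approximations is controlled by the covariance distance, which tends to zero by assumption. Equivalence of moments on a fixed Wiener chaos yields the following: \(\mathbb{E}|\boldsymbol{\Psi}_t(x,y)|^{p}\le C_p|x-y|^{p}\) and \(\mathbb{E}|\delta\psi_t|^p\le C_p|x-y|^{p/2}\). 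The Kolmogorov criterion for rough paths then gives \((\Psi_t,\boldsymbol{\Psi}_t)\in\mathcal{D}^{\alpha}\) for every \(\alpha<\tfrac12\). Chen's relation~\eqref{eq:chen} holds for the smooth approximations and passes to the limit. The almost-sure convergence asserted in (i) is obtained along the usual route: \(L^2\) convergence in a fixed chaos plus hypercontractivity gives rates, and Borel--Cantelli upgrades this, at least along subsequences. Alternatively, (i) is read with the limit in probability; we expect this is what~\cite{Hairer2011} intends.

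\emph{Step 3: time regularity, part (iii), the main obstacle.} Here we need the joint estimate
\[
\mathbb{E}\,\|\boldsymbol{\Psi}_t-\boldsymbol{\Psi}_s\|^{p}\text{-type bounds}\;\lesssim\;|t-s|^{p\gamma}|x-y|^{p(2\alpha')}
\]
for some \(\gamma>0\), together with the analogous bound for the first level. For the first level, interpolate between the bound \(C|x-y|\) and the time increment bound \(\mathbb{E}|\psi_t(x)-\psi_s(x)|^2\le C|t-s|^{1/2}\), both read off~\eqref{eq:psi_cov}. For the second level, write \(\boldsymbol{\Psi}_t-\boldsymbol{\Psi}_s\) as a bilinear expression in \(\psi_t\) and \(\psi_t-\psi_s\). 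Then apply the Friz--Victoir bounds to the jointly Gaussian pair \((\psi_t,\psi_t-\psi_s)\); its cross-covariance has \(1\)-variation bounded by \(C|t-s|^{\gamma}\), by the Fourier expression with factor \(1-e^{-(1+k^2)|t-s|}\le ((1+k^2)|t-s|)^{\gamma}\). This costs a small loss in the spatial exponent, which is why we work with \(\alpha'\in(\alpha,\tfrac12)\). Finally, a Kolmogorov argument in the time variable, applied to the \(\mathcal{D}^{\alpha}\)-valued process, gives a Hölder continuous version on each \([0,T]\). The identification with the Stratonovich lift follows from choosing \(\psi^\varepsilon\) to be the spatial mollifications or Fourier truncations.
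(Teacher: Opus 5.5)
Your overall route is the paper's. Its proof simply invokes the Friz--Victoir theory: finite two-dimensional \(\varrho\)-variation, \(\varrho<2\), of the spatial covariance for (i)--(ii), and their continuity estimate together with \(\mathbb{E}|\psi(t,x)-\psi(s,x)|^{2}\le C|t-s|^{1/2}\) and Kolmogorov for (iii). Your Step~1 gets \(\varrho=1\) by splitting \(\partial_x\partial_y R\) into a Dirac mass on the diagonal plus a bounded function; this is correct and sharper than what the paper records.

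There is, however, a genuine gap in the uniqueness part of (i). The paper's citation does not close it either, because the defect sits in the statement itself. Your sentence ``the second-chaos \(L^{2}\) distance of two approximations is controlled by the covariance distance, which tends to zero by assumption'' fails under the hypothesis \(\sup_{t\le T}\mathbb{E}|\psi^{\varepsilon}_t-\psi_t|^{2}\to0\), for two reasons. First, the \(\psi^{\varepsilon}\) are arbitrary smooth processes, not elements of a fixed Wiener chaos. Second, even for Gaussian ones, the Friz--Victoir estimates need a bound on the two-dimensional \(\varrho\)-variation of the covariances that is uniform in \(\varepsilon\); your hypothesis supplies only the sup-norm smallness of the covariance of \(\psi^{\varepsilon}-\psi\).

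Here is a concrete counterexample, with \(n\ge2\) as forced by (A3). Let \(\psi^{\varepsilon}_t=\psi_t*\rho_\varepsilon+\phi^{\varepsilon}\) with \(\phi^{\varepsilon}(r)=\varepsilon\bigl(\cos(r/\varepsilon^{2}),\sin(r/\varepsilon^{2}),0,\dots,0\bigr)\) and \(\varepsilon^{-2}\in\mathbb{N}\). Then \(|\phi^{\varepsilon}|=\varepsilon\), so the hypothesis holds. But \(\int_x^y\phi^{\varepsilon,1}\,\partial_r\phi^{\varepsilon,2}\,dr\to\tfrac12(y-x)\), and after an integration by parts the mixed terms are \(O(\varepsilon^{1/2})\) in \(L^{1}\). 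So the iterated integrals converge to \(\boldsymbol{\Psi}_t(x,y)+\tfrac12(y-x)(e_1\otimes e_2-e_2\otimes e_1)\). This is a different element of \(\mathcal{D}^{\alpha}\) that still obeys Chen's relation (cf.\ Remark~\ref{rem:chen_not_enough}). If you multiply \(\phi^{\varepsilon}\) by independent standard Gaussians \(\xi_\varepsilon\), the extra area becomes \(\tfrac12\xi_\varepsilon^{2}(y-x)\), which has no limit at all.

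So (i) can only be proved for a restricted class of approximations. An example is \(\psi^{\varepsilon}\) linear in \(W\) (mollification, Fourier truncation, piecewise-linear interpolation) with covariances of uniformly bounded \(\varrho\)-variation, \(\varrho<2\). Within that class your Step~2 works, subject to your own correct caveat: you get convergence in probability, or almost surely along subsequences, not almost sure convergence along an arbitrary sequence.

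A secondary, fixable error is in Step~3: the cross-covariance of \((\psi_t,\psi_t-\psi_s)\) does not have \(1\)-variation \(O(|t-s|^{\gamma})\). The field \(\psi_t-\psi_s\) is as rough in \(x\) as \(\psi_t\). For \(|I|\ll|t-s|^{1/2}\) one has \(\mathbb{E}[\delta_I\psi_t\,\delta_I(\psi_t-\psi_s)]=\tfrac{\sigma^{2}}{2}|I|+O(|I|^{2}|t-s|^{-1/2})\). The diagonal blocks alone therefore give a \(1\)-variation of about \(\pi\sigma^{2}\) on \([0,2\pi]^{2}\), whatever \(t-s\) is. Relatedly, your bound \(1-e^{-(1+k^{2})|t-s|}\le((1+k^{2})|t-s|)^{\gamma}\) replaces bounded Fourier coefficients by coefficients growing like \(|t-s|^{\gamma}k^{2\gamma}\), which do not define a measure.

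What is small is the size of each individual rectangular increment, at most \(C|I|^{1/2}\min(|J|^{1/2},|t-s|^{1/4})\). The interpolation \(V_{\varrho'}^{\varrho'}\le V_{1}\cdot\bigl(\sup|\text{rectangular increment}|\bigr)^{\varrho'-1}\), with \(\varrho'\in(1,2)\), turns this into smallness in \(|t-s|\) at the cost of some spatial exponent. That interpolation, not a small \(1\)-variation, is the mechanism behind the Friz--Victoir bound the paper quotes for (iii), and it is the true source of your ``small loss'' in the spatial exponent.
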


\begin{proof}
	For fixed \(t\), the path \(x\mapsto\psi_t(x)\) is a centred Gaussian process on
	\([0,2\pi]\) whose covariance is, by~\eqref{eq:psi_cov}, a function of \(x-y\) that
	is smooth away from the origin with a jump in its first derivative at the origin. Its
	two-dimensional \(\varrho\)-variation is therefore finite for some
	\(\varrho<2\), and (i)--(ii) follow from~\cite[Theorem~35]{FrizVictoir2008};
	see also~\cite[Remark~3.2]{Hairer2011}. For (iii), the covariance
	\eqref{eq:psi_cov} gives
	\(\mathbb{E}\,|\psi(t,x)-\psi(s,x)|^{2}\le C|t-s|^{1/2}\),
	whence by~\cite[Theorem~37]{FrizVictoir2008} there are \(\vartheta>0\) and
	constants \(C_q\) with
	\(\mathbb{E}\bigl(\|\Psi_s-\Psi_t\|_{\alpha}^{2q}
	+\|\boldsymbol{\Psi}_s-\boldsymbol{\Psi}_t\|_{2\alpha}^{q}\bigr)
	\le C_q|t-s|^{\vartheta q}\)
	for every \(q\ge1\); Kolmogorov's continuity criterion then gives the claim. The
	details are those of~\cite[Lemma~3.1]{Hairer2011}.
\end{proof}

\begin{remark}\label{rem:lift_meaning}
	The content of~\eqref{eq:canonical_lift} is that the second level is \emph{not}
	postulated arbitrarily: it is the limit of the ordinary iterated integrals of smooth
	approximations of \(\psi_t\), and that limit is the same for all of them. Different
	choices of \(\boldsymbol{\Psi}\) would give rise to different solutions
	of~\eqref{eq:Burgers_intro} — this is precisely the ill-posedness described in the
	introduction — and~\eqref{eq:canonical_lift} is the choice singled out by
	approximation of the noise. Everything proved below about the remainder holds for the
	solution associated with any lift in \(\mathcal{D}^{\alpha}\); we fix the canonical
	one for concreteness.
\end{remark}

\subsection{Adopted notion of mild solution}\label{subsec:mild}

In this subsection we take \(\theta=1\); the general case is reduced to it in
Section~\ref{sec:escaling} by a time change. We thus consider, on a fixed horizon
\(T>0\),
\begin{equation}\label{eq:Burgers_theta1}
	du=\partial_x^{2}u\,dt+f(u)\,dt+g(u)\,\partial_x u\,dt+\sigma\,dW(t),
	\qquad (t,x)\in[0,T]\times\mathbb{T},
\end{equation}
with initial datum \(u_0\). We first recall the notions of weak and mild solution
of~\cite{Hairer2011}, and then introduce the variant that we adopt, which separates
the contribution of the initial datum from the remainder.

\begin{definition}[Weak solution, \cite{Hairer2011}]\label{def:Hairer_weak}
	A continuous stochastic process \(u\) is a \emph{weak solution}
	of~\eqref{eq:Burgers_theta1} if \(\hat v:=u-\psi\) belongs to
	\(C\bigl([0,T];C\bigr)\cap L^{1}\bigl([0,T];C^{1}\bigr)\) and, for every smooth
	periodic test function \(\phi:\mathbb{T}\to\mathbb{R}^{n}\), the identity
	\begin{equation}\label{eq:weak_H}
		\begin{aligned}
			\langle\hat v_t,\phi\rangle
			&=\langle u_0-\psi_0,\phi\rangle
			+\int_0^t\bigl\langle(\partial_x^{2}-1)\phi,\hat v_s\bigr\rangle\,ds
			+\int_0^t\bigl\langle\phi,\,g(u_s)\,\partial_x\hat v_s\bigr\rangle\,ds\\
			&\quad+\int_0^t\!\int_0^{2\pi}\phi(x)\,g\bigl(\hat v_s(x)+\Psi_s(x)\bigr)\,d\Psi_s(x)\,ds
			+\int_0^t\bigl\langle\phi,\hat f(u_s)\bigr\rangle\,ds
		\end{aligned}
	\end{equation}
	holds almost surely for all \(t\in[0,T]\), where \(\hat f(u)=f(u)+u\) and
	\(\langle\cdot,\cdot\rangle\) is the \(L^{2}(\mathbb{T};\mathbb{R}^{n})\) pairing.
\end{definition}

The inner integral in the second line is the rough integral of
Theorem~\ref{thm:rough_integral}: since \(\hat v_s\in C^{1}\), the path
\(x\mapsto\phi(x)\,g(\hat v_s(x)+\Psi_s(x))\) is controlled by \(\Psi_s\) by the
constructions (ii) and (iii) of Section~\ref{subsec:rp}, and its integral is a
measurable function of \(s\) bounded in terms of \(\|\Psi_s\|_{C^\alpha}\),
\(\|\boldsymbol\Psi_s\|_{2\alpha}\) and \(\|\hat v_s\|_{C^{1}}\), so that the outer
integral makes sense (\cite[Remark~3.4]{Hairer2011}).

\begin{definition}[Mild solution, \cite{Hairer2011}]\label{def:Hairer_mild}
	A continuous stochastic process \(u\) is a \emph{mild solution}
	of~\eqref{eq:Burgers_theta1} if \(\hat v:=u-\psi\) belongs to
	\(C\bigl([0,T];C\bigr)\cap L^{1}\bigl([0,T];C^{1}\bigr)\) and, for every
	\(x\in\mathbb{T}\) and \(t\in(0,T]\), the identity
	\begin{equation}\label{eq:mild_H}
		\begin{aligned}
			\hat v_t(x)
			&=\bigl(S_t(u_0-\psi_0)\bigr)(x)
			+\int_0^t\Bigl(S_{t-s}\bigl(g(u_s)\,\partial_x\hat v_s+f(u_s)+\psi_s\bigr)\Bigr)(x)\,ds\\
			&\quad+\int_0^t\!\int_0^{2\pi}p_{t-s}(x-y)\,g\bigl(u_s(y)\bigr)\,d\Psi_s(y)\,ds
		\end{aligned}
	\end{equation}
	holds almost surely.
\end{definition}

\begin{proposition}[\cite{Hairer2011}, Proposition~3.6]\label{prop:weak_mild}
	Every mild solution is a weak solution and conversely.
\end{proposition}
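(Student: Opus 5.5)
The plan is to test both definitions against the spectral decomposition of the periodic heat semigroup, in the spirit of~\cite[Proposition~3.6]{Hairer2011}. First fix the meaning of the rough term uniformly in time. For \(\hat v\in C([0,T];C)\cap L^{1}([0,T];C^{1})\) and a.e.\ \(s\), the path \(y\mapsto g(\hat v_s(y)+\Psi_s(y))\) is controlled by \(\Psi_s\), with derivative process \(Dg(u_s)\). This follows from constructions (ii) and (iii) and Lemma~\ref{lem:rp_composition}, and its \(\|\cdot\|_{\Psi_s,\alpha}\) norm is bounded by a polynomial in \(\|\hat v_s\|_{C^{1}}\), \(\|\Psi_s\|_{\alpha}\) and \(\|\boldsymbol\Psi_s\|_{2\alpha}\). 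Multiplying by a smooth test function, or by the smooth kernel \(y\mapsto p_{t-s}(x-y)\) for \(s<t\), keeps it in \(C^{\alpha}_{\Psi_s}\) by~\eqref{eq:algebra}. So both rough integrals are defined as in Theorem~\ref{thm:rough_integral}, and are linear in the multiplying smooth function.

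\textbf{Mild \(\Rightarrow\) weak.} Pair~\eqref{eq:mild_H} with a Fourier mode \(e_k(x)=e^{ikx}\), or more generally with a smooth \(\phi\). Since \(S_{t-s}e_k=e^{-k^{2}(t-s)}e_k\) and the rough integral is linear in the smooth factor, the rough term gives
\(\int_0^t e^{-k^{2}(t-s)}\int_0^{2\pi}\bar e_k(y)g(u_s(y))\,d\Psi_s(y)\,ds\).
Exchanging the \(x\)-integration with the rough integral is justified because the Riemann sums~\eqref{eq:rough_riemman} converge uniformly in \(x\) with bound~\eqref{eq:rough_int_bound}. The Fourier coefficient \(a_k(t)=\langle\hat v_t,e_k\rangle\) is therefore a Duhamel formula for the ODE \(\dot a_k=-k^{2}a_k+F_k(t)\), where \(F_k\) is locally integrable in \(t\) by the bound above. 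Differentiating in the integrated sense gives~\eqref{eq:weak_H} for \(\phi=e_k\). The damping shift between \(\partial_x^{2}\) and \(\partial_x^{2}-1\) is accounted for by the terms \(\psi_s\) and \(\hat f=f+\mathrm{id}\), since \(-\hat v+u=\psi\). Finite linear combinations and density in \(C^{2}\) then give all smooth \(\phi\). The \(k\)-series converges because every term is continuous in \(\phi\) in a \(C^{2}\) norm.

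\textbf{Weak \(\Rightarrow\) mild.} Take \(\phi=e_k\) in~\eqref{eq:weak_H}. This shows that \(a_k\) is absolutely continuous and solves the linear ODE above. Variation of constants then recovers the Fourier coefficients of the right-hand side of~\eqref{eq:mild_H}. Two continuous functions on \(\mathbb{T}\) with equal Fourier coefficients coincide, so~\eqref{eq:mild_H} holds for all \(x\).

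\textbf{Main obstacle.} The delicate point is continuity in \(x\) of the right side of~\eqref{eq:mild_H}, together with integrability near \(s=t\) of the rough term. For this I would use Proposition~\ref{prop:hk_result} with \(\lambda=(t-s)^{-1/2}\) and \(f\) the rescaled Gaussian, which gives the bound \(C(t-s)^{\alpha/2-1/2}\|g(u_s)\|_{\Psi_s,\alpha}\). This is integrable only after using \(\hat v\in L^{1}(C^{1})\) and the polynomial bound above. If that polynomial bound is not integrable in \(s\), I would first localise in time on intervals where \(\|\hat v_s\|_{C^{1}}\) is controlled.
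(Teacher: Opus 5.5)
The paper gives no argument of its own here, only the citation to~\cite{Hairer2011}. Your Fourier-mode/variation-of-constants scheme is the standard route to such an equivalence. Its bookkeeping is fine: the damping shift via $u-\hat v=\psi$, pulling the $x$-integration through the compensated Riemann sums, and passing from trigonometric polynomials to smooth $\phi$. The gap is exactly at the step you call the main obstacle, and it has two parts.

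First, $\Lambda(s):=\|g(u_s)\|_{\Psi_s,\alpha}$ must be bounded \emph{linearly} in $\|\hat v_s\|_{C^1}$. With only $\hat v\in L^1([0,T];C^1)$, a polynomial bound does not even make your forcing $F_k$, or the rough term of~\eqref{eq:weak_H}, integrable in $s$. Applying Lemma~\ref{lem:rp_composition} to the controlled path $u_s=\psi_s+\hat v_s$ gives a quadratic bound. The linear one comes from the device in the proof of Lemma~\ref{lem:Lambda}: write $g(u_s)=\varphi_s(\psi_s)$ with $\varphi_s(y,x)=g(\hat v_s(x)+y)$. Then $\hat v_s$ enters only through $\|\varphi_s\|_{2\alpha;t}$, and $\Lambda(s)\le C(1+K_T)^2(1+\|\hat v_s\|_{C^1})$. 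Your fallback of localising to time intervals on which $\|\hat v_s\|_{C^1}$ is bounded does not work. An $L^1$ function can be unbounded on every interval, e.g.\ $\sum_n 2^{-n}|s-q_n|^{-1/2}$ with $(q_n)$ dense.

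Second, even with $\Lambda\in L^1$, the quantity $\int_0^t(t-s)^{(\alpha-1)/2}\Lambda(s)\,ds$ is finite only for almost every $t$, not for every $t$. (Its integral in $t$ over $[0,T]$ is at most $C\int_0^T\Lambda$.) So at a given $t$ you justify neither the Fubini exchange that computes the Fourier coefficients of the rough convolution, nor the continuity in $x$ of the right-hand side of~\eqref{eq:mild_H}.

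The repair is short. For mild $\Rightarrow$ weak, prove the Duhamel formula for $a_k(t)$ at a.e.\ $t$. Then extend it to all $t$ by continuity of both sides: $a_k$ is continuous because $\hat v\in C([0,T];C)$, and the Duhamel side because $F_k\in L^1$.

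For weak $\Rightarrow$ mild, set $B_s:=g(u_s)\partial_x\hat v_s+f(u_s)+\psi_s$ and $w_t:=\hat v_t-S_t(u_0-\psi_0)-\int_0^tS_{t-s}B_s\,ds\in C([0,T];C)$. The weak formulation at time $t-\varepsilon$ shows that $S_\varepsilon w_{t-\varepsilon}$ has the same Fourier coefficients as the truncated integral $\int_0^{t-\varepsilon}\!\int_0^{2\pi}p_{t-s}(\cdot-y)\,g(u_s(y))\,d\Psi_s(y)\,ds$. That integral has a non-singular kernel, so the Fubini exchanges are legitimate there, and the two continuous functions coincide. Since $S_\varepsilon w_{t-\varepsilon}\to w_t$ uniformly, the truncations converge to $w_t$ uniformly in $x$, at every $t$.

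The rough term of~\eqref{eq:mild_H} is an absolutely convergent integral equal to this limit whenever $\int_0^t(t-s)^{(\alpha-1)/2}\Lambda(s)\,ds<\infty$, and then~\eqref{eq:mild_H} holds literally. This covers every $t$ for the solutions of Theorem~\ref{theorem:Existence_Uq}, where $\|\hat v_s\|_{C^1}\le C(1+s^{(\beta-1)/2})$. At any remaining times the right-hand side of~\eqref{eq:mild_H} has to be read as this limit.
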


\begin{remark}[On the term \(\psi_s\)]\label{rem:compensating_term}
	The term \(\psi_s\) in~\eqref{eq:mild_H} is not a typographical accident. Since
	\(\psi\) solves the \emph{damped} equation~\eqref{eq:estacionario_H1}, subtracting it
	from~\eqref{eq:Burgers_theta1} leaves
	\[
	\partial_t\hat v=\partial_x^{2}\hat v+f(u)+g(u)\,\partial_x u+\psi ,
	\]
	the last term compensating the damping \(-\psi\) in~\eqref{eq:estacionario_H1};
	\eqref{eq:mild_H} is the mild form of this equation for the heat semigroup \(S_t\),
	the term \(g(u)\partial_x\psi\) being the rough integral. In~\cite{Hairer2011} the
	same compensation is written as \(\hat f(u)=f(u)+u\) together with the damped
	operator \(\partial_x^{2}-1\), as in~\eqref{eq:weak_H}; the two bookkeepings are
	equivalent, since \((\partial_x^{2}-1)\hat v+u=\partial_x^{2}\hat v+\psi\).
\end{remark}

We now introduce the notion of solution adopted throughout. It differs from
Definition~\ref{def:Hairer_mild} only in that the contribution of the initial datum
is subtracted as well, so that the remainder starts from zero; an analogous
formulation is used in~\cite[Section~3]{HairerMatetski2016} and, for a related
equation, in~\cite[Section~5]{HairerWeber2013}. For \(v\in C\bigl([0,T];C^{1}\bigr)\)
we write
\begin{equation}\label{eq:norm_1T}
	\|v\|_{1,T}:=\sup_{t\in[0,T]}\|v_t\|_{C^{1}} .
\end{equation}

\begin{definition}[Adopted mild solution]\label{def:mild_principal}
	Set \(U_t:=S_t(u_0-\psi_0)\) for \(t\in[0,T]\). A continuous stochastic process \(u\)
	is a \emph{mild solution} of~\eqref{eq:Burgers_theta1} if the function
	\[
	v:=u-\psi-U
	\]
	belongs to \(C\bigl([0,T];C^{1}\bigr)\), so that \(\|v\|_{1,T}<\infty\), and, for
	every \(x\in\mathbb{T}\) and \(t\in[0,T]\), the identity
	\begin{equation}\label{eq:mild_principal}
		\begin{aligned}
			v_t(x)
			&=\int_0^t\Bigl(S_{t-s}\bigl(g(u_s)\,(\partial_x v_s+\partial_x U_s)
			+f(u_s)+\psi_s\bigr)\Bigr)(x)\,ds\\
			&\quad+\int_0^t\!\int_0^{2\pi}p_{t-s}(x-y)\,g\bigl(u_s(y)\bigr)\,d\Psi_s(y)\,ds
		\end{aligned}
	\end{equation}
	holds almost surely. We denote the first integral on the right-hand side by
	\(\mathcal{M}^{(1)}_{\Psi}v\) and the second, the rough integral, by
	\(\mathcal{M}^{(2)}_{\Psi}v\), so that~\eqref{eq:mild_principal} reads
	\(v=\mathcal{M}^{(1)}_{\Psi}v+\mathcal{M}^{(2)}_{\Psi}v\).
\end{definition}

\begin{remark}\label{rem:why_this_definition}
	Definitions~\ref{def:Hairer_mild} and~\ref{def:mild_principal} are equivalent:
	\(\hat v=U+v\), and \(U\) satisfies the homogeneous heat equation, so
	\eqref{eq:mild_H} for \(\hat v\) and~\eqref{eq:mild_principal} for \(v\) differ by
	the identity \(S_t(u_0-\psi_0)=U_t\) together with
	\(\partial_x\hat v_s=\partial_x v_s+\partial_x U_s\). We adopt the second because it
	lets the initial-datum contribution \(U\), whose regularity degenerates as
	\(t\to0^{+}\), and the remainder \(v\), which is regular on all of \([0,T]\), be
	analysed independently in Section~\ref{sec:regularidad}.
\end{remark}

\begin{theorem}[Well-posedness, \cite{Hairer2011}]\label{theorem:Existence_Uq}
	Let Assumption~\ref{ass:standing} hold. Then for almost every realisation of the
	driving rough path \(\Psi\) there exists \(T_{\ast}>0\), depending on the realisation,
	such that equation~\eqref{eq:Burgers_theta1} has a unique mild solution on
	\([0,T_{\ast}]\) in the sense of Definition~\ref{def:mild_principal}, taking values
	in \(C\bigl([0,T_{\ast}];C^{\beta}\bigr)\). Under the boundedness hypotheses of
	Assumption~\ref{ass:standing}(A2) the solution is global: for every \(T>0\) it exists
	and is unique on \([0,T]\), with \(T\) independent of \(\Psi\).
\end{theorem}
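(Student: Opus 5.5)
The plan is a fixed-point argument for the map \(v\mapsto\mathcal{M}^{(1)}_{\Psi}v+\mathcal{M}^{(2)}_{\Psi}v\), carried out pathwise for a fixed realisation of \(\Psi\) in the full-measure set of Lemma~\ref{lem:rp_levantamiento}. The argument then has three parts: a contraction on a small interval, a priori bounds that extend the solution, and measurability of the result.

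\textbf{Local contraction.} Fix such a realisation and work in the ball \(B_K=\{v\in C([0,T_\ast];C^{1}):\|v\|_{1,T_\ast}\le K,\ v_0=0\}\).

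For \(\mathcal{M}^{(1)}_{\Psi}\), I would use the smoothing estimate~\eqref{eq:heat_holder_smoothing} with \(\eta=0\) and \(\kappa=1\). This gives
\[
\|\mathcal{M}^{(1)}_{\Psi}v_t\|_{C^{1}}\le C\int_0^t (t-s)^{-1/2}\bigl(\|g\|_\infty(K+\|\partial_xU_s\|_\infty)+\|f(u_s)\|_\infty+\|\psi_s\|_\infty\bigr)\,ds .
\]
The initial-datum term obeys \(\|\partial_x U_s\|_\infty\le Cs^{-(1-\beta)/2}\|u_0-\psi_0\|_{C^\beta}\), again by~\eqref{eq:heat_holder_smoothing}. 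The resulting Beta integral \(\int_0^t(t-s)^{-1/2}s^{-(1-\beta)/2}ds\sim t^{\beta/2}\) is small for small \(t\).

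For \(\mathcal{M}^{(2)}_{\Psi}\), note that \(y\mapsto g(u_s(y))=g(\psi_s+U_s+v_s)\) is controlled by \(\Psi_s\). Its derivative is \(Dg(u_s)\), and its norm is bounded by Lemma~\ref{lem:rp_composition} together with constructions (ii) and (iii). I would then apply Proposition~\ref{prop:hk_result} to \(\partial_x p_{t-s}(x-\cdot)\), rescaled via the heat-kernel scaling \(\lambda=(t-s)^{-1/2}\). This should yield \(\|\mathcal{M}^{(2)}_\Psi v_t\|_{C^1}\lesssim\int_0^t(t-s)^{-1+\alpha/2}(\cdots)\,ds\), which is integrable because \(\alpha>0\).

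Lipschitz differences in \(v\) are handled the same way. For the rough term, I would use the local Lipschitz continuity of composition in \(C^\alpha_X\), which needs \(g\in C^3\). This gives a contraction on \(B_K\) for \(T_\ast\) small.

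\textbf{Main obstacle.} The hardest step is the behaviour of \(U\) near \(t=0\). The controlled norm of \(g(u_s)\) involves \(\|U_s\|_{C^{2\alpha}}\), which blows up like \(s^{-(2\alpha-\beta)/2}\) when \(2\alpha>\beta\). I would first check that the combined singularity \((t-s)^{-1+\alpha/2}s^{-(2\alpha-\beta)/2}\) is still integrable. If it is not, I would fall back on a weighted norm \(\sup_t t^{\gamma}\|v_t\|_{C^1}\) for the contraction, which is the approach of~\cite{Hairer2011}.

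\textbf{Globalisation and measurability.} Because \(g\) and all derivatives of \(f\) and \(g\) are bounded, the estimates above grow at most linearly in \(\|v\|_{1,t}\). A Gronwall-type inequality with singular kernel then gives an a priori bound on \([0,T]\) depending only on \(\|\Psi\|\) over \([0,T]\). Since the local existence time depends only on that bound, the local solution can be restarted at \(T_\ast\) (with the new initial datum in \(C^\beta\)) and iterated to reach any \(T\). Uniqueness comes from the contraction on each step. Measurability of the solution as a function of the rough path follows from the continuity of the fixed point in \(\Psi\).
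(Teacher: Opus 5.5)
Your proposal is correct and takes the paper's route: the paper obtains the solution by exactly this Picard iteration on the fixed-point formulation~\eqref{eq:mild_principal}, deferring the estimates to \cite[Theorem~3.7]{Hairer2011}, and gets globality from the boundedness in (A2). Two points should be made explicit. First, the linear singularity \(s^{(\beta-2\alpha)/2}\) and the linear growth in \(\|v\|_{1,t}\) that you use come from the rewriting in Lemma~\ref{lem:Lambda}, which puts \(U_s+v_s\) in the second argument of Lemma~\ref{lem:rp_composition}; applied with \(Y=u_s\), the lemma squares both, and the resulting bound \(t^{\beta-3\alpha/2}\) blows up. Second, what closes the unweighted contraction is not mere integrability but the positive exponent \((\beta-\alpha)/2\) guaranteed by (A5), so the weighted fallback is never needed; it would in any case not give \(v\in C([0,T];C^{1})\), as Definition~\ref{def:mild_principal} requires.
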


\begin{proof}
	The solution is obtained by a Picard iteration on the fixed-point
	formulation~\eqref{eq:mild_principal}; see~\cite[Theorem~3.7]{Hairer2011}. Local
	existence requires only \(u_0\in C^{\beta}\) (and, by
	\cite[Remark~3.10]{Hairer2011}, only \(g\in C^{3}\) and \(f\in C^{1}\)); globality
	uses that \(g\) and the derivatives of \(f\) and \(g\) are bounded.
\end{proof}

Throughout the rest of the paper we use the global statement: \(T>0\) is the fixed
horizon of~\eqref{eq:Burgers_theta1}, and \(u\), \(v\), \(U\) are defined on all of
\([0,T]\).

\section{Regularity of the remainder}\label{sec:regularidad}

In this section we prove that the remainder \(R=U+v\) in the decomposition
\(u=\psi+U+v\) is strictly more regular than \(\psi\), in space and in time. This
is the analytic input of the estimation procedure: the variations of
Section~\ref{sec:variaciones} are computed for \(\psi\), and the results of this
section are what allow them to be transferred to \(u\) in
Section~\ref{sec:estimadores}.

Recall from Definition~\ref{def:mild_principal} that \(v\) solves the fixed-point
equation \(v=\mathcal{M}^{(1)}_{\Psi}v+\mathcal{M}^{(2)}_{\Psi}v\), where
\[
\bigl(\mathcal{M}^{(1)}_{\Psi}v\bigr)(t,x)
=\int_0^t\Bigl(S_{t-s}\bigl(g(u_s)(\partial_x v_s+\partial_x U_s)+f(u_s)+\psi_s\bigr)\Bigr)(x)\,ds
\]
is a Bochner integral of the heat semigroup applied to a bounded function, and
\[
\bigl(\mathcal{M}^{(2)}_{\Psi}v\bigr)(t,x)
=\int_0^t\!\int_0^{2\pi}p_{t-s}(x-y)\,g\bigl(u_s(y)\bigr)\,d\Psi_s(y)\,ds
\]
has as inner integral the rough integral of Theorem~\ref{thm:rough_integral}. The
two are treated by different means: the first by the Schauder
estimates~\eqref{eq:heat_holder_smoothing}--\eqref{eq:heat_approx}, the second by
Proposition~\ref{prop:hk_result}. The spatial regularity of \(R\) is immediate from
the solution concept and is recorded in Corollary~\ref{cor:R_properties} at the
end of the section; the substance of the section is the temporal regularity,
Proposition~\ref{prop:regularity_v}.

Throughout the section \(T>0\) is the fixed horizon of Section~\ref{subsec:mild},
and \(u\), \(v\), \(U\) are defined on all of \([0,T]\) by
Theorem~\ref{theorem:Existence_Uq}. We set
\begin{equation}\label{eq:KT}
	K_T:=\sup_{t\le T}\bigl(\|\Psi_t\|_{C^{\alpha}}+\|\boldsymbol{\Psi}_t\|_{2\alpha}\bigr),
\end{equation}
which is almost surely finite by Lemma~\ref{lem:rp_levantamiento}(iii). All the
constants below are allowed to depend on \(T\), on \(K_T\), on \(\|v\|_{1,T}\), on
\(f\) and \(g\) through the bounds of Assumption~\ref{ass:standing}(A2), and on the
exponents \(\alpha,\beta\); the dependence is indicated by subscripts when it is
relevant, and never involves the time \(t\) or the increment \(h\).

\subsection{Two preliminary estimates}\label{subsec:prelim_estimates}

Proposition~\ref{prop:hk_result} bounds the rough integral of \(f(\lambda\,\cdot)\,Y\)
against \(\Psi_s\) by \(\lambda^{-\alpha}\|f\|_{1,1}\|Y\|_{\Psi_s,\alpha}\). To apply
it to the heat kernel we need two things: that \(p_t\) and its derivatives have the
rescaled form \(f(\lambda x)\) with \(\lambda=t^{-1/2}\) — this is what produces the
gain \(t^{\alpha/2}\) from the concentration of the kernel — and that the functions
\(f\) so obtained have \(\|\cdot\|_{1,1}\)-norms bounded \emph{uniformly in \(t\)},
since otherwise the constant in Proposition~\ref{prop:hk_result} would depend on
\(t\) and could compensate the gain. The following lemma provides both.

\begin{lemma}\label{lem:heat_prop}
	Let \(T>0\). For every \(t\in(0,T]\) there exist functions
	\(f_t,g_t,h_t\in C^{\infty}_{c}(\mathbb{R})\) with uniformly bounded
	\(\|\cdot\|_{1,1}\)-norms,
	\[
	\sup_{t\in(0,T]}\|f_t\|_{1,1}<\infty,\qquad
	\sup_{t\in(0,T]}\|g_t\|_{1,1}<\infty,\qquad
	\sup_{t\in(0,T]}\|h_t\|_{1,1}<\infty,
	\]
	such that for every \(x\in[0,2\pi]\) the scaling identities
	\[
	p_t(x)=\frac{1}{\sqrt t}\,f_t\!\Bigl(\frac{x}{\sqrt t}\Bigr),\qquad
	\partial_x p_t(x)=\frac{1}{t}\,g_t\!\Bigl(\frac{x}{\sqrt t}\Bigr),\qquad
	\partial_x^{2}p_t(x)=\frac{1}{t\sqrt t}\,h_t\!\Bigl(\frac{x}{\sqrt t}\Bigr)
	\]
	hold.
\end{lemma}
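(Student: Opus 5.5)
The plan is to define the three functions explicitly by rescaling the periodic heat kernel and then cutting it off with a fixed smooth function. Fix \(\chi\in C^{\infty}_{c}(\mathbb{R})\) with \(0\le\chi\le1\), \(\chi\equiv1\) on \([-\tfrac{\pi}{2},\tfrac{5\pi}{2}]\) and \(\operatorname{supp}\chi\subset[-\pi,3\pi]\). For \(t\in(0,T]\) and \(z\in\mathbb{R}\) set
\[
f_t(z)=\chi(\sqrt t\,z)\,\sqrt t\,p_t(\sqrt t\,z),\qquad
g_t(z)=\chi(\sqrt t\,z)\,t\,(\partial_xp_t)(\sqrt t\,z),\qquad
h_t(z)=\chi(\sqrt t\,z)\,t^{3/2}(\partial_x^{2}p_t)(\sqrt t\,z).
\]
These functions are smooth and compactly supported. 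For \(x\in[0,2\pi]\) we have \(\chi(x)=1\), so the three scaling identities hold by construction. The only real content is therefore the uniform bound on the \(\|\cdot\|_{1,1}\)-norms. The cutoff is needed because the periodic kernel itself is not summable over the blocks \([2\pi m,2\pi m+2\pi]\).

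For the uniform bound, expand with the image sum. Writing \(c_k=2\pi k/\sqrt t\), we get
\[
\sqrt t\,p_t(\sqrt t z)=(4\pi)^{-1/2}\sum_{k\in\mathbb{Z}}e^{-(z-c_k)^{2}/4}.
\]
Similarly, \(t\,\partial_xp_t(\sqrt tz)\) and \(t^{3/2}\partial_x^2p_t(\sqrt tz)\) are the same sums with \(e^{-(z-c_k)^2/4}\) replaced by \(-\tfrac{z-c_k}{2}e^{-(z-c_k)^2/4}\) and by \(\bigl(\tfrac{(z-c_k)^2}{4}-\tfrac12\bigr)e^{-(z-c_k)^2/4}\). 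Each summand has the form \(q(z-c_k)\), where \(q\) is a fixed polynomial times a Gaussian. The key elementary fact is that for such \(q\),
\[
\sup_{c\in\mathbb{R}}\sum_{m\in\mathbb{Z}}\sup_{0\le s\le2\pi}\bigl(|q|+|q'|\bigr)(2\pi m+s-c)\le C_q<\infty,
\]
because the translated blocks tile \(\mathbb{R}\) and \(|q|+|q'|\) decays faster than any power.

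The next step is to control the terms after multiplying by the cutoff. On \(\operatorname{supp}\chi(\sqrt t\,\cdot)\subset[-\pi/\sqrt t,3\pi/\sqrt t]\), only \(k=0\) and \(k=1\) have centres \(c_k\) in or near the support. These two terms contribute at most \(2C_q\) each to the \(\|\cdot\|_{1,1}\)-norm, since \(|\chi|\le1\). For \(|k|\ge2\) (and \(k=-1\)), the distance \(|z-c_k|\) on the support is at least \(\pi/\sqrt t\cdot(|k|-1)\). The corresponding terms are therefore bounded by \(e^{-c(|k|-1)^2/t}\) times polynomial factors, uniformly in \(z\). Summing over \(k\) and over the \(O(1/\sqrt t)\) blocks meeting the support gives a bound \(C t^{-N}e^{-c/t}\), which is bounded on \((0,T]\).

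The derivative in the \(\|\cdot\|_{1,1}\)-norm also hits the cutoff and produces \(\sqrt t\,\chi'(\sqrt t z)\) times the kernel sums. The factor \(\sqrt t\le\sqrt T\) only helps, so this contribution is handled by the same two estimates.

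I expect the main obstacle to be just this bookkeeping. Uniformity must hold as \(t\to0\), where the support has length \(\sim t^{-1/2}\) and the second peak at \(c_1=2\pi/\sqrt t\) drifts to infinity. The translation-invariant block-sum bound for \(q\) is what removes any dependence on \(t\) there. For \(t\) bounded away from \(0\) all quantities depend continuously on \(t\in[t_0,T]\), so no additional issue arises.
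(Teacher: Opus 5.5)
Your proposal is correct and is essentially the paper's argument: there too \(f_t,g_t,h_t\) are cut-off periodised Gaussians \(\chi_t(z)\sum_k F^{(\ell)}(z-2\pi k/\sqrt t)\). The few images near the support are handled by the translation-invariant block-sum bound, and the remaining ones by a Gaussian tail multiplied by the \(O(t^{-1/2})\) blocks meeting the support. Your cut-off, equal to \(1\) on a neighbourhood of \([0,2\pi]\), even makes the identities hold on \([0,2\pi]\) directly, whereas the paper's cut-off equals \(1\) only on the rescaled \([-\pi,\pi]\).

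Fix one slip: for \(k=-1\) your lower bound \(\pi(|k|-1)/\sqrt t\) on the distance is \(0\), which as written only gives an \(O(t^{-1/2})\) bound for that term. In fact \(|z-c_k|\ge\pi(2|k|-1)/\sqrt t\ge\pi/\sqrt t\) on the support for every \(k\le-1\); alternatively, treat \(k=-1\) together with \(k=0,1\) via the block bound.
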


\begin{proof}
	Deferred to Appendix~\ref{appendix:A}.
\end{proof}

The second ingredient is a bound on the size of the integrand \(g(u_s)\) of
\(\mathcal{M}^{(2)}_{\Psi}v\) as a controlled path. For fixed \(s\), the norm
\(\|\cdot\|_{\Psi_s,\alpha}\) is the controlled-path norm of
Definition~\ref{def:controlled} taken with respect to the rough path
\(X=\Psi_s\), i.e.\ \(\psi_s\) regarded as a path in the spatial variable together
with its canonical lift \(\boldsymbol{\Psi}_s\). The path \(u_s=\psi_s+U_s+v_s\) is
controlled by \(\Psi_s\) with derivative process \(I\): the summand \(\psi_s\) by
construction (i) of Section~\ref{subsec:rp}, and \(U_s+v_s\in C^{1}\subset C^{2\alpha}\)
by the embedding \(\iota\) of construction (ii). By construction (iii), \(g(u_s)\)
is then controlled by \(\Psi_s\) with derivative process \(Dg(u_s)\), and, following
the convention of Section~\ref{subsec:rp}, we write \(\|g(u_s)\|_{\Psi_s,\alpha}\)
for the norm of the pair \(\bigl(g(u_s),Dg(u_s)\bigr)\). The next lemma bounds it in
terms of \(K_T\), \(\|v\|_{1,T}\) and \(s\); the dependence on \(s\) comes entirely
from \(U_s\), whose \(C^{2\alpha}\)-norm degenerates as \(s\to0^{+}\).

\begin{lemma}\label{lem:Lambda}
	For \(s\in(0,T]\) set \(\Lambda(s):=\|g(u_s)\|_{\Psi_s,\alpha}\). Then there is an
	almost surely finite constant \(C\) such that
	\begin{equation}\label{eq:Lambda_bound}
		\Lambda(s)\;\le\;C\bigl(1+s^{\frac{\beta-2\alpha}{2}}\bigr),\qquad s\in(0,T],
	\end{equation}
	and consequently
	\begin{equation}\label{eq:Lambda_props}
		\int_0^T\Lambda(s)\,ds<\infty
		\qquad\text{and}\qquad
		\sup_{s\in[a,T]}\Lambda(s)<\infty\ \text{ for every } a>0 .
	\end{equation}
\end{lemma}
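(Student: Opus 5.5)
The plan is to combine the composition estimate of Lemma~\ref{lem:rp_composition} with a bound on \(\|u_s\|_{\Psi_s,\alpha}\), and to isolate all of the \(s\)-dependence in the \(C^{2\alpha}\)-norm of \(U_s\).

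\emph{Applying the composition lemma.} I would take \(\phi(y,x)=g(y)\), which has no explicit dependence on the spatial variable, so \(\|\phi\|_{2\alpha;t}=0\). By (A2), \(\|g\|_\infty+\|D^2g\|_\infty<\infty\). Lemma~\ref{lem:rp_composition} then gives
\[
\Lambda(s)\le C_g\bigl(1+\|u_s\|_{\Psi_s,\alpha}\bigr)^2 .
\]

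\emph{Bounding the controlled norm of \(u_s\).} Write \(u_s=\psi_s+(U_s+v_s)\), with derivative process \(I\). The controlled remainder is then \(\bar R_{x,y}=\delta(U_s+v_s)_{x,y}\), because the \(\psi_s\) part contributes exactly \(I\,\delta\Psi_s\). Reading off the three pieces of the norm:
\begin{itemize}
\item \(\|u_s\|_{C^\alpha}\le K_T+\|U_s\|_{C^\alpha}+\|v_s\|_{C^1}\);
\item \(\|I\|_{C^\alpha}=\sqrt n\) is a constant;
\item \(\|\bar R\|_{2\alpha}\le \|U_s\|_{C^{2\alpha}}+C\|v_s\|_{C^1}\).
\end{itemize}
All terms except those involving \(U_s\) are bounded by \(K_T\) and \(\|v\|_{1,T}\).

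\emph{Controlling \(U_s\).} Use \eqref{eq:heat_holder_smoothing} with \(\eta=\beta\) and \(\kappa=2\alpha>\beta\). This gives
\[
\|U_s\|_{C^{2\alpha}}\le C s^{-(2\alpha-\beta)/2}\|u_0-\psi_0\|_{C^\beta},
\]
and \(\|u_0-\psi_0\|_{C^\beta}\) is almost surely finite by (A4) and the remark after Assumption~\ref{ass:standing}. Also \(\|U_s\|_{C^\alpha}\le C\|u_0-\psi_0\|_{C^\beta}\), by \eqref{eq:heat_contrac} together with the fact that \(S_s\) commutes with translations, so that Hölder seminorms are contracted. Hence
\[
\|u_s\|_{\Psi_s,\alpha}\le C\bigl(1+s^{(\beta-2\alpha)/2}\bigr).
\]

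\emph{The main obstacle.} Inserting this into the composition bound as it stands produces the square, i.e.\ a factor \(s^{\beta-2\alpha}\) rather than the claimed \(s^{(\beta-2\alpha)/2}\). I expect this to be the only delicate point. The remedy is not to use Lemma~\ref{lem:rp_composition} as a black box, but to redo the composition estimate for this specific structure, tracking where the quadratic term arises.

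The quadratic term comes from the Taylor remainder of the composition,
\[
g(u_y)-g(u_x)-Dg(u_x)\,\delta u_{x,y}=O\bigl(|\delta u_{x,y}|^2\bigr).
\]
Its \(2\alpha\)-norm is bounded by \(\|D^2g\|_\infty\|u_s\|_\alpha^2\), and \(\|u_s\|_\alpha\) involves only \(\|U_s\|_{C^\alpha}\), which is bounded uniformly in \(s\). The linear term \(Dg(u_x)\bar R_{x,y}\) carries \(\|U_s\|_{C^{2\alpha}}\) only to the first power. Likewise, \(\|Dg(u_s)\|_{C^\alpha}\le \|D^2g\|_\infty\|u_s\|_\alpha\) is bounded uniformly. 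Together these give \eqref{eq:Lambda_bound}.

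\emph{Consequences.} Since \(\tfrac13<\alpha<\beta<\tfrac12\), the exponent satisfies
\[
\frac{\beta-2\alpha}{2}>\frac{\beta-1}{2}>-1,
\]
so the bound is integrable on \((0,T]\). It is bounded on \([a,T]\) for every \(a>0\). This yields \eqref{eq:Lambda_props}.
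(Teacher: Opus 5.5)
Your argument is correct, but it reaches the linear dependence on $s^{(\beta-2\alpha)/2}$ by a different route from the paper.

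The paper keeps Lemma~\ref{lem:rp_composition} as a black box and changes \emph{what} is composed. It sets $\varphi_s(y,x):=g\bigl(v_s(x)+U_s(x)+y\bigr)$ and writes $g(u_s)=\varphi_s(\psi_s)$ for the controlled path $(\psi_s,I)$. That path has norm bounded by $C(1+K_T)$ with no further $s$-dependence. The whole degeneration at $s=0$ is thereby moved into $\|\varphi_s\|_{2\alpha;t}\le C\bigl(1+\|v_s+U_s\|_{C^{2\alpha}}\bigr)$, which enters the lemma only linearly.

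You instead compose the trivial $\phi=g$ with $(u_s,I)$ and observe that the lemma as stated would square $\|\bar R\|_{2\alpha}\le C s^{(\beta-2\alpha)/2}$. You then reopen its proof. Splitting the remainder of $g(u_s)$ into the Taylor term $g(u_y)-g(u_x)-Dg(u_x)\,\delta u_{x,y}$ and the term $Dg(u_x)\bar R_{x,y}$ shows that the quadratic part only sees $\|u_s\|_\alpha$. You also correctly supply the input that makes this uniform in $s$: $\|U_s\|_{C^\alpha}\le C\|u_0-\psi_0\|_{C^\beta}$, because $S_s$ does not increase H\"older seminorms and $\beta>\alpha$.

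The paper's version is shorter, at the price of needing the parametric form of the composition lemma, with uniform $C^{2\alpha}$ dependence in the second argument. Yours uses only the simplest composition and makes explicit why the bound is linear. With $Y'=I$, the genuinely quadratic contribution in Lemma~\ref{lem:rp_composition} involves $\|Y\|_\alpha$ alone, not $\|\bar R\|_{2\alpha}$, so the factor $(1+\|Y\|_{X,\alpha})^{2}$ is lossy here.

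Incidentally, the naive squared bound $C(1+s^{\beta-2\alpha})$ would already give \eqref{eq:Lambda_props}, since $\beta-2\alpha>-1$. The refinement is needed only for the exponent stated in \eqref{eq:Lambda_bound}, which is what enters Remark~\ref{rem:global_vs_local} via Appendix~\ref{appendix:B}.
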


\begin{proof}
	Fix \(s\in(0,T]\) and define
	\[
	\varphi_s(y,x):=g\bigl(v_s(x)+U_s(x)+y\bigr),\qquad y\in\mathbb{R}^{n},\ x\in[0,2\pi].
	\]
	Then \(\varphi_s(\psi_s(x),x)=g(u_s(x))\) for every \(x\), that is,
	\(g(u_s)=\varphi_s(\psi_s)\) in the sense of construction (iii) of
	Section~\ref{subsec:rp}, applied to the controlled path \((\psi_s,I)\). The point of
	this rewriting is that all the dependence on \(s\) has been moved into the
	\emph{second} argument of \(\varphi_s\), where Lemma~\ref{lem:rp_composition} is
	linear, whereas the path \((\psi_s,I)\) that enters quadratically carries no
	\(s\)-dependence beyond \(K_T\).
	
	We verify the hypotheses of Lemma~\ref{lem:rp_composition}. Uniform \(C^{2}\)
	regularity in the first variable follows from the boundedness of \(g\), \(Dg\) and
	\(D^{2}g\) in Assumption~\ref{ass:standing}(A2). For the Hölder regularity in the
	second variable, fix \(y\in\mathbb{R}^{n}\); since \(Dg\) is bounded and
	\(v_s+U_s\in C^{2\alpha}\),
	\[
	\|g(v_s+U_s+y)\|_{C^{2\alpha}}
	\le\|g\|_{C^{1}}\bigl(1+\|v_s+U_s\|_{C^{2\alpha}}\bigr),
	\]
	and, using \(\|v_s\|_{C^{2\alpha}}\le\|v_s\|_{C^{1}}\le\|v\|_{1,T}\) together with
	the Schauder estimate~\eqref{eq:heat_holder_smoothing} applied to
	\(U_s=S_s(u_0-\psi_0)\) with \(u_0-\psi_0\in C^{\beta}\),
	\[
	\|v_s+U_s\|_{C^{2\alpha}}
	\le\|v\|_{1,T}+C\,s^{\frac{\beta-2\alpha}{2}} .
	\]
	Hence, writing \(\|\varphi_s\|_{2\alpha;t}:=\sup_{y}\|\varphi_s(y,\cdot)\|_{C^{2\alpha}}\)
	as in Lemma~\ref{lem:rp_composition},
	\[
	\|\varphi_s\|_{2\alpha;t}\le C_{v}\bigl(1+s^{\frac{\beta-2\alpha}{2}}\bigr).
	\]
	On the other hand the controlled path \((\psi_s,I)\in C^{\alpha}_{\Psi_s}\) has
	vanishing remainder, so that \(\|(\psi_s,I)\|_{\Psi_s,\alpha}\le C(1+K_T)\).
	By Lemma~\ref{lem:rp_composition},
	\[
	\|\varphi_s(\psi_s)\|_{\Psi_s,\alpha}
	\le C\bigl(\|D_y^{2}\varphi_s\|_{\infty}+\|\varphi_s\|_{\infty}+\|\varphi_s\|_{2\alpha;t}\bigr)
	\bigl(1+\|(\psi_s,I)\|_{\Psi_s,\alpha}\bigr)^{2}
	\le C_{v}\bigl(1+s^{\frac{\beta-2\alpha}{2}}\bigr)(1+K_T)^{2},
	\]
	and absorbing \((1+K_T)^{2}\) into the constant gives~\eqref{eq:Lambda_bound}.
	For~\eqref{eq:Lambda_props}, 
   
   note that \(\beta-2\alpha>-1/2\) since \(1/3<\alpha <\beta<1/2\)  by Assumption~\ref{ass:standing}
   , so
	\(s\mapsto s^{(\beta-2\alpha)/2}\) is integrable at the origin, the exponent
	being greater than   \(-1/4\) ; on \([a,T]\) it is bounded, the exponent being
	negative.
\end{proof}

\begin{remark}\label{rem:unbounded_derivates_g}
	If \(g\) and its derivatives are not globally bounded, one can localise by replacing
	\(\varphi_s\) with \(\varphi_s(y,x):=g\bigl(v_s(x)+U_s(x)+y\bigr)\chi_s(y)\), where
	\(\chi_s\) is a smooth cut-off with \(\chi_s\equiv1\) on a neighbourhood of the range
	of \(\psi_s\). Since \(\chi_s(\psi_s(x))=1\), the identity
	\(\varphi_s(\psi_s(x),x)=g(u_s(x))\) is preserved, while all \(y\)-derivatives of
	\(\varphi_s\) become bounded. The conclusion of Lemma~\ref{lem:Lambda} therefore
	holds without global boundedness of \(g\), the constant now depending on the range
	of \(\psi_s\) as well.
\end{remark}

\subsection{Temporal regularity of \texorpdfstring{$v$}{v}}\label{subsec:temporal_reg}

The exponent to beat is \(1/4\): the temporal quartic variation of
Section~\ref{sec:variaciones} is calibrated to a temporal Hölder exponent of
\(1/4\) for \(\psi\), and any perturbation of strictly higher regularity leaves it
unchanged. We obtain much more.

\begin{proposition}[Temporal Hölder regularity of \(v\)]\label{prop:regularity_v}
	Let \([T_1,T_2]\subset(0,T]\) with \(T_1>0\). Then, almost surely, for every fixed
	\(x\in\mathbb{T}\) the trajectory \(t\mapsto v(t,x)\) belongs to
	\(C^{\rho}([T_1,T_2])\) for every \(\rho<\tfrac{1+\alpha}{2}\). In particular, since
	\(\alpha>\tfrac13\), one may take \(\rho>\tfrac12\).
\end{proposition}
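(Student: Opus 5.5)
We work from the fixed-point identity \(v=\mathcal{M}^{(1)}_{\Psi}v+\mathcal{M}^{(2)}_{\Psi}v\) and estimate the time increments of each term separately. Fix \(x\in\mathbb{T}\), \(T_1\le t<t+h\le T_2\), and write both terms in the generic Duhamel form \(\int_0^t F_{t-s}(s)\,ds\). Here \(F_r(s)=(S_r\Phi_s)(x)\), with \(\Phi_s=g(u_s)(\partial_x v_s+\partial_xU_s)+f(u_s)+\psi_s\), for \(\mathcal{M}^{(1)}\). For \(\mathcal{M}^{(2)}\), \(F_r(s)=\int p_r(x-y)g(u_s(y))\,d\Psi_s(y)\). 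The increment then splits in the standard way as
\[
\int_t^{t+h}F_{t+h-s}(s)\,ds+\int_0^t\bigl(F_{t+h-s}(s)-F_{t-s}(s)\bigr)\,ds=:I_1+I_2 .
\]
We write the kernel difference as \(p_{r+h}-p_r=\int_r^{r+h}\partial_\tau p_\tau\,d\tau=\int_r^{r+h}\partial_x^2p_\tau\,d\tau\), and interpolate it with the crude bound by \(|p_{r+h}|+|p_r|\).

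\textbf{The Bochner term.} Since \(U_s\) is smooth for \(s>0\) with \(\|\partial_xU_s\|_\infty\le Cs^{(\beta-1)/2}\) (integrable) and \(v\in C([0,T];C^1)\), we get \(\|\Phi_s\|_\infty\le C(1+s^{(\beta-1)/2})\). For \(I_1\), the contraction \eqref{eq:heat_contrac} gives a bound \(Ch\) on \([T_1,T_2]\). For \(I_2\), we use \(\|(S_{r+h}-S_r)\Phi\|_\infty\le C\min(1,h/r)\|\Phi\|_\infty\), which follows from \eqref{eq:heat_holder_smoothing} with \(\kappa=2\) and \(\eta=0\). Integrating gives \(Ch\log(1/h)\) up to the weight in \(s\), which is harmless. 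Hence \(\mathcal{M}^{(1)}\) is \(\rho\)-Hölder for every \(\rho<1\).

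\textbf{The rough term (main obstacle).} Here we apply Proposition~\ref{prop:hk_result} with \(\lambda=r^{-1/2}\), combined with the scaling identities of Lemma~\ref{lem:heat_prop}. This yields
\[
|F_r(s)|\le C\,r^{-\frac12}r^{\frac\alpha2}\Lambda(s),\qquad
\Bigl|\int\partial_x^2p_\tau(x-y)g(u_s)\,d\Psi_s\Bigr|\le C\,\tau^{-\frac32+\frac\alpha2}\Lambda(s).
\]
Two technical points need care. First, \(p_t\) is periodic, so its rescaled profiles must be viewed on the torus; the \(\|\cdot\|_{1,1}\) norm over \(m\in\mathbb{Z}\) is designed for this, and we would translate \(y\mapsto x-y\) accordingly. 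Second, \(\Lambda\) is singular at \(0\) but, by \eqref{eq:Lambda_props}, it is integrable there and bounded on \([a,T]\).

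For \(I_1\), the first bound gives \(\int_t^{t+h}(t+h-s)^{(\alpha-1)/2}\Lambda(s)\,ds\le C h^{(1+\alpha)/2}\), using \(\sup_{[T_1/2,T]}\Lambda<\infty\). For \(I_2\), we split the \(s\)-integral according to whether \(r=t-s\) satisfies \(r\le h\) or \(r>h\). On \(\{r\le h\}\) we use the crude bound, which again gives \(Ch^{(1+\alpha)/2}\). On \(\{r>h\}\) we use the derivative bound \(h\,r^{-3/2+\alpha/2}\) and integrate over \(r\in(h,t)\). Since \(-\tfrac32+\tfrac\alpha2<-1\), this integral is \(\lesssim h\cdot h^{-1/2+\alpha/2}=h^{(1+\alpha)/2}\). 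The part with \(s\) near \(0\) (\(s<T_1/2\)) has \(r\ge T_1/2\), so the kernel is smooth there and \(\int\Lambda<\infty\) gives \(Ch\).

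Collecting the pieces gives \(|v(t+h,x)-v(t,x)|\le Ch^{(1+\alpha)/2}\), possibly up to a logarithm, hence \(C^\rho\) regularity for every \(\rho<(1+\alpha)/2\). The constants are uniform in \(x\) and finite almost surely. The delicate step is the rough term: we must verify that Proposition~\ref{prop:hk_result} applies to the torus-periodised rescaled kernels uniformly in \(\tau\) and \(s\), and check that the rough integral commutes with \(\int d\tau\) in the kernel-difference representation. For the latter, it suffices to note that the rough integral is linear and continuous in the smooth weight, by the Riemann-sum definition \eqref{eq:rough_riemman}. Finally, \(\alpha>1/3\) gives \((1+\alpha)/2>2/3>1/2\).
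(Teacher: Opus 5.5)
Your proof is correct. For the rough term \(\mathcal{M}^{(2)}_{\Psi}v\) it is essentially the paper's argument:
\begin{itemize}
\item you split the increment into a tail piece and a past piece, the latter written through \(\partial_\tau p_\tau=\partial_x^2p_\tau\);
\item both pieces are bounded by Proposition~\ref{prop:hk_result} together with the scalings of Lemma~\ref{lem:heat_prop};
\item the \(s\)-integral is split at \(T_1/2\), so that near \(s=0\) only the integrability of \(\Lambda\) is needed.
\end{itemize}
Your extra split at \(r\le h\) is harmless. The paper instead integrates \((t-s+\tau)^{\alpha/2-3/2}\) over the whole range at once, which is the quantity \(\IA\) of Appendix~\ref{appendix:B}. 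Your justification for commuting the rough integral with \(d\tau\), namely linearity and continuity in the smooth weight, is a clean substitute for the paper's appeal to rough Fubini.

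One detail you skip, which the paper treats as its ``regular range'': Proposition~\ref{prop:hk_result} requires \(\lambda=\tau^{-1/2}\ge1\). On \(\{r>h,\ s\ge T_1/2\}\) one can have \(\tau>1\) as soon as \(T_2>1+T_1/2\). There the kernel is smooth, so Theorem~\ref{thm:rough_integral} with \eqref{eq:algebra} gives the bound \(C\Lambda(s)\le C_T\,\tau^{-3/2+\alpha/2}\Lambda(s)\). Your estimate therefore survives unchanged.

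The genuine difference is in the Bochner term \(\mathcal{M}^{(1)}_{\Psi}v\).
\begin{itemize}
\item The paper first proves the spatial bound \(\sup_{t\in(0,T]}\|\mathcal{M}^{(1)}_{\Psi}v_t\|_{C^{1+\gamma}}<\infty\). It then writes the increment as \((S_h-I)\mathcal{M}^{(1)}_{\Psi}v_t\) plus a tail, and uses \eqref{eq:heat_approx} with \(\kappa=1+\gamma\) to get \(h^{(1+\gamma)/2}\).
\item You bound the kernel difference directly, \(\|(S_{r+h}-S_r)\Phi\|_\infty\le C\min(1,h/r)\|\Phi\|_\infty\), via \eqref{eq:heat_holder_smoothing} with \(\kappa=2\), \(\eta=0\). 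This gives \(h\log(1/h)\).
\end{itemize}
Your route is shorter and sharper for this term: it allows any exponent below \(1\), so the overall exponent is exactly \((1+\alpha)/2\), capped only by the rough term. As you set it up, however, it is local to \([T_1,T_2]\), since you use \(s\ge T_1\) in the tail and \(t-s\ge t/2\) away from the diagonal. The paper's route additionally yields a spatial \(C^{1+\gamma}\) bound and a temporal bound for \(\mathcal{M}^{(1)}_{\Psi}v\) that is uniform down to \(t=0\). Since the restriction \(T_1>0\) is forced anyway by the rough term, nothing is lost for the proposition.
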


\begin{remark}\label{rem:global_vs_local}
	The restriction to a compact subinterval of \((0,T]\) is the only regime used in
	Section~\ref{sec:estimadores}, and it is what allows the weight \(\Lambda\) of
	Lemma~\ref{lem:Lambda} to be dispensed with in Step~3 below. A statement valid on all
	of \([0,T]\) can be obtained by carrying \(\Lambda\) through the change of
	variables, at the cost of the smaller exponent \((1+\beta-\alpha)/2\), which still
	exceeds \(1/4\) since \(\beta>\alpha\); see Appendix~\ref{appendix:B}.
\end{remark}

\begin{remark}\label{rem:unbounded_all}
	By Remark~\ref{rem:unbounded_derivates_g}, Proposition~\ref{prop:regularity_v} also
	holds when \(g\) and the derivatives of \(f\) and \(g\) are not globally bounded,
	on any interval on which the solution exists, provided the constants are allowed to
	depend on \(f\), \(g\) and their derivatives on the range of \(u\).
\end{remark}

\begin{proof}[Proof of Proposition~\ref{prop:regularity_v}]
	Fix \(\gamma\in[0,\alpha)\); we shall prove Hölder continuity of order
	\(\rho=(1+\gamma)/2\), which gives the claim since \(\gamma\) is arbitrary. We
	treat the two summands of \(v=\mathcal{M}^{(1)}_{\Psi}v+\mathcal{M}^{(2)}_{\Psi}v\)
	separately, and use throughout that \(u\in C([0,T];C^{\beta})\) by
	Theorem~\ref{theorem:Existence_Uq}, so that \(\sup_{s\le T}\|u_s\|_{\infty}<\infty\)
	and, \(f\) being \(C^{\infty}\), there is a constant \(C_f\) with
	\(\|f(u_s)\|_{\infty}\le C_f\) for all \(s\in[0,T]\).
	
	\medskip
	\step{1} \textbf{Spatial regularity of \(\mathcal{M}^{(1)}_{\Psi}v\).}
	We show that \(\bigl(\mathcal{M}^{(1)}_{\Psi}v\bigr)_t\in C^{1+\gamma}\), uniformly
	in \(t\in(0,T]\). By~\eqref{eq:heat_holder_smoothing} with \(\eta=0\),
	\(\kappa=1+\gamma\), and the bound \(\|U_s\|_{C^{1}}\le Cs^{\frac{\beta-1}{2}}\)
	(again from~\eqref{eq:heat_holder_smoothing}),
	\begin{align*}
		\|\mathcal{M}^{(1)}_{\Psi}v_t\|_{C^{1+\gamma}}
		&\le C\int_0^t(t-s)^{-\frac{1+\gamma}{2}}
		\bigl\|g(u_s)(\partial_x v_s+\partial_x U_s)+f(u_s)+\psi_s\bigr\|_{\infty}\,ds\\
		&\le C\int_0^t(t-s)^{-\frac{1+\gamma}{2}}
		\Bigl(\|g\|_{\infty}\bigl(\|v_s\|_{C^{1}}+\|U_s\|_{C^{1}}\bigr)+C_f+K_T\Bigr)ds\\
		&\le C_{f,K_T,v}\Bigl(\int_0^t(t-s)^{-\frac{1+\gamma}{2}}ds
		+\int_0^t(t-s)^{-\frac{1+\gamma}{2}}s^{\frac{\beta-1}{2}}ds\Bigr),
	\end{align*}
	where the subscript \(v\) records dependence on \(\|v\|_{1,T}\). The first integral
	equals \(\frac{2}{1-\gamma}t^{\frac{1-\gamma}{2}}\). For the second, the substitution
	\(s=tz\) gives
	\[
	\int_0^t(t-s)^{-\frac{1+\gamma}{2}}s^{\frac{\beta-1}{2}}ds
	=t^{\frac{\beta-\gamma}{2}}\,\Beta\!\Bigl(\tfrac{1-\gamma}{2},\tfrac{\beta+1}{2}\Bigr),
	\]
	finite because both arguments of the Euler Beta function are positive. Since
	\(\gamma<\alpha<\beta\), both exponents of \(t\) are positive and we conclude
	\begin{equation}\label{eq:M1_spatial}
		\sup_{t\in(0,T]}\|\mathcal{M}^{(1)}_{\Psi}v_t\|_{C^{1+\gamma}}\le C_{f,K_T,T,v}.
	\end{equation}
	
	\medskip
	\step{2} \textbf{Temporal regularity of \(\mathcal{M}^{(1)}_{\Psi}v\).}
	Fix \(x\in\mathbb{T}\), \(t\in[T_1,T_2]\) and \(0<h<1\) with \(t+h\le T\). By the
	semigroup property,
	Writing \(F_s:=g(u_s)(\partial_x v_s+\partial_x U_s)+f(u_s)+\psi_s\) for the
	integrand of \(\mathcal{M}^{(1)}_{\Psi}v\), we have
	\begin{align*}
		\bigl|(\mathcal{M}^{(1)}_{\Psi}v)(t+h,x)-(\mathcal{M}^{(1)}_{\Psi}v)(t,x)\bigr|
		&\le\bigl|(S_h-I)\bigl(\mathcal{M}^{(1)}_{\Psi}v_t\bigr)(x)\bigr|
		+\int_t^{t+h}\bigl\|S_{t+h-s}F_s\bigr\|_{\infty}\,ds .
	\end{align*}
	For the first term, the spatial regularity~\eqref{eq:M1_spatial} allows us to
	apply~\eqref{eq:heat_approx} with \(\kappa=1+\gamma\):
	\[
	\bigl|(S_h-I)\bigl(\mathcal{M}^{(1)}_{\Psi}v_t\bigr)(x)\bigr|
	\le Ch^{\frac{1+\gamma}{2}}\|\mathcal{M}^{(1)}_{\Psi}v_t\|_{C^{1+\gamma}}
	\le C_{f,K_T,T,v}\,h^{\frac{1+\gamma}{2}} .
	\]
	For the second, the contraction~\eqref{eq:heat_contrac}, the bound
	\(\|U_s\|_{C^{1}}\le Cs^{\frac{\beta-1}{2}}\) and the concavity of
	\(r\mapsto r^{\frac{\beta+1}{2}}\) give
	\[
	\begin{aligned}
	\int_t^{t+h}\|S_{t+h-s}F_s\|_{\infty}\,ds
	&\le C_{f,K_T,v}\int_t^{t+h}\bigl(1+s^{\frac{\beta-1}{2}}\bigr)ds
	=C_{f,K_T,v}\Bigl(h+(t+h)^{\frac{\beta+1}{2}}-t^{\frac{\beta+1}{2}}\Bigr)\\
	&\le C_{f,K_T,v}\,h^{\frac{\beta+1}{2}} .
	\end{aligned}
	\]
	Since \(\frac{1+\gamma}{2}<\frac{1+\beta}{2}\) and \(h<1\), the map
	\(t\mapsto(\mathcal{M}^{(1)}_{\Psi}v)(t,x)\) is Hölder continuous of order
	\(\frac{1+\gamma}{2}\).
	
	\medskip
	\step{3} \textbf{Temporal regularity of \(\mathcal{M}^{(2)}_{\Psi}v\).}
	Combining the fundamental theorem of calculus with the fact that \(p\) solves the
	heat equation, and applying the rough Fubini theorem
	(see~\cite{FrizHairer2020}; for a similar use cf.~\cite{HairerWeber2013}), we
	obtain
	\begin{align}
		&\bigl|(\mathcal{M}^{(2)}_{\Psi}v)(t+h,x)-(\mathcal{M}^{(2)}_{\Psi}v)(t,x)\bigr|\notag\\
		&\qquad\le\int_0^t\!\int_0^h\Bigl|\int_0^{2\pi}\partial_x^{2}p_{t-s+\tau}(x-y)\,g(u_s(y))\,d\Psi_s(y)\Bigr|d\tau\,ds
		\label{eq:int_past}\\
		&\qquad\quad+\int_t^{t+h}\Bigl|\int_0^{2\pi}p_{t+h-s}(x-y)\,g(u_s(y))\,d\Psi_s(y)\Bigr|ds
		=:\mathcal{I}_{\mathrm{past}}+\mathcal{I}_{\mathrm{tail}} .
		\label{eq:int_tail}
	\end{align}
	
	Proposition~\ref{prop:hk_result} applies only for \(\lambda\ge1\); with
	\(\lambda=(t-s+\tau)^{-1/2}\) this means \(t-s+\tau\le1\). We accordingly split
	\(\mathcal{I}_{\mathrm{past}}=\mathcal{I}_{\mathrm{sing}}+\mathcal{I}_{\mathrm{reg}}\)
	according to whether \(t-s+\tau\le1\) or \(>1\); on the second range the kernel is
	smooth and Theorem~\ref{thm:rough_integral} is used directly.
	
	\emph{The singular range.} By Lemma~\ref{lem:heat_prop},
	\(\partial_x^{2}p_r(x)=r^{-3/2}h_r(x/\sqrt r)\) with \(\sup_{r\le1}\|h_r\|_{1,1}<\infty\),
	so Proposition~\ref{prop:hk_result} with \(\lambda=r^{-1/2}\) and the controlled path
	\(g(u_s)\) gives, with \(\Lambda\) as in Lemma~\ref{lem:Lambda},
	\[
	\mathcal{I}_{\mathrm{sing}}
	\le C_{K_T}\int_0^t\!\int_0^h(t-s+\tau)^{\frac{\alpha}{2}-\frac32}\,\Lambda(s)\,d\tau\,ds .
	\]
	We split the outer integral at \(s=T_1/2\). For \(s\le T_1/2\) we have
	\(t-s+\tau\ge T_1/2\), so the kernel is bounded by \((T_1/2)^{\frac{\alpha}{2}-\frac32}\)
	and, by the integrability of \(\Lambda\) in~\eqref{eq:Lambda_props},
	\[
	\int_0^{T_1/2}\!\!\int_0^h(t-s+\tau)^{\frac{\alpha}{2}-\frac32}\Lambda(s)\,d\tau\,ds
	\le C_{T_1}\,h\int_0^{T_1/2}\Lambda(s)\,ds\le C_{T_1}\,h .
	\]
	For \(s\ge T_1/2\) we have \(\Lambda(s)\le\sup_{[T_1/2,T]}\Lambda<\infty\)
	by~\eqref{eq:Lambda_props}, whence
	\[
	\int_{T_1/2}^{t}\!\!\int_0^h(t-s+\tau)^{\frac{\alpha}{2}-\frac32}\Lambda(s)\,d\tau\,ds
	\le C_{T_1}\int_0^t\!\int_0^h(t-s+\tau)^{\frac{\alpha}{2}-\frac32}\,d\tau\,ds .
	\]
	The substitution \(s=hx\), \(\tau=hy\), \(t=hz\) contributes \(h^{2}\) from
	\(ds\,d\tau\) and \(h^{\frac{\alpha}{2}-\frac32}\) from the kernel, giving
	\[
	\int_0^t\!\int_0^h(t-s+\tau)^{\frac{\alpha}{2}-\frac32}\,d\tau\,ds
	=h^{\frac{1+\alpha}{2}}\,\IA(z),\qquad
	\IA(z)=\int_0^z\!\int_0^1(z-x+y)^{\frac{\alpha}{2}-\frac32}\,dy\,dx ,
	\]
	and \(\IA\) is bounded uniformly in \(z>0\) by Appendix~\ref{appendix:B}. Since
	\(\frac{1+\alpha}{2}<1\) and \(h<1\),
	\[
	\mathcal{I}_{\mathrm{sing}}\le C_{K_T,T_1}\bigl(h+h^{\frac{1+\alpha}{2}}\bigr)
	\le C_{K_T,T_1}\,h^{\frac{1+\alpha}{2}} .
	\]
	
	\emph{The regular range.} For \(r=t-s+\tau>1\), the bound~\eqref{eq:rough_int_bound}
	of Theorem~\ref{thm:rough_integral} together with the algebra
	property~\eqref{eq:algebra} give
	\[
	\Bigl|\int_0^{2\pi}\partial_x^{2}p_r(x-y)\,g(u_s(y))\,d\Psi_s(y)\Bigr|
	\le C_{K_T}\,\|\partial_x^{2}p_r\|_{C^{1}([0,2\pi])}\,\Lambda(s)
	\le C_{K_T,T}\,\Lambda(s),
	\]
	the last step by Lemma~\ref{lem:heat_prop}, since \(\|\partial_x^{2}p_r\|_{C^{1}}\le C\|h_r\|_{1,1}\).
	Hence, by~\eqref{eq:Lambda_props},
	\[
	\mathcal{I}_{\mathrm{reg}}\le C_{K_T,T}\int_0^t\!\int_0^h\Lambda(s)\,d\tau\,ds
	=C_{K_T,T}\,h\int_0^t\Lambda(s)\,ds\le C_{v,K_T,T}\,h .
	\]
	
	\emph{The tail.} In \(\mathcal{I}_{\mathrm{tail}}\) we have \(0<t+h-s<h<1\), so
	Lemma~\ref{lem:heat_prop} and Proposition~\ref{prop:hk_result} apply with
	\(p_r(x)=r^{-1/2}f_r(x/\sqrt r)\), giving
	\[
	\mathcal{I}_{\mathrm{tail}}\le C_{K_T}\int_t^{t+h}(t+h-s)^{\frac{\alpha}{2}-\frac12}\Lambda(s)\,ds .
	\]
	Here \(s\ge t\ge T_1\), so \(\Lambda(s)\le C_{T_1}\) by~\eqref{eq:Lambda_props}, and
	the substitution \(r=t+h-s\) yields
	\[
	\mathcal{I}_{\mathrm{tail}}\le C_{K_T,T_1}\int_0^h r^{\frac{\alpha-1}{2}}dr
	=\frac{2C_{K_T,T_1}}{\alpha+1}\,h^{\frac{1+\alpha}{2}},
	\]
	the integral converging because \(\frac{\alpha-1}{2}>-1\).
	
	\medskip
	\noindent\textbf{Conclusion.} Collecting the three bounds and using \(h<1\),
	\[
	\bigl|(\mathcal{M}^{(2)}_{\Psi}v)(t+h,x)-(\mathcal{M}^{(2)}_{\Psi}v)(t,x)\bigr|
	\le C_{v,K_T,T,T_1}\,h^{\frac{1+\alpha}{2}} ,
	\]
	and together with Step~2,
	\[
	|v(t+h,x)-v(t,x)|\le C_{v,f,K_T,T,T_1}\,h^{\rho},
	\qquad
	\rho=\min\Bigl\{\tfrac{1+\gamma}{2},\tfrac{1+\alpha}{2}\Bigr\}=\tfrac{1+\gamma}{2},
	\]
	which is the claim.
\end{proof}

\begin{remark}\label{remark:sobre_lambda}
	For the rough component \(\mathcal{M}^{(2)}_{\Psi}v_t\), \(t>0\) fixed, finiteness
	of the \(C^{1+\gamma}\) spatial norm also requires \(\gamma<\alpha\), but the bound
	is not uniform in \(t\in(0,T]\): it carries a weight of order \(t^{\gamma/2}\)
	which blows up as \(t\to0^{+}\); see \cite[Remark~3.6]{HairerMatetski2016}
	and~\cite[Corollary~3.16]{Hairer2013}. This is why the improved exponent
	\(1+\gamma\) is used only for \(\mathcal{M}^{(1)}_{\Psi}v\), where it is uniform
	(Step~1), while for \(\mathcal{M}^{(2)}_{\Psi}v\) we rely on \(C^{1}\) regularity
	in space, which is all that is needed below.
\end{remark}

\subsection{The initial condition component}\label{subsec:initial_data}

The term \(U_t=S_t(u_0-\psi_0)\) needs no separate argument away from the origin.
For any fixed \(t>0\) the map \(x\mapsto U_t(x)\) is \(C^{\infty}\) on the torus,
and for any fixed \(x\) the map \(t\mapsto U_t(x)\) is \(C^{\infty}\) on every compact
subinterval \([T_1,T_2]\subset(0,T]\), with
\[
\|\partial_t U_t\|_{\infty}=\|\partial_x^{2}S_t(u_0-\psi_0)\|_{\infty}
\le C\,t^{\frac{\beta-2}{2}}\|u_0-\psi_0\|_{C^{\beta}}\le C_{T_1}
\qquad\text{for } t\ge T_1,
\]
by~\eqref{eq:heat_holder_smoothing}. In particular \(U\) is Lipschitz in time on
\([T_1,T_2]\), which is better than the exponent of
Proposition~\ref{prop:regularity_v}.

This smoothness degenerates as \(t\to0^{+}\). By~\eqref{eq:heat_holder_smoothing},
for \(\kappa>\beta\),
\[
\|U_t\|_{C^{\kappa}}\le C\,t^{\frac{\beta-\kappa}{2}}\|u_0-\psi_0\|_{C^{\beta}},
\]
so the best spatial regularity that holds uniformly on \([0,T]\) is \(\beta\), the
regularity of the datum itself, and the corresponding uniform temporal exponent is
\(\beta/2<1/4\).

\begin{remark}\label{rem:initial}
	Were the variations of Section~\ref{sec:estimadores} computed over an interval
	containing \(t=0\), the term \(U\) would be no smoother there than \(\psi\), and the
	initial singularity would contaminate the limits. We avoid this by working on
	intervals \([T_1,T_2]\) with \(T_1>0\), which is all that is needed; the stronger
	hypothesis \(u_0-\psi_0\in C^{1/2+\varepsilon}\) would be required only to include
	the origin, and is not assumed here.
\end{remark}

We collect the conclusions in the form in which Section~\ref{sec:estimadores} will
use them; nothing about the rough-path construction is needed beyond this point.

\begin{corollary}\label{cor:R_properties}
	Almost surely, the remainder \(R=U+v\) satisfies:
	\begin{enumerate}
		\item[(R1)] for every \(t\in(0,T]\), the map \(x\mapsto R(t,x)\) is Lipschitz on
		\(\mathbb{T}\);
		\item[(R2)] for every \([T_1,T_2]\subset(0,T]\) with \(T_1>0\) and every
		\(x\in\mathbb{T}\), the map \(t\mapsto R(t,x)\) is \(\rho\)-Hölder on \([T_1,T_2]\)
		for some \(\rho>\tfrac12\).
	\end{enumerate}
\end{corollary}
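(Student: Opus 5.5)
The corollary is an assembly of what has been proved for the two summands $U$ and $v$ separately. In each item we bound the two summands independently and add the bounds. Throughout we work on the full-measure event on which Theorem~\ref{theorem:Existence_Uq} and Lemma~\ref{lem:rp_levantamiento} hold, so that $K_T<\infty$, $\|v\|_{1,T}<\infty$ and $u_0-\psi_0\in C^{\beta}$.

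For (R1), fix $t\in(0,T]$. By Definition~\ref{def:mild_principal}, $v\in C([0,T];C^{1})$, so $\|\partial_x v_t\|_\infty\le\|v\|_{1,T}$. Hence $x\mapsto v_t(x)$ is Lipschitz on $\mathbb{T}$, with constant $\|v\|_{1,T}$; the mean value theorem applies because $v_t$ is $C^1$ and periodic. For $U_t=S_t(u_0-\psi_0)$ we apply the Schauder estimate~\eqref{eq:heat_holder_smoothing} with $\eta=\beta$ and $\kappa=1$. This gives $\|U_t\|_{C^1}\le C\,t^{\frac{\beta-1}{2}}\|u_0-\psi_0\|_{C^\beta}$, which is finite for each $t>0$. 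Therefore $R_t=U_t+v_t$ is Lipschitz. The Lipschitz constant is not uniform as $t\to0^{+}$, but (R1) is a statement for each fixed $t$, so uniformity is not needed.

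For (R2), fix $[T_1,T_2]\subset(0,T]$ with $T_1>0$, and fix $x$.
\begin{itemize}
\item For $v$, Proposition~\ref{prop:regularity_v} shows that $t\mapsto v(t,x)$ lies in $C^{\rho}([T_1,T_2])$ for every $\rho<\frac{1+\alpha}{2}$. Since $\alpha>\frac13$ by (A5), we have $\frac{1+\alpha}{2}>\frac23>\frac12$, so we may choose some $\rho\in\bigl(\frac12,\frac{1+\alpha}{2}\bigr)$.
\item For $U$, the computation of Section~\ref{subsec:initial_data} applies. It uses $\partial_tU_t=\partial_x^2U_t$ together with~\eqref{eq:heat_holder_smoothing}, and gives $\sup_{t\in[T_1,T_2]}\|\partial_tU_t\|_\infty\le C_{T_1}$. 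So $U(\cdot,x)$ is Lipschitz on $[T_1,T_2]$. A Lipschitz function on a bounded interval is $\rho$-Hölder there, with constant at most $C_{T_1}(T_2-T_1)^{1-\rho}$.
\end{itemize}
Adding the two bounds, $R(\cdot,x)$ is $\rho$-Hölder on $[T_1,T_2]$ with this same $\rho>\frac12$.

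The only step needing care is that the exceptional null set does not depend on $x$ or on the interval. Proposition~\ref{prop:regularity_v} is stated as ``almost surely, for every fixed $x$''. Its constants depend only on $K_T$, $\|v\|_{1,T}$, $T_1$, $f$ and $g$, and not on $x$, so a single null set works for all $x$. Intervals can be handled by taking $T_1=1/m$ with $m\in\mathbb{N}$ and $T_2=T$: this is a countable family of events, and any compact $[T_1,T_2]\subset(0,T]$ is contained in one of these intervals.
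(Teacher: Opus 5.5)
Your proof is correct and follows the paper's own argument. For (R1) you use $v_t\in C^{1}$ from Definition~\ref{def:mild_principal} together with the smoothing of $U_t$ for $t>0$. For (R2) you use Proposition~\ref{prop:regularity_v} for $v$ and the Lipschitz-in-time bound for $U$ on $[T_1,T_2]$ from Section~\ref{subsec:initial_data}. Your bookkeeping of the exceptional null set (constants independent of $x$, and a countable family of intervals) is a small precision that the paper leaves implicit.
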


\begin{proof}
	(R1): \(v_t\in C^{1}\) by Definition~\ref{def:mild_principal}, and
	\(U_t\in C^{\infty}\) for \(t>0\). (R2): Proposition~\ref{prop:regularity_v} for
	\(v\), and the Lipschitz bound for \(U\) of Section~\ref{subsec:initial_data}.
\end{proof}

The thresholds that matter are \(1\) for (R1) and \(1/4\) for (R2): the spatial
quadratic variation is insensitive to any spatially Lipschitz perturbation, and the
temporal quartic variation to any perturbation that is \(\rho\)-Hölder in time with
\(\rho>1/4\). Corollary~\ref{cor:R_properties} clears both with room to spare.

\section{Variations of the stationary process \texorpdfstring{$\psi$}{psi}}\label{sec:variaciones}

In this section we determine the exact asymptotic behaviour of the quadratic and
quartic variations of the stationary solution \(\psi\). Our approach for the
\(L^{2}\) convergence follows ideas analogous to those of~\cite{PospisilTribe2007},
while the almost sure convergence is obtained using arguments inspired by
Section~4 of~\cite{AssaadTudor2021}.

Convergence is first established in \(L^{2}(\Omega)\) by a direct analysis of the
covariance structure; subsequently, the hypercontractivity property of the Wiener
chaos together with the moment bounds obtained in the \(L^{2}\) step lift these
limits to almost sure convergence. The almost sure results are essential for our
purposes: in the rough path framework the solution is defined trajectory by
trajectory, and the estimators constructed in Section~\ref{sec:estimadores} rely on
pathwise limits that hold for almost every realisation of the driving process
\(\Psi\).

The variation of order \(2\) in space and of order \(4\) in time is not an arbitrary
choice: \(\psi\) is \(\eta\)-Hölder in space for every \(\eta<\tfrac12\) and
\(\kappa\)-Hölder in time for every \(\kappa<\tfrac14\), and the order of the
variation is calibrated to the regularity, \(2\cdot\tfrac12=4\cdot\tfrac14=1\).

\subsection{The stochastic heat equation with damping}

Consider the stationary solution \(\psi\) of~\eqref{eq:estacionario_H}, given by
\[
\psi(t,x)=\sigma\int_{-\infty}^{t}\!\int_{\mathbb{T}}p^{E}_{t-s}(x-y)\,W(ds,dy),
\qquad (t,x)\in\mathbb{R}\times\mathbb{T}.
\]
In what follows we set \(\sigma=1\); the general case is recovered by a simple
scaling argument (see Remark~\ref{rem:scaling_variations}).

\begin{convention}\label{conv:component}
	Throughout Sections~\ref{sec:variaciones}--\ref{sec:estimadores} we work with a
	single, arbitrarily fixed component of the solution. By
	Lemma~\ref{lem:psi_decoupling} the components \(\psi^{1},\dots,\psi^{n}\) are
	independent copies of one scalar centred Gaussian field; we fix an index \(i\),
	suppress it from the notation, and write \(\psi\), \(u_\theta\) and \(R\) for
	\(\psi^{i}\), \(u^{i}_{\theta}\) and \(R^{i}\) respectively. All quantities appearing
	in the variations below are therefore real-valued, and \((\cdot)^{2}\), \((\cdot)^{4}\)
	denote powers of real numbers, not Euclidean norms on \(\mathbb{R}^{n}\).
	Accordingly, the estimators of Section~\ref{sec:estimadores} are built from a
	single observed component of the solution. To avoid any confusion with the
	dimension \(n\), the number of intervals of a partition is denoted by \(N\).
\end{convention}

\begin{remark}\label{not:increments}
	We follow Remark~\ref{not:psi_t}: increments along a partition are written in the
	two-argument form, \(\psi(t,x_{j})-\psi(t,x_{j-1})\) for a spatial partition at a
	fixed time \(t\), and \(\psi(t_{j},x)-\psi(t_{j-1},x)\) for a temporal partition at
	a fixed point \(x\). For a mesh \(\delta>0\) we write
	\[
	\Delta(x,\delta):=\psi(t,x+\delta)-\psi(t,x),
	\qquad
	\Gamma(s,\delta):=\psi(s+\delta,x)-\psi(s,x),
	\]
	for the spatial increment at fixed \(t\) and the temporal increment at fixed \(x\);
	the variable held fixed is suppressed from the notation.
\end{remark}

\begin{remark}[Why one component suffices, and what it costs]\label{rem:one_component}
	The coupling between the components of \(u\) enters only through the nonlinearity,
	hence only through the remainder \(R=U+v\); the Gaussian part decouples by
	Lemma~\ref{lem:psi_decoupling}. Since \(|R^{i}(t,x)-R^{i}(t,y)|\le|R(t,x)-R(t,y)|\)
	for every \(i\), each component of the remainder inherits the regularity
	established in Section~\ref{sec:regularidad}, so the arguments of
	Section~\ref{sec:estimadores} apply verbatim to \(u^{i}=\psi^{i}+R^{i}\).
	
	Two remarks are in order. First, it is the independence of the \(W^{i}\) that is
	doing the work here: for correlated noise components, or for a non-diagonal
	diffusivity, the reduction would fail. Second, using a single component discards
	information. Working with the Euclidean norm \(|\cdot|\) on \(\mathbb{R}^{n}\)
	instead, one finds
	\[
	\sum_{j=1}^{N}\bigl|\Delta(x_{j-1},\delta)\bigr|^{2}
	\longrightarrow\frac{n\,\sigma^{2}}{2\theta}(A_2-A_1),
	\qquad
	\sum_{j=1}^{N}\bigl|\Gamma(t_{j-1},\delta)\bigr|^{4}
	\longrightarrow\frac{n(n+2)\,\sigma^{4}}{\pi\theta}(T_2-T_1),
	\]
	the second constant coming from \(\mathbb{E}|Z|^{4}=n(n+2)\tau^{4}\) for
	\(Z\sim\mathcal N(0,\tau^{2}I_{n})\); the first limit follows at once by
	independence, the second requires expanding \(|\Gamma|^{4}=(\sum_i\Gamma_i^{2})^{2}\)
	and repeating the moment estimates of this section with the cross terms
	\(\Gamma_i^{2}\Gamma_j^{2}\). This reduces the variance of the estimators by a
	factor of order \(n\), which is immaterial for the strong consistency proved here
	but would matter for a central limit theorem; see Remark~\ref{rem:no_clt}.
\end{remark}

The covariance structure is derived from the semigroup property and the
isometry~\eqref{eq:wn_cov}. Changing variable \(v=t+s-2r\) (\(dr=-dv/2\)) we
obtain
\[
\begin{aligned}
	\mathbb{E}[\psi(t,x)\psi(s,y)]
	&=\int_{-\infty}^{t\wedge s}\!\int_{\mathbb{T}}p^{E}_{t-r}(x-u)\,p^{E}_{s-r}(y-u)\,du\,dr\\
	&=\int_{-\infty}^{t\wedge s}p^{E}_{t+s-2r}(x-y)\,dr
	=\frac12\int_{|t-s|}^{\infty}e^{-r}p_r(x-y)\,dr\\
	&=\frac{1}{4\pi}\sum_{k\in\mathbb{Z}}\frac{e^{-(k^{2}+1)|t-s|}e^{ik(x-y)}}{k^{2}+1}
	=\frac{1}{4\pi}\sum_{k\in\mathbb{Z}}\frac{e^{-(k^{2}+1)|t-s|}\cos(k(x-y))}{k^{2}+1},
\end{aligned}
\]
which is~\eqref{eq:psi_cov} with \(\sigma=1\). When \(t=s\) this reduces to
\[
\mathbb{E}[\psi(t,x)\psi(t,y)]
=\frac{1}{4\pi}\sum_{k\in\mathbb{Z}}\frac{\cos(k(x-y))}{k^{2}+1}
=\frac14\,\frac{\cosh(|x-y|-\pi)}{\sinh\pi},
\qquad |x-y|\le\pi,
\]
and when \(x=y\) to
\[
\mathbb{E}[\psi(t,x)\psi(s,x)]
=\frac{1}{4\pi}\sum_{k\in\mathbb{Z}}\frac{e^{-(k^{2}+1)|t-s|}}{k^{2}+1}
=\frac{1}{4\pi}\,\Upsilon(|t-s|),
\]
where the auxiliary function \(\Upsilon\) is introduced in
Appendix~\ref{appendix:C}.

\subsection{Spatial quadratic variation}

Fix \(t>0\) and an interval \([A_1,A_2]\subset\mathbb{T}\), so that
\(0<A_2-A_1\le2\pi\). We study the quadratic variation of the spatial trajectory
\(x\mapsto\psi(t,x)\) over \([A_1,A_2]\). From the covariance computed above, the
spatial covariance at equal times is
\[
C(x,y):=\mathbb{E}[\psi(t,x)\psi(t,y)]
=\frac14\,\frac{\cosh(|x-y|-\pi)}{\sinh\pi},
\qquad |x-y|\le\pi,
\]
extended by \(2\pi\)-periodicity; in particular \(C(x,x)=\tfrac14\coth\pi\). Note
that \(z\mapsto\cosh(|z|-\pi)\) is smooth except at \(z=0\), where its derivative
jumps.

Using the explicit formula for \(C\) and expanding in powers of \(\delta\), we obtain
\begin{equation}\label{eq:var_inc}
	\mathbb{E}[\Delta^{2}(x,\delta)]=2\bigl(C(x,x)-C(x,x+\delta)\bigr)
	=\tfrac12\,\delta+O(\delta^{2}).
\end{equation}
Let now \(x<y\) with \(x+\delta\le y\), and set \(\kappa=y-x\ge\delta\). Then
\[
\begin{aligned}
	\mathbb{E}[\Delta(x,\delta)\Delta(y,\delta)]
	&=2C(\kappa)-C(\kappa-\delta)-C(\kappa+\delta)\\
	&=\frac{1}{2}\frac{\cosh(\kappa-\pi)}{\sinh\pi}
	-\frac14\,\frac{\cosh(\kappa-\pi-\delta)+\cosh(\kappa-\pi+\delta)}{\sinh\pi}
	=O(\delta^{2}),
\end{aligned}
\]
the last estimate following from the Taylor expansion
\(\cosh(z-\delta)+\cosh(z+\delta)=2\cosh z+\cosh z\,\delta^{2}+O(\delta^{4})\) with
\(z=\kappa-\pi\), which is legitimate because the arguments \(\kappa\pm\delta\)
stay in \([0,2\pi]\), away from the kink at \(0\) modulo \(2\pi\). By Wick's theorem
for Gaussian fields (see Section~2 in~\cite{PospisilTribe2007}) we obtain the moment
bounds
\begin{align}
	\mathbb{E}[\Delta^{4}(x,\delta)]&=O(\delta^{2}),\label{eq:fourth_inc}\\
	\mathbb{E}[\Delta^{2}(x,\delta)\Delta^{2}(y,\delta)]&=\tfrac14\delta^{2}+O(\delta^{3}).\label{eq:sq_cross}
\end{align}

\begin{proposition}[Spatial quadratic variation]\label{prop:L2_spatial_conv}
	Fix \(A_1<A_2\) in \(\mathbb{T}\) and \(t>0\). For \(j=0,1,\dots,N\) define the
	partition
	\[
	x_j=A_1+j\delta,\qquad \delta=\frac{A_2-A_1}{N}.
	\]
	Then, in \(L^{2}(\Omega)\),
	\[
	\lim_{N\to\infty}\sum_{j=1}^{N}\bigl(\psi(t,x_j)-\psi(t,x_{j-1})\bigr)^{2}
	=\tfrac12\,(A_2-A_1).
	\]
\end{proposition}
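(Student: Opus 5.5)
We prove the \(L^{2}(\Omega)\) convergence by the standard bias--variance decomposition. Write \(V_N:=\sum_{j=1}^{N}\Delta^{2}(x_{j-1},\delta)\), with \(\Delta\) as in Remark~\ref{not:increments} and \(\delta=(A_2-A_1)/N\). Then
\[
\mathbb{E}\Bigl[\bigl(V_N-\tfrac12(A_2-A_1)\bigr)^{2}\Bigr]
=\operatorname{Var}(V_N)+\Bigl(\mathbb{E}V_N-\tfrac12(A_2-A_1)\Bigr)^{2},
\]
and it suffices to show that both terms tend to zero.

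\textbf{Bias.} By stationarity and~\eqref{eq:var_inc}, \(\mathbb{E}V_N=N\bigl(\tfrac12\delta+O(\delta^{2})\bigr)=\tfrac12(A_2-A_1)+O(\delta)\). This tends to \(\tfrac12(A_2-A_1)\) because \(N\delta=A_2-A_1\) is fixed.

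\textbf{Variance.} Rather than using~\eqref{eq:sq_cross} directly, I would go through Wick's theorem. For centred jointly Gaussian variables, \(\operatorname{Cov}(\Delta_j^{2},\Delta_k^{2})=2\bigl(\mathbb{E}[\Delta_j\Delta_k]\bigr)^{2}\), so
\[
\operatorname{Var}(V_N)=2\sum_{j,k=1}^{N}\bigl(\mathbb{E}[\Delta(x_{j-1},\delta)\Delta(x_{k-1},\delta)]\bigr)^{2}.
\]
We split this sum into two parts.

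\emph{Diagonal terms.} Each is \(O(\delta^{2})\) by~\eqref{eq:var_inc}, so their total is \(N\cdot O(\delta^{2})=O(\delta)\).

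\emph{Off-diagonal terms.} For \(j\neq k\) the increments are over disjoint adjacent-or-separated intervals, and the displayed computation preceding~\eqref{eq:fourth_inc} gives \(\mathbb{E}[\Delta\Delta]=O(\delta^{2})\). Each squared covariance is then \(O(\delta^{4})\), and there are at most \(N^{2}\) such pairs, so their total is \(O(N^{2}\delta^{4})=O(\delta^{2})\). Hence \(\operatorname{Var}(V_N)=O(\delta)\to0\).

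\textbf{Main obstacle: uniformity of the off-diagonal bound.} The delicate point is that the \(O(\delta^{2})\) bound must be uniform in the separation \(\kappa=|x_{k-1}-x_{j-1}|\in[\delta,A_2-A_1]\), including \(\kappa=\delta\) and, when \(A_2-A_1\) is close to \(2\pi\), \(\kappa+\delta\) near \(2\pi\).

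\emph{Case \(\kappa\ge\delta\).} I would avoid a Taylor expansion here and write the second difference exactly. With \(C(z)=\tfrac14\cosh(z-\pi)/\sinh\pi\) on \([0,2\pi]\), which is smooth there, and \(\kappa\pm\delta\in[0,2\pi]\),
\[
2C(\kappa)-C(\kappa-\delta)-C(\kappa+\delta)=-\tfrac14\,\frac{\cosh(\kappa-\pi)\,2(\cosh\delta-1)}{\sinh\pi}.
\]
Its absolute value is at most \(\tfrac14\coth\pi\cdot\delta^{2}(1+o(1))\), uniformly in \(\kappa\). The argument only requires that both \(\kappa-\delta\) and \(\kappa+\delta\) lie in the single smooth branch \([0,2\pi]\) of \(C\). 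This holds because \(\kappa\ge\delta\) and \(\kappa+\delta\le A_2-A_1\le 2\pi\), with the endpoint \(2\pi\) identified with \(0\) by periodicity, where the formula remains valid by evenness of \(\cosh(\cdot-\pi)\) about \(\pi\).

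\emph{Case \(\kappa=0\).} This is the diagonal, where the kink contributes the \(\tfrac12\delta\) of~\eqref{eq:var_inc}, already handled above.

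Combining the bias and variance bounds yields the \(L^{2}(\Omega)\) limit \(\tfrac12(A_2-A_1)\).
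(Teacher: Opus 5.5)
Your proof is correct and is essentially the paper's argument. The paper expands \(\mathbb{E}\bigl[(\sum_{j}\Delta^{2}(x_{j-1},\delta)-\tfrac12(A_2-A_1))^{2}\bigr]\) directly into diagonal and off-diagonal sums via \eqref{eq:fourth_inc}--\eqref{eq:sq_cross}, and those are themselves Wick consequences of the same covariance estimates, so your bias--variance split with \(\operatorname{Cov}(\Delta_j^{2},\Delta_k^{2})=2(\mathbb{E}[\Delta_j\Delta_k])^{2}\) is the same computation reorganised. Your exact identity \(\cosh(z-\delta)+\cosh(z+\delta)=2\cosh z\cosh\delta\) makes the uniformity in \(\kappa\) explicit and gives a slightly sharper off-diagonal bound (\(O(\delta^{2})\) rather than the paper's \(O(\delta)\)), but the overall rate \(O(\delta)\) and the structure of the argument are unchanged.
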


\begin{proof}
	Let \(c=1/2\). Convergence in \(L^{2}(\Omega)\) is equivalent to
	\[
	\lim_{N\to\infty}\sum_{j=1}^{N}\sum_{k=1}^{N}
	\mathbb{E}\Bigl[\bigl(\Delta^{2}(x_{j-1},\delta)-c\delta\bigr)
	\bigl(\Delta^{2}(x_{k-1},\delta)-c\delta\bigr)\Bigr]=0 .
	\]
	
	\noindent\textbf{Diagonal terms (\(j=k\)).} We expand
	\[
	\mathbb{E}\bigl[(\Delta^{2}(x_{j-1},\delta)-c\delta)^{2}\bigr]
	=\mathbb{E}[\Delta^{4}(x_{j-1},\delta)]-2c\delta\,\mathbb{E}[\Delta^{2}(x_{j-1},\delta)]+c^{2}\delta^{2}.
	\]
	Using \(\mathbb{E}[\Delta^{4}(x_{j-1},\delta)]=O(\delta^{2})\) and
	\(\mathbb{E}[\Delta^{2}(x_{j-1},\delta)]=c\delta+O(\delta^{2})\), we obtain
	\[
	\sum_{j=1}^{N}\mathbb{E}\bigl[(\Delta^{2}(x_{j-1},\delta)-c\delta)^{2}\bigr]
	\le N\,O(\delta^{2})=O(\delta)\to0 .
	\]
	
	\noindent\textbf{Off-diagonal terms (\(j\neq k\)).} By symmetry it suffices to
	bound
	\[
	2\sum_{1\le j<k\le N}
	\mathbb{E}\Bigl[\bigl(\Delta^{2}(x_{j-1},\delta)-c\delta\bigr)
	\bigl(\Delta^{2}(x_{k-1},\delta)-c\delta\bigr)\Bigr].
	\]
	Since \(\mathbb{E}[\Delta^{2}(x_{j-1},\delta)\Delta^{2}(x_{k-1},\delta)]=c^{2}\delta^{2}+O(\delta^{3})\)
	by~\eqref{eq:sq_cross}, and the mixed terms cancel the \(c^{2}\delta^{2}\)
	contribution, each summand is \(O(\delta^{3})\). Hence the total off-diagonal
	contribution is bounded by \(N^{2}O(\delta^{3})=O(\delta)\to0\).
\end{proof}

\medskip
\noindent\textbf{Almost sure convergence.}
We now strengthen the \(L^{2}\) convergence of Proposition~\ref{prop:L2_spatial_conv}
and show that the limit holds almost surely. The argument relies on the
hypercontractivity property of the Wiener chaos, Markov's inequality and the
Borel--Cantelli lemma; an analogous argument can be found
in~\cite[Section~4, Proposition~6]{AssaadTudor2021}.

\begin{proposition}[Almost sure convergence of the spatial quadratic variation]\label{prop:spatial_qv_as}
	Under the notation and hypotheses of Proposition~\ref{prop:L2_spatial_conv},
	\[
	\lim_{N\to\infty}\sum_{j=1}^{N}\bigl(\psi(t,x_j)-\psi(t,x_{j-1})\bigr)^{2}
	=\tfrac12\,(A_2-A_1)
	\qquad\text{almost surely.}
	\]
\end{proposition}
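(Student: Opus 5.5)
Set \(V_N:=\sum_{j=1}^{N}\Delta^{2}(x_{j-1},\delta)\) with \(\delta=(A_2-A_1)/N\), and write \(V_N-\tfrac12(A_2-A_1)=\bigl(V_N-\mathbb{E}V_N\bigr)+\bigl(\mathbb{E}V_N-\tfrac12(A_2-A_1)\bigr)\). The deterministic part is handled by~\eqref{eq:var_inc}: \(\mathbb{E}V_N=N\bigl(\tfrac12\delta+O(\delta^{2})\bigr)=\tfrac12(A_2-A_1)+O(1/N)\to0\). The proof thus reduces to showing \(V_N-\mathbb{E}V_N\to0\) almost surely. Note that we use the full sequence \(N\to\infty\), not a subsequence. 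So a direct Borel--Cantelli argument needs a summable tail bound, which the variance alone does not give.

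First, we quantify the variance. The computation in the proof of Proposition~\ref{prop:L2_spatial_conv} actually yields \(\mathbb{E}\bigl[(V_N-\mathbb{E}V_N)^{2}\bigr]\le\mathbb{E}\bigl[(V_N-\tfrac12(A_2-A_1))^{2}\bigr]=O(\delta)=O(1/N)\). Both the diagonal sum \(N\,O(\delta^{2})\) and the off-diagonal sum \(N^{2}O(\delta^{3})\) are \(O(1/N)\). The one point to check is that the \(O(\delta^{3})\) in~\eqref{eq:sq_cross} is uniform in the pair \(j<k\). Near the kink it need not be: this concerns the adjacent terms \(k=j+1\), and the terms with \(x_{k-1}-x_{j-1}\) close to \(2\pi\) when \(A_2-A_1\) is near \(2\pi\). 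For those \(O(N)\) pairs I would fall back on Cauchy--Schwarz with~\eqref{eq:fourth_inc}, which gives \(O(\delta^{2})\) each and hence \(O(1/N)\) in total. The remaining pairs are uniformly \(O(\delta^{3})\) by the Taylor argument, since the arguments stay at distance at least \(\delta\) from the kink and the \(\cosh\) expansion has uniformly bounded remainder. This uniformity check is the step I expect to need the most care.

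Second, we upgrade to higher moments. \(V_N-\mathbb{E}V_N\) lies in the second Wiener chaos generated by \(W\), being a centred quadratic functional of the Gaussian field \(\psi\). By hypercontractivity, for every \(p\ge2\),
\[
\mathbb{E}\bigl|V_N-\mathbb{E}V_N\bigr|^{p}\le (p-1)^{p}\bigl(\mathbb{E}|V_N-\mathbb{E}V_N|^{2}\bigr)^{p/2}\le C_p N^{-p/2}.
\]
For fixed \(\varepsilon>0\), Markov's inequality then gives \(\mathbb{P}(|V_N-\mathbb{E}V_N|>\varepsilon)\le C_p\varepsilon^{-p}N^{-p/2}\). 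Choosing \(p>2\), say \(p=4\), makes this summable in \(N\). Borel--Cantelli then gives \(\limsup_N|V_N-\mathbb{E}V_N|\le\varepsilon\) almost surely. Letting \(\varepsilon\) range over \(1/m\), \(m\in\mathbb{N}\), and intersecting the resulting full-measure events, we get \(V_N-\mathbb{E}V_N\to0\) almost surely. Combined with the deterministic convergence, this is the statement.
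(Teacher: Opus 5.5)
Your proposal is correct and follows the paper's own argument: an $O(1/N)$ variance for the centred sum, hypercontractivity on the second Wiener chaos, Markov's inequality with Borel--Cantelli, and an $O(1/N)$ deterministic part. The only cosmetic difference is that you use a fixed threshold $\varepsilon=1/m$ and intersect countably many full-measure events, whereas the paper uses the shrinking threshold $N^{-\varepsilon}$.

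Your uniformity worry is unnecessary, though your Cauchy--Schwarz fallback is valid. Since $\kappa=(k-j)\delta$ satisfies $0\le\kappa-\delta<\kappa+\delta\le A_2-A_1\le 2\pi$, all three arguments lie in $[0,2\pi]$, where $C(z)=\cosh(z-\pi)/(4\sinh\pi)$ is smooth. In fact $2C(\kappa)-C(\kappa-\delta)-C(\kappa+\delta)=-\cosh(\kappa-\pi)(\cosh\delta-1)/(2\sinh\pi)$ exactly, which is uniformly $O(\delta^{2})$ even for adjacent pairs.
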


\begin{proof}
	Let \(c=1/2\) and define the centred sum
	\[
	V_N=\sum_{j=1}^{N}\bigl(\Delta^{2}(x_{j-1},\delta)-\mathbb{E}[\Delta^{2}(x_{j-1},\delta)]\bigr).
	\]
	As shown in the proof of Proposition~\ref{prop:L2_spatial_conv}, the increments
	satisfy~\eqref{eq:var_inc} and~\eqref{eq:sq_cross}, which yield
	\(\mathbb{E}[V_N^{2}]=O(N^{-1})\): there is \(C_0\) with
	\(\mathbb{E}[V_N^{2}]\le C_0/N\). Each \(\Delta(x_{j-1},\delta)\) is a centred
	Gaussian variable, hence \(\Delta^{2}(x_{j-1},\delta)-\mathbb{E}[\Delta^{2}(x_{j-1},\delta)]\)
	belongs to the second Wiener chaos, and so does the linear combination \(V_N\). By
	the hypercontractivity inequality on a fixed chaos
	(see, e.g., \cite[Theorem~1.4.1]{Nualart2006}), for every \(p\ge1\) there is a
	constant \(C_p\), independent of \(N\), such that
	\[
	\mathbb{E}\bigl[|V_N|^{2p}\bigr]\le C_p\bigl(\mathbb{E}[V_N^{2}]\bigr)^{p}
	\le C_pC_0^{p}N^{-p}.
	\]
	Fix \(\varepsilon\in(0,\tfrac12)\). Markov's inequality gives
	\[
	\mathbb{P}\bigl(|V_N|\ge N^{-\varepsilon}\bigr)
	\le N^{2\varepsilon p}\,\mathbb{E}\bigl[|V_N|^{2p}\bigr]
	\le C_pC_0^{p}N^{(2\varepsilon-1)p}.
	\]
	Because \(2\varepsilon-1<0\), we may choose \(p\) so large that
	\((2\varepsilon-1)p<-1\); the series \(\sum_{N\ge1}N^{(2\varepsilon-1)p}\) then
	converges, and the Borel--Cantelli lemma implies
	\[
	\mathbb{P}\bigl(|V_N|\ge N^{-\varepsilon}\ \text{for infinitely many }N\bigr)=0 .
	\]
	Hence \(V_N\to0\) almost surely. The deterministic part is \(O(N^{-1})\)
	by~\eqref{eq:var_inc}, therefore
	\[
	\sum_{j=1}^{N}\Delta^{2}(x_{j-1},\delta)-\tfrac12(A_2-A_1)
	=V_N+O(N^{-1})\xrightarrow{\ \text{a.s.}\ }0 .
	\qedhere
	\]
\end{proof}

\subsection{Temporal quartic variation}

Fix a spatial point \(x\in\mathbb{T}\) and an interval \([T_1,T_2]\) with
\(0<T_1<T_2\). We examine the quartic variation of the temporal trajectory
\(t\mapsto\psi(t,x)\) over \([T_1,T_2]\). Let \(s,t\in[T_1,T_2]\) and let the step
\(\delta\) satisfy \(0<\delta\le t-s\); the increments \(\Gamma(s,\delta)\),
\(\Gamma(t,\delta)\) are those of Notation~\ref{not:increments}. In what follows we
assume \(\delta\) to be small enough for the asymptotic expansion~\eqref{eq:ups:expansion}
of \(\Upsilon\), derived in Appendix~\ref{appendix:C}, to apply; this is justified
because the final limit procedure sends \(\delta\) to zero.

\medskip
\noindent\textbf{Expected square of an increment.}
By the definition~\eqref{eq:ups_definition} of \(\Upsilon\) and the
expansion~\eqref{eq:ups:expansion},
\[
\begin{aligned}
\mathbb{E}[\Gamma^{2}(s,\delta)]
&=\mathbb{E}[\psi(s+\delta,x)^{2}]-2\mathbb{E}[\psi(s+\delta,x)\psi(s,x)]
  +\mathbb{E}[\psi(s,x)^{2}]\\
&=\frac{1}{2\pi}\bigl(\Upsilon(0)-\Upsilon(\delta)\bigr)
 =\frac{1}{2\pi}\bigl(2\sqrt{\pi\delta}+O(\delta^{3/2})\bigr),
\end{aligned}
\]
that is,
\begin{equation}\label{square_time}
	\mathbb{E}[\Gamma^{2}(s,\delta)]=\frac{1}{\sqrt\pi}\,\delta^{1/2}+O(\delta^{3/2}).
\end{equation}

\medskip
\noindent\textbf{Expected cross product.}
Similarly, by~\eqref{eq:ups_definition}, with \(\tau=t-s\ge\delta>0\),
\[
\begin{aligned}
\mathbb{E}[\Gamma(s,\delta)\Gamma(t,\delta)]
&=\frac{1}{4\pi}\Bigl[\Upsilon(|s+\delta-t-\delta|)+\Upsilon(|s-t|)
-\Upsilon(|s+\delta-t|)-\Upsilon(|s-t-\delta|)\Bigr]\\
&=\frac{1}{4\pi}\Bigl[2\Upsilon(\tau)-\Upsilon(\tau-\delta)-\Upsilon(\tau+\delta)\Bigr].
\end{aligned}
\]
Using the identity (Taylor's theorem applied to the second difference)
\[
2\Upsilon(\tau)-\Upsilon(\tau-\delta)-\Upsilon(\tau+\delta)
=-\int_{-\delta}^{\delta}(\delta-|u|)\,\Upsilon''(\tau+u)\,du
\]
and the fact that \(\Upsilon''\) is positive~\eqref{eq:ups_2_der} and
decreasing~\eqref{eq:ups_3_der}, we estimate this expression in three regimes.

\noindent\textbf{Case \(\tau\ge1\).} Provided \(\delta\) is small,
\(\tau-\delta\ge\tau/2\ge1/2\), and the monotonicity of \(\Upsilon''\) yields
\[
|2\Upsilon(\tau)-\Upsilon(\tau-\delta)-\Upsilon(\tau+\delta)|
\le\delta^{2}\Upsilon''(\tau-\delta)\le\delta^{2}\Upsilon''(1/2)=O(\delta^{2}).
\]

\noindent\textbf{Case \(2\delta\le\tau<1\).} Here \(\tau-\delta\ge\tau/2\ge\delta\).
The asymptotic behaviour~\eqref{eq:ups:expansion2} of \(\Upsilon''\) gives
\(\Upsilon''(v)\le Cv^{-3/2}\) for \(v\in(0,1)\). Hence
\[
|2\Upsilon(\tau)-\Upsilon(\tau-\delta)-\Upsilon(\tau+\delta)|
\le\delta^{2}\Upsilon''(\tau-\delta)\le C\delta^{2}(\tau/2)^{-3/2}=O(\tau^{-3/2}\delta^{2}).
\]

\noindent\textbf{Case \(\delta\le\tau\le2\delta\).} In this range we use the
expansion~\eqref{eq:ups:expansion} of \(\Upsilon\) directly:
\[
2\Upsilon(\tau)-\Upsilon(\tau-\delta)-\Upsilon(\tau+\delta)
=2\sqrt\pi\bigl(-2\sqrt\tau+\sqrt{\tau-\delta}+\sqrt{\tau+\delta}\bigr)+O(\delta^{3/2}).
\]
Set \(u=\delta/\tau\in[\tfrac12,1]\) and \(g(u)=-2+\sqrt{1-u}+\sqrt{1+u}\), so
that \(-2\sqrt\tau+\sqrt{\tau-\delta}+\sqrt{\tau+\delta}=\sqrt\tau\,g(u)\). Since
\(g(u)/u^{2}\) is continuous on \([\tfrac12,1]\), there is a constant \(K\) with
\(|g(u)|\le Ku^{2}\), and hence
\[
|2\Upsilon(\tau)-\Upsilon(\tau-\delta)-\Upsilon(\tau+\delta)|
\le2\sqrt\pi K\,\frac{\delta^{2}}{\tau^{3/2}}+O(\delta^{3/2}).
\]
The error term is absorbed: using \(\tau\le2\delta\) and \(\delta<1\),
\[
\delta^{3/2}=\delta^{3/2}\,\frac{\tau^{3/2}}{\tau^{3/2}}
\le\delta^{3/2}\,\frac{(2\delta)^{3/2}}{\tau^{3/2}}
=2^{3/2}\,\delta\,\frac{\delta^{2}}{\tau^{3/2}}
\le C\,\frac{\delta^{2}}{\tau^{3/2}} .
\]
Therefore, uniformly in \(\tau\ge\delta\),
\begin{equation}\label{croos_time}
	\mathbb{E}[\Gamma(s,\delta)\Gamma(t,\delta)]=O(\delta^{2})+O\bigl((t-s)^{-3/2}\delta^{2}\bigr).
\end{equation}

\medskip
\noindent\textbf{Higher moments via Wick's theorem.}
Using Wick's theorem for the Gaussian increments (see Section~2
in~\cite{PospisilTribe2007}) together with~\eqref{square_time}
and~\eqref{croos_time}, we obtain the following relations, valid uniformly for
\(s,t\in[T_1,T_2]\) with \(\delta\le t-s\):
\begin{align}
	\mathbb{E}[\Gamma^{4}(s,\delta)]&=\frac{3}{\pi}\,\delta+O(\delta^{2}),\label{four_time}\\
	\mathbb{E}[\Gamma^{8}(s,\delta)]&=O(\delta^{2}),\label{eq:eigth_time}\\
	\mathbb{E}[\Gamma^{4}(s,\delta)\Gamma^{4}(t,\delta)]
	&=\frac{9}{\pi^{2}}\,\delta^{2}+O(\delta^{3})+O\bigl((t-s)^{-3}\delta^{5}\bigr).\label{eq:croos_quartic_time}
\end{align}
Indeed, for a centred Gaussian pair \((X,Y)\) with variances \(a,b\) and covariance
\(\rho\), Wick's theorem gives \(\mathbb{E}[X^{4}]=3a^{2}\),
\(\mathbb{E}[X^{8}]=105a^{4}\) and
\(\mathbb{E}[X^{4}Y^{4}]=9a^{2}b^{2}+72ab\rho^{2}+24\rho^{4}\); inserting
\(a,b=\pi^{-1/2}\delta^{1/2}+O(\delta^{3/2})\) and
\(\rho=O(\delta^{2})+O((t-s)^{-3/2}\delta^{2})\) yields the three relations.

\begin{proposition}[Temporal quartic variation]\label{prop:temp_quartic_L2}
	Fix \(0<T_1<T_2\) and \(x\in\mathbb{T}\). For \(j=0,1,\dots,N\) consider the
	partition \(t_j=T_1+j\delta\) with \(\delta=(T_2-T_1)/N\). Then, in
	\(L^{2}(\Omega)\),
	\[
	\lim_{N\to\infty}\sum_{j=1}^{N}\bigl(\psi(t_j,x)-\psi(t_{j-1},x)\bigr)^{4}
	=\frac{3}{\pi}(T_2-T_1).
	\]
\end{proposition}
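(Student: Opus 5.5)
The proof mirrors that of Proposition~\ref{prop:L2_spatial_conv}. With \(c=3/\pi\), convergence in \(L^{2}(\Omega)\) is equivalent to
\[
\lim_{N\to\infty}\sum_{j=1}^{N}\sum_{k=1}^{N}
\mathbb{E}\Bigl[\bigl(\Gamma^{4}(t_{j-1},\delta)-c\delta\bigr)\bigl(\Gamma^{4}(t_{k-1},\delta)-c\delta\bigr)\Bigr]=0 ,
\]
since \(\sum_j c\delta=c(T_2-T_1)\) exactly. We split the double sum into diagonal terms and off-diagonal terms \(j\neq k\). For the latter we set \(\tau=|t_{k-1}-t_{j-1}|=|k-j|\delta\ge\delta\), so that the moment relations~\eqref{four_time}--\eqref{eq:croos_quartic_time} apply.

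\textbf{Diagonal terms.} Expanding the square and using~\eqref{eq:eigth_time} and~\eqref{four_time}, each term equals \(\mathbb{E}[\Gamma^{8}]-2c\delta\,\mathbb{E}[\Gamma^{4}]+c^{2}\delta^{2}=O(\delta^{2})\). Summing over \(N\) terms gives \(O(\delta)\to0\).

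\textbf{Off-diagonal terms.} By~\eqref{four_time}, each mixed term satisfies \(c\delta\,\mathbb{E}[\Gamma^{4}]=c^{2}\delta^{2}+O(\delta^{3})\). Combining with~\eqref{eq:croos_quartic_time}, the summand for the pair \((j,k)\) is
\[
O(\delta^{3})+O\bigl(\tau^{-3}\delta^{5}\bigr)=O(\delta^{3})+O\bigl(|k-j|^{-3}\delta^{2}\bigr).
\]
The first piece summed over \(N^{2}\) pairs gives \(O(N^{2}\delta^{3})=O(\delta)\). For the second piece we group pairs by \(m=|k-j|\ge1\); there are at most \(2N\) pairs with a given \(m\). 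This gives
\[
\sum_{j\neq k}|k-j|^{-3}\delta^{2}\le 2N\delta^{2}\sum_{m\ge1}m^{-3}=O(\delta).
\]
Hence the whole off-diagonal contribution is \(O(\delta)\to0\), which completes the argument.

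\textbf{Main obstacle.} The delicate point is the near-diagonal regime \(|k-j|\) small, where the covariance~\eqref{croos_time} is not \(O(\delta^{2})\) uniformly but only \(O(\tau^{-3/2}\delta^{2})\). One must check that, after Wick's formula \(\mathbb{E}[X^{4}Y^{4}]=9a^{2}b^{2}+72ab\rho^{2}+24\rho^{4}\), the dangerous terms \(ab\rho^{2}\) and \(\rho^{4}\) are really bounded by \(\tau^{-3}\delta^{5}\) and \(\tau^{-6}\delta^{8}\). With \(a,b\asymp\delta^{1/2}\) this holds: \(ab\rho^{2}\lesssim\delta\cdot\tau^{-3}\delta^{4}\). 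Since \(\tau\ge\delta\), the \(\rho^{4}\) term is dominated by this one. These bounds are summable in \(m\) exactly because the exponent \(-3\) exceeds \(1\) in absolute value. A further point to verify is that the error terms \(O(\delta^{3/2})\) in \(a,b\) contribute only \(O(\delta^{3})\) to \(9a^{2}b^{2}-c^{2}\delta^{2}\), where \(c^{2}=9/\pi^{2}\). This follows from \(a^{2}=\pi^{-1}\delta+O(\delta^{2})\), uniformly in \(s\) by stationarity.
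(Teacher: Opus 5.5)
Your proposal is correct and is essentially the paper's own proof: the same reduction to the double sum of $\bigl(\Gamma^{4}-c\delta\bigr)$-covariances, the same diagonal bound via \eqref{four_time}--\eqref{eq:eigth_time}, and the same off-diagonal bound $O(\delta^{3})+O(|k-j|^{-3}\delta^{2})$, summed using $\sum_{m\ge1}m^{-3}<\infty$. Your check of the Wick expansion $9a^{2}b^{2}+72ab\rho^{2}+24\rho^{4}$ against \eqref{croos_time} is the same justification the paper gives for \eqref{eq:croos_quartic_time}.
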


\begin{proof}
	Let \(c=3/\pi\). Convergence in \(L^{2}(\Omega)\) is equivalent to
	\[
	\lim_{N\to\infty}\sum_{j=1}^{N}\sum_{k=1}^{N}
	\mathbb{E}\Bigl[\bigl(\Gamma^{4}(t_{j-1},\delta)-c\delta\bigr)
	\bigl(\Gamma^{4}(t_{k-1},\delta)-c\delta\bigr)\Bigr]=0 .
	\]
	
	\noindent\textbf{Diagonal terms (\(j=k\)).} Using~\eqref{four_time}
	and~\eqref{eq:eigth_time},
	\[
	\begin{aligned}
	\mathbb{E}\bigl[(\Gamma^{4}(t_{j-1},\delta)-c\delta)^{2}\bigr]
	&=\mathbb{E}[\Gamma^{8}(t_{j-1},\delta)]-2c\delta\,\mathbb{E}[\Gamma^{4}(t_{j-1},\delta)]+c^{2}\delta^{2}\\
	&=O(\delta^{2})-2c\delta\bigl(c\delta+O(\delta^{2})\bigr)+c^{2}\delta^{2}=O(\delta^{2}),
	\end{aligned}
	\]
	and summing over the \(N\) diagonal terms gives
	\[
	\sum_{j=1}^{N}\mathbb{E}\bigl[(\Gamma^{4}(t_{j-1},\delta)-c\delta)^{2}\bigr]
	\le N\,O(\delta^{2})=O(\delta)\to0 .
	\]
	
	\noindent\textbf{Off-diagonal terms (\(j\neq k\)).} By symmetry it suffices to
	bound the sum over \(j<k\). For \(j<k\) we expand
	\[
	\begin{aligned}
	&\mathbb{E}\Bigl[\bigl(\Gamma^{4}(t_{j-1},\delta)-c\delta\bigr)\bigl(\Gamma^{4}(t_{k-1},\delta)-c\delta\bigr)\Bigr]\\
	&\qquad=\mathbb{E}[\Gamma^{4}(t_{j-1},\delta)\Gamma^{4}(t_{k-1},\delta)]
	-c\delta\bigl(\mathbb{E}[\Gamma^{4}(t_{j-1},\delta)]+\mathbb{E}[\Gamma^{4}(t_{k-1},\delta)]\bigr)
	+c^{2}\delta^{2}.
	\end{aligned}
	\]
	By~\eqref{four_time} each marginal expectation equals \(c\delta+O(\delta^{2})\), so
	the middle term equals \(-2c^{2}\delta^{2}+O(\delta^{3})\); and
	by~\eqref{eq:croos_quartic_time},
	\[
	\mathbb{E}[\Gamma^{4}(t_{j-1},\delta)\Gamma^{4}(t_{k-1},\delta)]
	=c^{2}\delta^{2}+O(\delta^{3})+O\bigl((t_k-t_j)^{-3}\delta^{5}\bigr),
	\]
	with constants uniform in \(j,k,N\). The \(c^{2}\delta^{2}\) terms cancel and we
	obtain
	\[
	\Bigl|\mathbb{E}\Bigl[\bigl(\Gamma^{4}(t_{j-1},\delta)-c\delta\bigr)\bigl(\Gamma^{4}(t_{k-1},\delta)-c\delta\bigr)\Bigr]\Bigr|
	\le C\bigl(\delta^{3}+(t_k-t_j)^{-3}\delta^{5}\bigr)
	\]
	for an absolute constant \(C\). Summing over \(1\le j<k\le N\) and using
	\(t_k-t_j=(k-j)\delta\),
	\[
	\sum_{1\le j<k\le N}\delta^{3}\le N^{2}\delta^{3}=O(\delta),
	\qquad
	\sum_{1\le j<k\le N}(t_k-t_j)^{-3}\delta^{5}
	=\delta^{2}\sum_{m=1}^{N-1}\frac{N-m}{m^{3}}
	\le\delta^{2}N\sum_{m=1}^{\infty}\frac{1}{m^{3}}=O(\delta).
	\]
	Thus the whole off-diagonal sum is \(O(\delta)\) and tends to \(0\) as
	\(N\to\infty\).
\end{proof}

\medskip
\noindent\textbf{Almost sure convergence.}
The almost sure convergence follows by the same hypercontractivity argument as in
Proposition~\ref{prop:spatial_qv_as}, because the centred sums again belong to a
fixed Wiener chaos and the \(L^{2}\) rate is \(O(1/N)\); an analogous argument
can be found in~\cite[Section~4, Proposition~9]{AssaadTudor2021}.

\begin{proposition}[Almost sure convergence of the temporal quartic variation]\label{prop:AS_temp_quartic}
	Under the notation and hypotheses of Proposition~\ref{prop:temp_quartic_L2},
	\[
	\lim_{N\to\infty}\sum_{j=1}^{N}\bigl(\psi(t_j,x)-\psi(t_{j-1},x)\bigr)^{4}
	=\frac{3}{\pi}(T_2-T_1)
	\qquad\text{almost surely.}
	\]
\end{proposition}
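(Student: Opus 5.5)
We would follow the scheme of Proposition~\ref{prop:spatial_qv_as}, with the second chaos replaced by a finite sum of chaoses. Set \(c=3/\pi\) and define the centred sum
\[
V_N=\sum_{j=1}^{N}\Bigl(\Gamma^{4}(t_{j-1},\delta)-\mathbb{E}\bigl[\Gamma^{4}(t_{j-1},\delta)\bigr]\Bigr),\qquad \delta=\frac{T_2-T_1}{N}.
\]
We then split
\[
\sum_{j=1}^{N}\Gamma^{4}(t_{j-1},\delta)-c(T_2-T_1)
=V_N+\sum_{j=1}^{N}\Bigl(\mathbb{E}\bigl[\Gamma^{4}(t_{j-1},\delta)\bigr]-c\delta\Bigr).
\]
By~\eqref{four_time} the deterministic part is \(N\,O(\delta^{2})=O(N^{-1})\). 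It therefore suffices to show \(V_N\to0\) almost surely.

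\textbf{Second moment.} We claim \(\mathbb{E}[V_N^{2}]\le C_0/N\). This is essentially the computation in the proof of Proposition~\ref{prop:temp_quartic_L2}, with \(c\delta\) replaced by the exact means. The diagonal terms are bounded by \(\mathbb{E}[\Gamma^{8}]=O(\delta^{2})\) from~\eqref{eq:eigth_time}, so they sum to \(O(\delta)\).

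For \(j<k\), the covariance \(\mathbb{E}[\Gamma^{4}_j\Gamma^{4}_k]-\mathbb{E}[\Gamma^{4}_j]\,\mathbb{E}[\Gamma^{4}_k]\) is cleaner to handle by Wick's formula directly. With variances \(a,b\) and covariance \(\rho\), it equals \(72ab\rho^{2}+24\rho^{4}\), and the \(9a^{2}b^{2}\) term cancels exactly. Inserting \(a,b=O(\delta^{1/2})\) and \(\rho=O(\delta^{2})+O((t_k-t_j)^{-3/2}\delta^{2})\) from~\eqref{croos_time} gives the bound \(C\bigl(\delta^{5}+(t_k-t_j)^{-3}\delta^{5}\bigr)\). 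Summing over \(j<k\) with \(t_k-t_j=(k-j)\delta\) yields \(O(N^{2}\delta^{5})+O(\delta^{2}N\sum_m m^{-3})=O(\delta)\). Hence \(\mathbb{E}[V_N^{2}]=O(N^{-1})\).

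\textbf{Hypercontractivity.} Each \(\Gamma(t_{j-1},\delta)\) is a centred Gaussian variable in the first chaos of the isonormal process generated by \(W\). By the Hermite expansion \(x^{4}=H_4(x)+6x^{2}\sigma^{2}-3\sigma^{4}\), written for the scaled Hermite polynomials, the centred variable \(\Gamma^{4}-\mathbb{E}\Gamma^{4}\) lies in the direct sum of the second and fourth chaoses. So does \(V_N\). Hypercontractivity on a finite sum of chaoses (\cite[Theorem~1.4.1]{Nualart2006}) then gives, for each \(p\ge1\),
\[
\mathbb{E}|V_N|^{2p}\le C_p\bigl(\mathbb{E}V_N^{2}\bigr)^{p}\le C_pC_0^{p}N^{-p},
\]
with \(C_p\) independent of \(N\).

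\textbf{Borel--Cantelli.} Fix \(\varepsilon\in(0,\tfrac12)\). Markov's inequality gives \(\mathbb{P}(|V_N|\ge N^{-\varepsilon})\le C N^{(2\varepsilon-1)p}\). Choosing \(p\) with \((2\varepsilon-1)p<-1\) makes this summable in \(N\), so Borel--Cantelli yields \(V_N\to0\) almost surely. Together with the \(O(N^{-1})\) deterministic part, this proves the claim.

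\textbf{Main obstacle.} The delicate point is the uniform \(O(N^{-1})\) bound on \(\mathbb{E}[V_N^{2}]\). In particular, one must check that the error terms in~\eqref{eq:croos_quartic_time} do not hide contributions of order \(\delta^{3}\) per pair, which would only give \(N^{2}\delta^{3}=O(\delta)\) from products of imprecise marginals. The exact Wick cancellation above avoids this issue. We also need to note that hypercontractivity holds on a finite direct sum of chaoses, not only on a single chaos as in the spatial case.
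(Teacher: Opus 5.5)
Your proposal is correct and is essentially the paper's proof: the same centred sum $V_N$, the bound $\mathbb{E}[V_N^{2}]=O(N^{-1})$, hypercontractivity on the chaoses of order at most four, Markov's inequality with Borel--Cantelli, and the $O(N^{-1})$ deterministic correction from~\eqref{four_time}. Your exact Wick identity $\mathrm{Cov}(\Gamma_j^{4},\Gamma_k^{4})=72ab\rho^{2}+24\rho^{4}$ is a tidy way to get the second-moment bound, but the ``obstacle'' you flag is not one: errors of order $\delta^{3}$ per pair give $N^{2}\delta^{3}=O(N^{-1})$, exactly the needed rate, so $\mathbb{E}[V_N^{2}]\le\mathbb{E}\bigl[\bigl(\sum_j\Gamma^{4}(t_{j-1},\delta)-c(T_2-T_1)\bigr)^{2}\bigr]=O(N^{-1})$ already follows from the bounds in the proof of Proposition~\ref{prop:temp_quartic_L2}, which is what the paper uses.
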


\begin{proof}
	Let \(c=3/\pi\) and define the centred sum
	\[
	V_N=\sum_{j=1}^{N}\bigl(\Gamma^{4}(t_{j-1},\delta)-\mathbb{E}[\Gamma^{4}(t_{j-1},\delta)]\bigr).
	\]
	From the proof of Proposition~\ref{prop:temp_quartic_L2} we have the moment
	estimate~\eqref{four_time} and the off-diagonal bounds derived
	from~\eqref{eq:croos_quartic_time}, which together give
	\(\mathbb{E}[V_N^{2}]=O(N^{-1})\): there is \(C_0>0\) with
	\(\mathbb{E}[V_N^{2}]\le C_0/N\) for all \(N\ge1\). Each \(\Gamma(t_{j-1},\delta)\)
	is a centred Gaussian variable, so its centred fourth power belongs to the direct
	sum of the second and fourth Wiener chaoses; consequently \(V_N\) lies in a finite
	Wiener chaos of order at most \(4\). By hypercontractivity, for every integer
	\(p\ge1\) there is a constant \(C_p\), depending only on \(p\) and on the order of
	the chaos (hence independent of \(N\)), such that
	\[
	\mathbb{E}\bigl[|V_N|^{2p}\bigr]\le C_p\bigl(\mathbb{E}[V_N^{2}]\bigr)^{p}\le C_pC_0^{p}N^{-p}.
	\]
	Fix \(\varepsilon\in(0,\tfrac12)\). By Markov's inequality,
	\[
	\mathbb{P}\bigl(|V_N|\ge N^{-\varepsilon}\bigr)
	\le N^{2\varepsilon p}\,\mathbb{E}\bigl[|V_N|^{2p}\bigr]\le C_pC_0^{p}N^{(2\varepsilon-1)p}.
	\]
	Since \(2\varepsilon-1<0\), we can choose \(p\) so large that
	\((2\varepsilon-1)p<-1\); the series \(\sum_{N\ge1}N^{(2\varepsilon-1)p}\) then
	converges and the Borel--Cantelli lemma yields
	\(\mathbb{P}(|V_N|\ge N^{-\varepsilon}\text{ for infinitely many }N)=0\). Thus
	\(V_N\to0\) almost surely. The deterministic part is handled
	by~\eqref{four_time}:
	\[
	\sum_{j=1}^{N}\mathbb{E}[\Gamma^{4}(t_{j-1},\delta)]
	=\sum_{j=1}^{N}\bigl(c\delta+O(\delta^{2})\bigr)
	=c(T_2-T_1)+N\cdot O(\delta^{2})=c(T_2-T_1)+O(N^{-1}),
	\]
	because \(\delta=(T_2-T_1)/N\). Putting everything together,
	\[
	\sum_{j=1}^{N}\Gamma^{4}(t_{j-1},\delta)-\frac{3}{\pi}(T_2-T_1)
	=V_N+O(N^{-1})\xrightarrow{\ \text{a.s.}\ }0 .
	\qedhere
	\]
\end{proof}

\begin{remark}[Recovering the general \(\sigma>0\)]\label{rem:scaling_variations}
	All the results stated for \(\sigma=1\) extend immediately to arbitrary
	\(\sigma>0\) by linearity: \(\psi\) is multiplied by \(\sigma\), so the quadratic
	variation picks up a factor \(\sigma^{2}\) and the quartic variation a factor
	\(\sigma^{4}\). Under the general noise intensity,
	\[
	\lim_{N\to\infty}\sum_{j=1}^{N}\bigl(\psi(t,x_j)-\psi(t,x_{j-1})\bigr)^{2}
	=\frac{\sigma^{2}}{2}(A_2-A_1),
	\]
	\[
	\lim_{N\to\infty}\sum_{j=1}^{N}\bigl(\psi(t_j,x)-\psi(t_{j-1},x)\bigr)^{4}
	=\frac{3\sigma^{4}}{\pi}(T_2-T_1),
	\]
	both in \(L^{2}(\Omega)\) and almost surely.
\end{remark}

\section{Time scaling and the role of the diffusivity parameter \texorpdfstring{$\theta$}{theta}}\label{sec:escaling}

In this section we analyse the effect of the diffusivity \(\theta>0\) on the
solution. The results of Sections~\ref{sec:regularidad}
and~\ref{sec:variaciones} concern the equation with \(\theta=1\); we show here
that the general case reduces to it by an exact change of time, which is the
basis of the estimation procedure of Section~\ref{sec:estimadores}. Consider
\begin{equation}\label{eq:theta}
	du_\theta=\theta\,\partial_x^{2}u_\theta\,dt+f(u_\theta)\,dt
	+g(u_\theta)\,\partial_x u_\theta\,dt+\sigma\,dW(t),
	\qquad (t,x)\in[0,T]\times\mathbb{T},
\end{equation}
which is~\eqref{eq:Burgers_intro}, with initial datum \(u_0\). The key
observation is a simple time scaling: if we set
\[
\hat u(t,x):=u_\theta\Bigl(\frac{t}{\theta},x\Bigr),\qquad t\in[0,\theta T],
\]
then, at a formal level, \(\hat u\) satisfies
\[
d\hat u=\partial_x^{2}\hat u\,dt+\frac1\theta f(\hat u)\,dt
+\frac1\theta g(\hat u)\,\partial_x\hat u\,dt+\frac{\sigma}{\sqrt\theta}\,d\widetilde W(t),
\]
where \(\widetilde W\) is the time-rescaled noise of~\eqref{eq:wn_scaling} below.
The rest of the section makes this rigorous at the level of the mild
formulation and of the decomposition \(u=\psi+U+v\).

\subsection{The rescaled noise and the scaled stationary process}
Under the time change \(t\mapsto t/\theta\), the noise \(W\) is not preserved:
it becomes a different cylindrical Wiener process, and the stationary solution
becomes a rescaled copy of the one driven by that new noise. This subsection
identifies both objects precisely, since it is on their pathwise identification,
rather than on equality in law, that the scaling identity of
Proposition~\ref{thm:scaling_equivalence} rests.

\begin{lemma}\label{lem:wn_scaling}
	The process
	\begin{equation}\label{eq:wn_scaling}
		\widetilde W(t):=\sqrt\theta\,W\Bigl(\frac t\theta\Bigr),\qquad t\in\mathbb{R},
	\end{equation}
	is again a standard cylindrical Wiener process on
	\(L^{2}(\mathbb{T};\mathbb{R}^{n})\).
\end{lemma}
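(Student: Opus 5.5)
The plan is to verify directly the defining properties of a standard cylindrical Wiener process recalled in Section~\ref{subsec:psi_decomp}. Concretely, \(\widetilde W\) must be a family \(\{\langle\widetilde W(t),h\rangle : t\in\mathbb{R},\ h\in H\}\) of centred Gaussian random variables, linear in \(h\), with covariance given by~\eqref{eq:cyl_cov} and its analogue for negative times. For each \(h\in H\) set
\[
\langle\widetilde W(t),h\rangle:=\sqrt\theta\,\langle W(t/\theta),h\rangle .
\]
Linearity in \(h\) is then inherited from \(W\). Joint Gaussianity and centring are also inherited, since any finite family \(\langle\widetilde W(t_k),h_k\rangle\) is a deterministic linear image of the jointly Gaussian family \(\langle W(t_k/\theta),h_k\rangle\).

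The only real computation is the covariance. For \(t,s\ge0\), applying~\eqref{eq:cyl_cov} gives
\[
\mathbb{E}\bigl[\langle\widetilde W(t),h\rangle\langle\widetilde W(s),k\rangle\bigr]
=\theta\,\Bigl(\frac t\theta\wedge\frac s\theta\Bigr)\langle h,k\rangle
=(t\wedge s)\,\langle h,k\rangle .
\]
The factor \(\theta\) cancels precisely because the min is positively homogeneous. For negative times the analogous formula \((|t|\wedge|s|)\langle h,k\rangle\) holds when \(t,s\) have the same sign, and the covariance is zero when they have opposite signs. Both cases are preserved by the same homogeneity argument, because \(\theta>0\) does not change signs. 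This gives the covariance structure of a two-sided standard cylindrical Wiener process.

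It remains to check continuity in time (or to check whether the definition requires it). For each \(h\), the map \(t\mapsto\langle\widetilde W(t),h\rangle\) is a deterministic time change and rescaling of a continuous path, so it is continuous. If a filtration is part of the definition, take \(\widetilde{\mathcal F}_t:=\mathcal F_{t/\theta}\). The increments \(\langle\widetilde W(t)-\widetilde W(s),h\rangle\) are then independent of \(\widetilde{\mathcal F}_s\), because the corresponding increments of \(W\) are independent of \(\mathcal F_{s/\theta}\).

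As a consistency check that also serves Section~\ref{sec:escaling}, the associated martingale measure should satisfy
\[
\widetilde W(ds,dy)=\sqrt\theta\,W(d(s/\theta),dy).
\]
Its isometry~\eqref{eq:wn_cov} follows from the covariance computation above together with the substitution \(s\mapsto s/\theta\), so~\eqref{eq:cyl_vs_walsh} transfers to \(\widetilde W\).

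I expect no genuine obstacle. The only point needing care is handling the two-sided index set with the same covariance convention the paper uses, and noting that the identification is pathwise rather than merely in law, which is what the later scaling identity relies on.
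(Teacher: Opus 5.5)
Your proposal is correct and follows the paper's proof. You define $\langle\widetilde W(t),h\rangle:=\sqrt\theta\,\langle W(t/\theta),h\rangle$, inherit centring, joint Gaussianity and linearity in $h$ from $W$, and get the covariance $(t\wedge s)\langle h,k\rangle$ from~\eqref{eq:cyl_cov} using the positive homogeneity of the minimum; your extra checks on negative times, continuity, the filtration $\mathcal F_{t/\theta}$ and the martingale measure go beyond what the paper writes out but are consistent with it and with the pathwise use made in Lemma~\ref{lem:scaling_psi}.
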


\begin{proof}
	Recall from Section~\ref{subsec:psi_decomp} that \(\langle W(t),h\rangle\)
	denotes the real Brownian motion obtained by testing the cylindrical process
	against \(h\in H=L^{2}(\mathbb{T};\mathbb{R}^{n})\), with
	covariance~\eqref{eq:cyl_cov}. The family
	\(\langle\widetilde W(t),h\rangle:=\sqrt\theta\,\langle W(t/\theta),h\rangle\) is
	centred, Gaussian and linear in \(h\), and for \(h,k\in H\),
	by~\eqref{eq:cyl_cov},
	\[
	\mathbb{E}\bigl[\langle\widetilde W(t),h\rangle\langle\widetilde W(s),k\rangle\bigr]
	=\theta\,\mathbb{E}\Bigl[\bigl\langle W(\tfrac t\theta),h\bigr\rangle
	\bigl\langle W(\tfrac s\theta),k\bigr\rangle\Bigr]
	=\theta\,\Bigl(\frac t\theta\wedge\frac s\theta\Bigr)\langle h,k\rangle
	=(t\wedge s)\,\langle h,k\rangle .
	\qedhere
	\]
\end{proof}

Let \(\psi^{(\theta)}\) be the stationary solution of the linear equation
\[
d\psi^{(\theta)}=\theta(\partial_x^{2}-1)\psi^{(\theta)}\,dt+\sigma\,dW(t),
\qquad (t,x)\in\mathbb{R}\times\mathbb{T},
\]
which admits the mild representation
\[
\psi^{(\theta)}(t,x)=\sigma\int_{-\infty}^{t}\!\int_{\mathbb{T}}
p^{(\theta)}_{t-s}(x-y)\,W(ds,dy),
\qquad p^{(\theta)}_t(x)=e^{-\theta t}p_{\theta t}(x),
\]
\(p^{(\theta)}\) being the kernel of \(e^{t\theta(\partial_x^{2}-1)}\). Let
also \(\widetilde\psi\) denote the stationary solution
of~\eqref{eq:estacionario_H}, with the \emph{same} noise intensity \(\sigma\),
driven by \(\widetilde W\) instead of \(W\):
\[
\widetilde\psi(t,x)=\sigma\int_{-\infty}^{t}\!\int_{\mathbb{T}}
p^{E}_{t-s}(x-y)\,\widetilde W(ds,dy).
\]

\begin{lemma}\label{lem:scaling_psi}
	For every \(t\in\mathbb{R}\) and \(x\in\mathbb{T}\), almost surely,
	\begin{equation}\label{eq:scaling_psi}
		\psi^{(\theta)}\Bigl(\frac t\theta,x\Bigr)=\frac{1}{\sqrt\theta}\,\widetilde\psi(t,x).
	\end{equation}
	Consequently the time-changed process
	\(\varphi(t,x):=\psi^{(\theta)}(t/\theta,x)=\theta^{-1/2}\widetilde\psi(t,x)\)
	solves
	\[
	d\varphi=(\partial_x^{2}-1)\varphi\,dt+\frac{\sigma}{\sqrt\theta}\,d\widetilde W(t).
	\]
\end{lemma}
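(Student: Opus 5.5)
The plan is to prove \eqref{eq:scaling_psi} pathwise, as an identity between Walsh integrals, by a change of time variable in the stochastic integral. The first step is the kernel identity
\[
p^{(\theta)}_{t/\theta-r}(z)=e^{-\theta(t/\theta-r)}p_{\theta(t/\theta-r)}(z)=e^{-(t-\theta r)}p_{t-\theta r}(z)=p^{E}_{t-\theta r}(z),
\]
which follows directly from the definitions of \(p^{(\theta)}\) and \(p^{E}\) in \eqref{eq:damped_kernel}. Hence
\[
\psi^{(\theta)}\Bigl(\frac t\theta,x\Bigr)=\sigma\int_{-\infty}^{t/\theta}\!\int_{\mathbb{T}}p^{E}_{t-\theta r}(x-y)\,W(dr,dy).
\]

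The second step is the substitution rule for the martingale measure. From \eqref{eq:wn_scaling}, the measure of \(\widetilde W\) satisfies \(\widetilde W((a,b]\times A)=\sqrt\theta\,W((a/\theta,b/\theta]\times A)\). For simple integrands \(h\) this gives
\[
\int_{\mathbb{R}}\!\int_{\mathbb{T}}h(s,y)\,\widetilde W(ds,dy)=\sqrt\theta\int_{\mathbb{R}}\!\int_{\mathbb{T}}h(\theta r,y)\,W(dr,dy).
\]
Both sides have the same second moment by \eqref{eq:wn_cov}, since \(\theta\int h(\theta r,y)^2\,dr\,dy=\int h(s,y)^2\,ds\,dy\). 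Both sides are therefore \(L^{2}(\Omega)\)-isometries in \(h\), and the identity extends by density to all \(h\in L^{2}(\mathbb{R}\times\mathbb{T})\), almost surely. I would apply it to \(h(s,y)=\mathbf 1_{\{s<t\}}p^{E}_{t-s}(x-y)\), which lies in \(L^{2}\) by the remark after \eqref{eq:psi_mild}. This yields \(\widetilde\psi(t,x)=\sqrt\theta\,\sigma\int_{-\infty}^{t/\theta}\!\int p^{E}_{t-\theta r}(x-y)\,W(dr,dy)\), and comparing with the first display gives \eqref{eq:scaling_psi}.

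The main point of care is this extension step. The identity must be established pathwise, meaning almost surely for each fixed \((t,x)\), and not merely in law. Working at the level of the isometry, rather than with distributions, is what delivers that.

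For the consequence, \(\varphi=\theta^{-1/2}\widetilde\psi\), and by Lemma~\ref{lem:wn_scaling} \(\widetilde W\) is a standard cylindrical Wiener process. The process \(\widetilde\psi\) is by definition the stationary mild solution of \eqref{eq:estacionario_H1} driven by \(\sigma\,d\widetilde W\). By linearity, \(\varphi\) is the stationary mild solution of the same equation with intensity \(\sigma/\sqrt\theta\), and this is the stated equation, understood in the mild sense \eqref{eq:psi_mild}. If a version statement is wanted for all \((t,x)\) simultaneously, I would add that both sides are continuous in \((t,x)\) and agree on a countable dense set, hence agree everywhere almost surely.
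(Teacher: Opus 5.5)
Your proof is correct and takes the same route as the paper. Both use the kernel identity \(p^{(\theta)}_{t/\theta-r}=p^{E}_{t-\theta r}\) with the time substitution in the Walsh integral, and then linearity for the equation satisfied by \(\varphi\). The only difference is that you justify the substitution rule \(W(d(r/\theta),dy)=\theta^{-1/2}\widetilde W(dr,dy)\) on simple integrands and extend it by the isometry~\eqref{eq:wn_cov}, a step the paper simply asserts from~\eqref{eq:wn_scaling}.
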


\begin{proof}
	In the mild representation of \(\psi^{(\theta)}(t/\theta,x)\) substitute
	\(s=r/\theta\). The kernel becomes
	\[
	p^{(\theta)}_{t/\theta-r/\theta}=e^{-(t-r)}\,p_{t-r}=p^{E}_{t-r},
	\]
	and the martingale measure \(W(d(r/\theta),dy)\) is
	\(\theta^{-1/2}\widetilde W(dr,dy)\) by~\eqref{eq:wn_scaling}. Hence
	\[
	\psi^{(\theta)}\Bigl(\frac t\theta,x\Bigr)
	=\frac{\sigma}{\sqrt\theta}\int_{-\infty}^{t}\!\int_{\mathbb{T}}
	p^{E}_{t-r}(x-y)\,\widetilde W(dr,dy)
	=\frac{1}{\sqrt\theta}\,\widetilde\psi(t,x).
	\]
	The equation for \(\varphi\) follows from that of \(\widetilde\psi\) by
	linearity.
\end{proof}

\begin{remark}[The law of \(\widetilde\psi\)]\label{rem:law_only}
	Since \(\widetilde W\) and \(W\) are both standard cylindrical Wiener processes,
	\(\widetilde\psi\) has the same law as \(\psi\); as processes they differ,
	\(\widetilde\psi\) being a time-rescaling of \(\psi\). Everything proved about
	\(\psi\) in Section~\ref{sec:variaciones} — the covariance~\eqref{eq:psi_cov},
	Propositions~\ref{prop:L2_spatial_conv}--\ref{prop:AS_temp_quartic} and
	Remark~\ref{rem:scaling_variations} — is a statement about the law of the
	process, and therefore holds verbatim for \(\widetilde\psi\). This is what will
	be used in Section~\ref{sec:estimadores}. Likewise \(\widetilde\psi\) admits a
	canonical lift \(\widetilde{\boldsymbol\Psi}=(\widetilde\Psi,\widetilde{\boldsymbol\Psi})\)
	by Lemma~\ref{lem:rp_levantamiento}, and \(K_T\) of~\eqref{eq:KT} has an
	obvious counterpart for it.
\end{remark}

\subsection{The rescaled equation with unit diffusivity}

The previous observation suggests that we study the auxiliary equation
\begin{equation}\label{eq:uhat}
	d\hat u=\partial_x^{2}\hat u\,dt
	+\frac1\theta\bigl(f(\hat u)\,dt+g(\hat u)\,\partial_x\hat u\,dt\bigr)
	+\frac{\sigma}{\sqrt\theta}\,d\widetilde W(t),
	\qquad t\in[0,\theta T],
\end{equation}
with initial condition \(\hat u(0,\cdot)=u_0\). Its coefficients \(f/\theta\),
\(g/\theta\) satisfy Assumption~\ref{ass:standing}(A2), so
Theorem~\ref{theorem:Existence_Uq} applies: \eqref{eq:uhat} has a unique mild
solution on \([0,\theta T]\), in the sense of
Definition~\ref{def:mild_principal} with \(\psi\) replaced by the stationary
process \(\varphi\) of Lemma~\ref{lem:scaling_psi}. Explicitly,
\begin{equation}\label{eq:uhat_mild_solution}
	\hat u=\varphi+U+v^{\varphi},
	\qquad U_t=S_t(u_0-\varphi_0),
\end{equation}
where \(\varphi_0=\psi^{(\theta)}(0,\cdot)=\theta^{-1/2}\widetilde\psi_0\), and
the remainder \(v^{\varphi}\in C([0,\theta T];C^{1})\) satisfies, for
\(t\in[0,\theta T]\) and \(x\in\mathbb{T}\),
\begin{equation}\label{eq:vvarphi_mild}
	\begin{aligned}
		v^{\varphi}_t(x)
		&=\int_0^t S_{t-s}\Bigl(\frac{g(\hat u_s)}{\theta}\,(\partial_x v^{\varphi}_s+\partial_x U_s)
		+\frac{f(\hat u_s)}{\theta}+\varphi_s\Bigr)(x)\,ds\\
		&\quad+\int_0^t\!\int_{\mathbb{T}}p_{t-s}(x-y)\,\frac{g(\hat u_s(y))}{\theta}\,d\Phi_s(y)\,ds .
	\end{aligned}
\end{equation}
Here \(\boldsymbol\Phi=(\Phi,\boldsymbol\Phi)\) denotes the canonical lift of
\(\varphi\). Since \(\varphi=\theta^{-1/2}\widetilde\psi\), the two levels scale
differently — the first is homogeneous of degree one in the path and the second,
being an iterated integral, of degree two — so that
\begin{equation}\label{eq:lift_scaling}
	\Phi_t=\theta^{-1/2}\,\widetilde\Psi_t,
	\qquad
	\boldsymbol\Phi_t=\theta^{-1}\,\widetilde{\boldsymbol\Psi}_t ,
\end{equation}
and in particular
\(\|\Phi_t\|_{C^{\alpha}}=\theta^{-1/2}\|\widetilde\Psi_t\|_{C^{\alpha}}\),
\(\|\boldsymbol\Phi_t\|_{2\alpha}=\theta^{-1}\|\widetilde{\boldsymbol\Psi}_t\|_{2\alpha}\).
The rough integral is homogeneous of degree one under this dilation: if \(Y\) is
controlled by \(\widetilde\Psi_s\) with derivative process \(Y'\), it is
controlled by \(\Phi_s\) with derivative process \(\theta^{1/2}Y'\), and the
compensated Riemann sums~\eqref{eq:rough_riemman} for the two integrals differ
exactly by the factor \(\theta^{-1/2}\):
\begin{equation}\label{eq:rough_int_scaling}
	\int_0^{2\pi}Y(y)\,d\Phi_s(y)=\theta^{-1/2}\int_0^{2\pi}Y(y)\,d\widetilde\Psi_s(y)
\end{equation}
(see~\cite[Remark~2.4]{Hairer2011}). By Lemma~\ref{lem:rp_composition} the
constants of Section~\ref{sec:regularidad} for the rescaled equation depend on
\(\theta\), but \(\theta\) is fixed throughout, so Proposition~\ref{prop:regularity_v}
and Corollary~\ref{cor:R_properties} apply to \(v^{\varphi}\) and \(R:=U+v^{\varphi}\)
without change.

\subsection{The scaling identity}

We now transport the decomposition~\eqref{eq:uhat_mild_solution} back to
\(u_\theta\). For \(t\in[0,T]\) and \(x\in\mathbb{T}\) define
\begin{equation}\label{eq:scaling_theta}
	\begin{aligned}
		v^{(\theta)}(t,x)
		&:=\int_0^t S_{\theta(t-s)}\Bigl(g(\hat u_{\theta s})\,(\partial_x v^{\varphi}_{\theta s}+\partial_x U_{\theta s})
		+f(\hat u_{\theta s})+\sqrt\theta\,\widetilde\psi_{\theta s}\Bigr)(x)\,ds\\
		&\quad+\frac{1}{\sqrt\theta}\int_0^t\!\int_{\mathbb{T}}p_{\theta(t-s)}(x-y)\,
		g(\hat u_{\theta s}(y))\,d\widetilde\Psi_{\theta s}(y)\,ds ,
	\end{aligned}
\end{equation}
and
\[
U^{(\theta)}_t:=S_{\theta t}\bigl(u_0-\psi^{(\theta)}_0\bigr).
\]
The integrands in~\eqref{eq:scaling_theta} are expressed through the processes
\(\hat u\), \(v^{\varphi}\), \(U\), already constructed on \([0,\theta T]\), and
through the lift \(\widetilde{\boldsymbol\Psi}\); by~\eqref{eq:lift_scaling}
and~\eqref{eq:rough_int_scaling} one may rewrite the rough integral in terms of
\(\boldsymbol\Phi\), and we have chosen the present form because it matches
the mild formulation of~\eqref{eq:theta} directly.

\begin{proposition}[Time scaling equivalence]\label{thm:scaling_equivalence}
	Let \(u_\theta\) be the mild solution of~\eqref{eq:theta} on \([0,T]\) with
	initial datum \(u_0\), and let \(\hat u\) be the mild solution
	of~\eqref{eq:uhat} on \([0,\theta T]\) with the same initial datum. Then
	\begin{equation}\label{eq:main_identity}
		\hat u(t,x)=u_\theta\Bigl(\frac t\theta,x\Bigr),
		\qquad t\in[0,\theta T],\ x\in\mathbb{T},
	\end{equation}
	and the decomposition is preserved by the scaling: with
	\(\hat u=\varphi+U+v^{\varphi}\) as in~\eqref{eq:uhat_mild_solution},
	\[
	u_\theta=\psi^{(\theta)}+U^{(\theta)}+v^{(\theta)},
	\]
	with the explicit relations
	\begin{equation}\label{eq:vtheta_scaling}
		\psi^{(\theta)}(t,x)=\frac{1}{\sqrt\theta}\,\widetilde\psi(\theta t,x),\qquad
		U^{(\theta)}(t,x)=U(\theta t,x),\qquad
		v^{(\theta)}(t,x)=v^{\varphi}(\theta t,x),
	\end{equation}
	for \(t\in[0,T]\). Moreover \(v^{(\theta)}\) satisfies
	\begin{equation}\label{eq:mild_solution_theta}
		\begin{aligned}
			v^{(\theta)}_t(x)
			&=\int_0^t S_{\theta(t-s)}\Bigl(g(u_\theta(s))\,(\partial_x v^{(\theta)}_s+\partial_x U^{(\theta)}_s)
			+f(u_\theta(s))+\sqrt\theta\,\widetilde\psi_{\theta s}\Bigr)(x)\,ds\\
			&\quad+\frac{1}{\sqrt\theta}\int_0^t\!\int_{\mathbb{T}}p_{\theta(t-s)}(x-y)\,
			g\bigl(u_\theta(s,y)\bigr)\,d\widetilde\Psi_{\theta s}(y)\,ds ,
		\end{aligned}
	\end{equation}
	that is, \(u_\theta\) is a mild solution of~\eqref{eq:theta} in the sense of
	Definition~\ref{def:mild_principal}, with the semigroup \(S_{\theta t}\)
	generated by \(\theta\partial_x^{2}\).
\end{proposition}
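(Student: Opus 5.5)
The plan is to construct \(u_\theta\) \emph{from} \(\hat u\), rather than the other way around. Set \(\bar u(t,x):=\hat u(\theta t,x)\) for \(t\in[0,T]\). We show that \(\bar u\) is a mild solution of~\eqref{eq:theta} in the sense of Definition~\ref{def:mild_principal} with semigroup \(S_{\theta t}\). Uniqueness, obtained from Theorem~\ref{theorem:Existence_Uq} after the same fixed-point argument with \(\theta\partial_x^2\), then gives \(\bar u=u_\theta\), which is~\eqref{eq:main_identity}.

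The decomposition is read off from~\eqref{eq:uhat_mild_solution} evaluated at time \(\theta t\): \(\bar u(t)=\varphi_{\theta t}+U_{\theta t}+v^{\varphi}_{\theta t}\). By Lemma~\ref{lem:scaling_psi}, \(\varphi_{\theta t}=\psi^{(\theta)}_t=\theta^{-1/2}\widetilde\psi_{\theta t}\). In particular \(\varphi_0=\psi^{(\theta)}_0\), so \(U_{\theta t}=S_{\theta t}(u_0-\psi^{(\theta)}_0)=U^{(\theta)}_t\). This yields the first two relations of~\eqref{eq:vtheta_scaling}. We then define \(v^{(\theta)}_t:=v^{\varphi}_{\theta t}\), and what remains is to verify that it satisfies~\eqref{eq:mild_solution_theta}. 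Continuity in \(C^{1}\) is inherited from \(v^{\varphi}\in C([0,\theta T];C^{1})\).

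The core step is a change of variables in~\eqref{eq:vvarphi_mild} written at time \(\theta t\), substituting \(s=\theta r\) so that \(ds=\theta\,dr\).

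In the Bochner term, \(S_{\theta t-\theta r}=S_{\theta(t-r)}\), and the factor \(\theta\) cancels the \(1/\theta\) in front of \(f\) and \(g\). Using \(\partial_x U_{\theta r}=\partial_x U^{(\theta)}_r\) and \(\partial_x v^{\varphi}_{\theta r}=\partial_x v^{(\theta)}_r\), this term becomes the first line of~\eqref{eq:mild_solution_theta}. The only exception is the compensator \(\theta\varphi_{\theta r}=\sqrt\theta\,\widetilde\psi_{\theta r}\), and this is exactly the term displayed there.

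In the rough term, the same substitution gives \(\theta\cdot\theta^{-1}\int_0^t\!\int p_{\theta(t-r)}(x-y)\,g(\hat u_{\theta r}(y))\,d\Phi_{\theta r}(y)\,dr\). Applying~\eqref{eq:rough_int_scaling} replaces \(d\Phi_{\theta r}\) by \(\theta^{-1/2}d\widetilde\Psi_{\theta r}\), which produces the second line. For this we must check that the integrand \(y\mapsto p_{\theta(t-r)}(x-y)g(\hat u_{\theta r}(y))\) is controlled by both \(\Phi_{\theta r}\) and \(\widetilde\Psi_{\theta r}\), with derivatives related by the factor \(\theta^{1/2}\). This follows from constructions (ii)--(iii) and~\eqref{eq:lift_scaling}, since Chen's relation and the compensated Riemann sums~\eqref{eq:rough_riemman} are homogeneous under \((X,\mathbb X)\mapsto(\lambda X,\lambda^2\mathbb X)\).

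Finally, the resulting equation is identified as the mild formulation of~\eqref{eq:theta} for \(\theta\partial_x^2\). Subtracting \(\psi^{(\theta)}\), which solves the damped equation with generator \(\theta(\partial_x^2-1)\), leaves the compensator \(\theta\psi^{(\theta)}=\sqrt\theta\,\widetilde\psi_{\theta\cdot}\). The rough term is \(g(u_\theta)\partial_x\psi^{(\theta)}\), interpreted through the lift of \(\psi^{(\theta)}_s=\theta^{-1/2}\widetilde\psi_{\theta s}\), whose lift is \((\theta^{-1/2}\widetilde\Psi_{\theta s},\theta^{-1}\widetilde{\boldsymbol\Psi}_{\theta s})\). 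This is again~\eqref{eq:rough_int_scaling}.

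I expect the main obstacle to be justifying uniqueness for the \(\theta\)-equation. Theorem~\ref{theorem:Existence_Uq} is stated for \(\theta=1\), and the notion of solution of~\eqref{eq:theta} with the lift of \(\psi^{(\theta)}\) must be fixed consistently. I would handle this by running the argument in reverse. Given any mild solution \(u_\theta\), time-changing it produces a mild solution of~\eqref{eq:uhat}, by the same computation read backwards. Uniqueness for~\eqref{eq:uhat} from Theorem~\ref{theorem:Existence_Uq} then forces equality. All identities hold for a.e.\ realisation, since they are pathwise, except for the a.s.\ identity of Lemma~\ref{lem:scaling_psi}. That identity holds for each \((t,x)\) and extends to all \((t,x)\) simultaneously by continuity of both sides.
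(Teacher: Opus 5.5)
Your proposal is correct and follows the paper's proof. Like the paper, you build the time-changed process from $\hat u$, verify the $\theta$-mild formulation through the substitution $s=\theta r$ together with \eqref{eq:rough_int_scaling}, and conclude by uniqueness. Your reverse time change, which reduces uniqueness for \eqref{eq:theta} to uniqueness for the unit-diffusivity equation \eqref{eq:uhat}, is a genuine tightening: the paper instead invokes the uniqueness part of Theorem~\ref{theorem:Existence_Uq} directly for the $\theta$-equation, although that theorem is stated only for $\theta=1$.
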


\begin{proof}
	The first relation in~\eqref{eq:vtheta_scaling} is Lemma~\ref{lem:scaling_psi}
	with \(t\) replaced by \(\theta t\). The second is immediate:
	\(U^{(\theta)}(t/\theta,x)=S_t(u_0-\psi^{(\theta)}_0)(x)=S_t(u_0-\varphi_0)(x)=U_t(x)\),
	using \(\psi^{(\theta)}_0=\varphi_0\).
	
	For the third, the change of variables \(s\mapsto s/\theta\) in the definition
	\eqref{eq:scaling_theta}, together with \(S_{\theta(t/\theta-s/\theta)}=S_{t-s}\),
	\(p_{\theta(t/\theta-s/\theta)}=p_{t-s}\), \(\sqrt\theta\,\widetilde\psi_s/\theta=\varphi_s\)
	and~\eqref{eq:rough_int_scaling}, gives
	\[
	\begin{aligned}
		v^{(\theta)}\Bigl(\frac t\theta,x\Bigr)
		&=\int_0^t S_{t-s}\Bigl(\frac{g(\hat u_s)}{\theta}\,(\partial_x v^{\varphi}_s+\partial_x U_s)
		+\frac{f(\hat u_s)}{\theta}+\varphi_s\Bigr)(x)\,ds\\
		&\quad+\int_0^t\!\int_{\mathbb{T}}p_{t-s}(x-y)\,\frac{g(\hat u_s(y))}{\theta}\,d\Phi_s(y)\,ds
		\;=\;v^{\varphi}(t,x),
	\end{aligned}
	\]
	the last equality being~\eqref{eq:vvarphi_mild}. Hence the process
	\(w(t,x):=\psi^{(\theta)}(t,x)+U^{(\theta)}(t,x)+v^{(\theta)}(t,x)\) satisfies
	\(w(t/\theta,x)=\varphi(t,x)+U_t(x)+v^{\varphi}(t,x)=\hat u(t,x)\), and
	substituting \(\hat u_{\theta s}=w_s\), \(v^{\varphi}_{\theta s}=v^{(\theta)}_s\),
	\(U_{\theta s}=U^{(\theta)}_s\) into~\eqref{eq:scaling_theta} shows that
	\(v^{(\theta)}=w-\psi^{(\theta)}-U^{(\theta)}\) satisfies~\eqref{eq:mild_solution_theta}
	with \(u_\theta\) replaced by \(w\). Thus \(w\) is a mild solution
	of~\eqref{eq:theta} on \([0,T]\) with initial datum \(u_0\) in the sense of
	Definition~\ref{def:mild_principal}; by the uniqueness part of
	Theorem~\ref{theorem:Existence_Uq}, \(w=u_\theta\), which
	gives~\eqref{eq:main_identity}, the third relation in~\eqref{eq:vtheta_scaling},
	and~\eqref{eq:mild_solution_theta}.
\end{proof}

\begin{remark}\label{rem:local_compatibility}
	Under Assumption~\ref{ass:standing}(A2) both \(u_\theta\) and \(\hat u\) are
	global, so the intervals \([0,T]\) and \([0,\theta T]\) match exactly and no
	compatibility question arises. Without the boundedness hypotheses one would
	work on \([0,T_0]\) and \([0,\theta T_0]\), where
	\(T_0=\min\{T,\widehat T/\theta\}\) and \(\widehat T\) is the existence time
	of~\eqref{eq:uhat}; the argument is otherwise unchanged.
\end{remark}

\section{Estimation of the diffusivity parameter}\label{sec:estimadores}

Everything is now in place. Theorem~\ref{theorem:Existence_Uq} provides the mild
solution \(u_\theta\) of~\eqref{eq:theta} for almost every realisation of the
driving rough path; Proposition~\ref{thm:scaling_equivalence} identifies it, up
to the time change \(t\mapsto\theta t\), with the solution \(\hat u\) of the
unit-diffusivity equation~\eqref{eq:uhat}, whose decomposition
\(\hat u=\varphi+R\), \(R:=U+v^{\varphi}\), we control: the Gaussian part
\(\varphi=\theta^{-1/2}\widetilde\psi\) has the explicit variations of
Section~\ref{sec:variaciones}, by Remark~\ref{rem:law_only}, and the remainder
\(R\) has the regularity (R1)--(R2) of Corollary~\ref{cor:R_properties}. All of
these statements hold on a single event of full probability, which we fix once
and for all; every convergence below is understood on that event.

Convention~\ref{conv:component} remains in force: the variations are those of a
single, arbitrarily fixed component of the solution, so that all quantities in
the sums are real-valued, and \(N\) denotes the number of intervals of a
partition. As in Section~\ref{sec:variaciones} we write partitions with
\(N+1\) points and sum over \(j=1,\dots,N\).

\begin{remark}[The noise intensity is assumed known]\label{rem:sigma_known}
	Throughout, \(\sigma\) is a known constant and only \(\theta\) is estimated, as
	stated in Assumption~\ref{ass:standing}(A1). This is not a limitation of the
	method. The two variations computed below converge to
	\(\sigma^{2}(A_2-A_1)/(2\theta)\) and \(3\sigma^{4}(T_2-T_1)/(\pi\theta)\),
	which depend on \((\sigma^{2},\theta)\) through the two independent
	combinations \(\sigma^{2}/\theta\) and \(\sigma^{4}/\theta\); the pair is
	therefore identifiable from a single trajectory, the ratio of the two limits
	eliminating \(\theta\). Explicitly, with \(S_{N,t}\) and \(T_{N,x}\) as
	in~\eqref{eq:S_Nt} and~\eqref{eq:T_Nx} below,
	\[
	\widehat{\sigma^{2}}_{N}
	:=\frac{\pi\,(A_2-A_1)\,T_{N,x}(u_\theta)}{6\,(T_2-T_1)\,S_{N,t}(u_\theta)},
	\qquad
	\widehat\theta_{N}
	:=\frac{\pi\,(A_2-A_1)^{2}\,T_{N,x}(u_\theta)}{12\,(T_2-T_1)\,S_{N,t}(u_\theta)^{2}},
	\]
	are strongly consistent for \(\sigma^{2}\) and \(\theta\) respectively, by
	Theorems~\ref{thm:spatial_qv_solution} and~\ref{thm:temporal_quartic_solution}
	together with the continuous mapping theorem and the positivity argument of
	Theorem~\ref{thm:spatial_estimator}. We do not pursue the joint estimation
	further, since the point of this paper is that variation-based inference is
	available at all in the rough-path setting; but the reader should keep in mind
	that assuming \(\sigma\) known costs nothing.
\end{remark}

\subsection{Spatial estimator}

Let \(t\in(0,T]\) and an interval \([A_1,A_2]\subset\mathbb{T}\) be fixed, and
suppose that the solution is observed at the points
\[
x_j=A_1+j\delta,\qquad \delta=\frac{A_2-A_1}{N},\qquad j=0,\dots,N .
\]
For a space--time function \(X\) we define its spatial quadratic variation along
this partition by
\begin{equation}\label{eq:S_Nt}
	S_{N,t}(X)=\sum_{j=1}^{N}\bigl(X(t,x_j)-X(t,x_{j-1})\bigr)^{2},
\end{equation}
and we write \(\Delta X_j:=X(t,x_j)-X(t,x_{j-1})\) for the spatial increment;
for \(X=\psi\) this is \(\Delta(x_{j-1},\delta)\) in the notation of
Section~\ref{sec:variaciones}.

\begin{theorem}[Spatial quadratic variation of the solution]\label{thm:spatial_qv_solution}
	Let \(u_\theta=\psi^{(\theta)}+U^{(\theta)}+v^{(\theta)}\) be the mild solution
	of~\eqref{eq:theta} on \([0,T]\times\mathbb{T}\), with noise intensity
	\(\sigma\), and let Convention~\ref{conv:component} be in force. Then, for every
	fixed \(t\in(0,T]\) and \([A_1,A_2]\subset\mathbb{T}\), almost surely,
	\[
	\lim_{N\to\infty}S_{N,t}(u_\theta)
	=\lim_{N\to\infty}\sum_{j=1}^{N}\bigl(u_\theta(t,x_j)-u_\theta(t,x_{j-1})\bigr)^{2}
	=\frac{\sigma^{2}}{2\theta}\,(A_2-A_1).
	\]
\end{theorem}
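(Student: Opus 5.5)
We decompose along the scaling identity of Proposition~\ref{thm:scaling_equivalence}. At the fixed time \(t\) we write \(u_\theta(t,\cdot)=\psi^{(\theta)}(t,\cdot)+R^{(\theta)}(t,\cdot)\), where \(R^{(\theta)}:=U^{(\theta)}+v^{(\theta)}\). By~\eqref{eq:vtheta_scaling},
\[
\psi^{(\theta)}(t,x)=\theta^{-1/2}\widetilde\psi(\theta t,x),\qquad R^{(\theta)}(t,x)=R(\theta t,x),\qquad R=U+v^{\varphi}.
\]
The spatial partition is unaffected by the time change. Hence \(S_{N,t}(\psi^{(\theta)})=\theta^{-1}S_{N,\theta t}(\widetilde\psi)\). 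By Remark~\ref{rem:law_only} and Proposition~\ref{prop:spatial_qv_as}, together with Remark~\ref{rem:scaling_variations}, applied to \(\widetilde\psi\) at the fixed time \(\theta t>0\), this converges almost surely to \(\sigma^{2}(A_2-A_1)/(2\theta)\). The almost sure statement about \(\widetilde\psi\) transfers from \(\psi\) because it concerns a law-determined event of a fixed countable family of sums.

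It remains to show that the remainder does not contribute. We expand
\[
S_{N,t}(u_\theta)=S_{N,t}(\psi^{(\theta)})+2\sum_{j=1}^{N}\Delta\psi^{(\theta)}_j\,\Delta R^{(\theta)}_j+S_{N,t}(R^{(\theta)}).
\]
Corollary~\ref{cor:R_properties}(R1), applied at the time \(\theta t\in(0,\theta T]\) to the rescaled equation (as justified at the end of the subsection on the rescaled equation), gives an almost surely finite Lipschitz constant \(L\) with \(|\Delta R^{(\theta)}_j|\le L\delta\). Therefore
\[
S_{N,t}(R^{(\theta)})\le L^{2}N\delta^{2}=L^{2}(A_2-A_1)^{2}/N\to0 .
\]
For the cross term we use Cauchy--Schwarz:
\[
\Bigl|\sum_{j=1}^{N}\Delta\psi^{(\theta)}_j\Delta R^{(\theta)}_j\Bigr|\le S_{N,t}(\psi^{(\theta)})^{1/2}\,S_{N,t}(R^{(\theta)})^{1/2}.
\]
The first factor is almost surely bounded, since it converges, and the second tends to \(0\). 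Combining the three terms on the common full-probability event fixed at the start of Section~\ref{sec:estimadores} yields the claim. If a component convention is needed, Remark~\ref{rem:one_component} gives \(|\Delta R^{(\theta),i}_j|\le|\Delta R^{(\theta)}_j|\).

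The only delicate point is bookkeeping rather than analysis. We must make sure that (R1) is used at the correct time \(\theta t\), which is strictly positive because \(t>0\), so that \(U\) is smooth there. We must also make sure that the Lipschitz constant, although random, is fixed before \(N\to\infty\). Finally, the almost sure convergence for \(\widetilde\psi\), which is driven by the rescaled noise \(\widetilde W\), must be legitimately inherited from that of \(\psi\) through equality in law. All three are immediate from the results cited above.
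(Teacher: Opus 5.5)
Your proposal is correct and follows the paper's proof essentially step for step: pass through the scaling identity to the decomposition \(\varphi+R\) at time \(\theta t\), obtain the Gaussian limit from Proposition~\ref{prop:spatial_qv_as} transferred to \(\widetilde\psi\) via Remark~\ref{rem:law_only}, control \(\sum_j(\Delta R_j)^{2}\) with the Lipschitz bound (R1) of Corollary~\ref{cor:R_properties}, and handle the cross term by Cauchy--Schwarz. Your bound \(L^{2}(A_2-A_1)^{2}/N\) is the correct form of the estimate the paper writes as \(L^{2}(A_2-A_1)/N\); the discrepancy is harmless, since both tend to zero.
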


\begin{proof}
	By the identity~\eqref{eq:main_identity}, \(u_\theta(t,x)=\hat u(\theta t,x)\),
	the spatial increments of \(u_\theta\) at time \(t\) are those of \(\hat u\) at
	time \(\theta t\), so
	\[
	S_{N,t}(u_\theta)=S_{N,\theta t}(\hat u),
	\]
	and it suffices to compute the spatial quadratic variation of \(\hat u\) at the
	fixed time \(\theta t\). Recall from~\eqref{eq:uhat_mild_solution} that
	\(\hat u=\varphi+R\) with \(R=U+v^{\varphi}\), so that
	\(\Delta\hat u_j=\Delta\varphi_j+\Delta R_j\). The claim is therefore
	equivalent to
	\[
	\lim_{N\to\infty}\Bigl|\sum_{j=1}^{N}(\Delta\hat u_j)^{2}
	-\sum_{j=1}^{N}(\Delta\varphi_j)^{2}\Bigr|=0
	\qquad\text{almost surely},
	\]
	since, by Proposition~\ref{prop:spatial_qv_as} applied to \(\widetilde\psi\)
	(Remark~\ref{rem:law_only}) and by Remark~\ref{rem:scaling_variations},
	\begin{equation}\label{eq:varphi_spatial_limit}
		\sum_{j=1}^{N}(\Delta\varphi_j)^{2}
		=\frac1\theta\sum_{j=1}^{N}\bigl(\widetilde\psi(\theta t,x_j)-\widetilde\psi(\theta t,x_{j-1})\bigr)^{2}
		\xrightarrow[N\to\infty]{\ \text{a.s.}\ }\frac{\sigma^{2}}{2\theta}(A_2-A_1).
	\end{equation}
	
	For real numbers \(a,b\) one has \(|(a+b)^{2}-a^{2}|=|b^{2}+2ab|\le b^{2}+2|a||b|\).
	Applying this with \(a=\Delta\varphi_j\), \(b=\Delta R_j\) and summing,
	\begin{equation}\label{eq:qv_diff_bound}
		\Bigl|\sum_{j=1}^{N}(\Delta\hat u_j)^{2}-\sum_{j=1}^{N}(\Delta\varphi_j)^{2}\Bigr|
		\le\sum_{j=1}^{N}(\Delta R_j)^{2}+2\sum_{j=1}^{N}|\Delta\varphi_j|\,|\Delta R_j| .
	\end{equation}
	
	\emph{The remainder term.} By (R1) of Corollary~\ref{cor:R_properties}, applied
	to the rescaled equation at the fixed time \(\theta t>0\), the map
	\(x\mapsto R(\theta t,x)\) is Lipschitz on \(\mathbb{T}\): there is an almost
	surely finite \(L\), independent of \(N\), with \(|\Delta R_j|\le L\delta\) for
	all \(j\). Hence
	\[
	\sum_{j=1}^{N}(\Delta R_j)^{2}\le N L^{2}\delta^{2}
	=L^{2}(A_2-A_1)\,\frac{1}{N}\xrightarrow[N\to\infty]{}0 .
	\]
	
	\emph{The mixed term.} By the Cauchy--Schwarz inequality,
	\[
	\sum_{j=1}^{N}|\Delta\varphi_j|\,|\Delta R_j|
	\le\Bigl(\sum_{j=1}^{N}(\Delta\varphi_j)^{2}\Bigr)^{1/2}
	\Bigl(\sum_{j=1}^{N}(\Delta R_j)^{2}\Bigr)^{1/2}.
	\]
	The first factor converges almost surely to the finite
	limit~\eqref{eq:varphi_spatial_limit}, hence is almost surely bounded, while the
	second tends to zero by the previous step. Both terms on the right
	of~\eqref{eq:qv_diff_bound} therefore vanish, and the theorem follows.
\end{proof}

Since Theorem~\ref{thm:spatial_qv_solution} gives
\(S_{N,t}(u_\theta)\to\sigma^{2}(A_2-A_1)/(2\theta)\), solving for \(\theta\)
leads us to define, for fixed \(t\in(0,T]\),
\begin{equation}\label{eq:theta_hat_espacial}
	\widehat\theta_{N,t}:=\frac{\sigma^{2}}{2}\,\frac{A_2-A_1}{S_{N,t}(u_\theta)} .
\end{equation}

\begin{theorem}[Strong consistency of the spatial estimator]\label{thm:spatial_estimator}
	Under Convention~\ref{conv:component}, the estimator \(\widehat\theta_{N,t}\)
	defined in~\eqref{eq:theta_hat_espacial} is strongly consistent:
	\(\widehat\theta_{N,t}\to\theta\) almost surely as \(N\to\infty\).
\end{theorem}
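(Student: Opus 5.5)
The plan is to deduce the statement from Theorem~\ref{thm:spatial_qv_solution} by the continuous mapping theorem applied to the map \(y\mapsto \frac{\sigma^{2}}{2}(A_2-A_1)/y\). This map is continuous on \((0,\infty)\), and the limit \(\ell:=\frac{\sigma^{2}}{2\theta}(A_2-A_1)\) is strictly positive because \(\sigma>0\), \(\theta>0\) and \(A_2>A_1\). So the argument has essentially one step, preceded by a verification that the estimator is well defined.

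\emph{Well-definedness.} Fix the full-probability event of Theorem~\ref{thm:spatial_qv_solution}, on which \(S_{N,t}(u_\theta)\to\ell\). On this event there is a (random) \(N_0\) such that \(S_{N,t}(u_\theta)\ge \ell/2>0\) for all \(N\ge N_0\). Hence \(\widehat\theta_{N,t}\) is a finite real number for \(N\ge N_0\). For the finitely many \(N<N_0\), where \(S_{N,t}(u_\theta)\) might vanish, the value of \(\widehat\theta_{N,t}\) is irrelevant to the limit; it can be assigned an arbitrary value, say \(+\infty\) or \(0\). This small positivity argument is the only point needing care, and it is what Remark~\ref{rem:sigma_known} refers to as ``the positivity argument''.

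\emph{Convergence.} For \(N\ge N_0\),
\[
\bigl|\widehat\theta_{N,t}-\theta\bigr|
=\frac{\sigma^{2}(A_2-A_1)}{2}\,\frac{|\ell-S_{N,t}(u_\theta)|}{\ell\,S_{N,t}(u_\theta)}
\le\frac{\sigma^{2}(A_2-A_1)}{\ell^{2}}\,\bigl|S_{N,t}(u_\theta)-\ell\bigr|,
\]
using the identity \(\frac{\sigma^{2}(A_2-A_1)}{2\ell}=\theta\). The right-hand side tends to \(0\) on the fixed event, which gives \(\widehat\theta_{N,t}\to\theta\) almost surely. Both the null set and \(N_0\) depend on the observed trajectory but not on \(\theta\)-dependent objects beyond the event already fixed at the start of Section~\ref{sec:estimadores}, so no further probabilistic input is needed. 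I expect no real obstacle here: all the analytic difficulty was absorbed into Theorem~\ref{thm:spatial_qv_solution}.
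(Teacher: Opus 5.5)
Your proof is correct and follows the paper's route. You use Theorem~\ref{thm:spatial_qv_solution} to obtain a random \(N_0\) beyond which \(S_{N,t}(u_\theta)\ge \ell/2\), and then bound \(|\widehat\theta_{N,t}-\theta|\) by a constant times \(|S_{N,t}(u_\theta)-\ell|\); your constant \(\sigma^{2}(A_2-A_1)/\ell^{2}\) equals the paper's \(2\theta/S_\infty\), and your remark on the finitely many \(N<N_0\) makes explicit what the paper leaves implicit.
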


\begin{proof}
	Set \(S_\infty:=\sigma^{2}(A_2-A_1)/(2\theta)>0\). A direct computation gives
	\[
	\widehat\theta_{N,t}-\theta
	=\theta\,\frac{S_\infty-S_{N,t}(u_\theta)}{S_{N,t}(u_\theta)} .
	\]
	By Theorem~\ref{thm:spatial_qv_solution}, \(S_{N,t}(u_\theta)\to S_\infty\)
	almost surely, so for almost every \(\omega\) there is \(N_0(\omega)\) with
	\(S_{N,t}(u_\theta)\ge S_\infty/2\) for all \(N\ge N_0(\omega)\). On this event
	\[
	|\widehat\theta_{N,t}-\theta|\le\frac{2\theta}{S_\infty}\,\bigl|S_\infty-S_{N,t}(u_\theta)\bigr|,
	\]
	and the right-hand side tends to zero almost surely by
	Theorem~\ref{thm:spatial_qv_solution}.
\end{proof}

\subsection{Temporal estimator}

Similarly, let \(x\in\mathbb{T}\) and a time interval \([T_1,T_2]\subset(0,T]\)
with \(T_1>0\) be fixed, and suppose that the solution is observed at the times
\[
t_j=T_1+j\delta,\qquad \delta=\frac{T_2-T_1}{N},\qquad j=0,\dots,N .
\]
Throughout this subsection \([T_1,T_2]\) is a time interval; \([A_1,A_2]\) was
reserved for the spatial interval of the previous one. For a space--time
function \(X\) we define its temporal quartic variation along this partition by
\begin{equation}\label{eq:T_Nx}
	T_{N,x}(X)=\sum_{j=1}^{N}\bigl(X(t_j,x)-X(t_{j-1},x)\bigr)^{4},
\end{equation}
and we write \(\Gamma X_j:=X(t_j,x)-X(t_{j-1},x)\) for the temporal increment;
for \(X=\psi\) this is \(\Gamma(t_{j-1},\delta)\) in the notation of
Section~\ref{sec:variaciones}.

\begin{remark}[The time change and the partition]\label{rem:temporal_mesh}
	Recall that \(u_\theta\) lives on \([0,T]\times\mathbb{T}\) while \(\hat u\)
	lives on \([0,\theta T]\times\mathbb{T}\). By~\eqref{eq:main_identity},
	\(u_\theta(t,x)=\hat u(\theta t,x)\), so the increments of \(u_\theta\) over the
	partition \(t_j=T_1+j\delta\) of \([T_1,T_2]\) coincide with those of \(\hat u\)
	over the rescaled partition \(s_j:=\theta t_j\) of \([\theta T_1,\theta T_2]\),
	whose mesh is \(\theta\delta\). We write
	\[
	T_{N,x}(u_\theta)=T^{\theta}_{N,x}(\hat u),
	\]
	the right-hand side denoting the quartic variation of \(\hat u\) at the same
	spatial point \(x\), over \([\theta T_1,\theta T_2]\), along
	\((s_j)_{j=0}^{N}\). The spatial point is unaffected by the time change, and the
	length of the interval on which \(\hat u\) is observed is \(\theta(T_2-T_1)\);
	this factor is what distinguishes the constants below from those of
	Section~\ref{sec:variaciones}.
\end{remark}

\begin{theorem}[Temporal quartic variation of the solution]\label{thm:temporal_quartic_solution}
	Let \(u_\theta=\psi^{(\theta)}+U^{(\theta)}+v^{(\theta)}\) be the mild solution
	of~\eqref{eq:theta} on \([0,T]\times\mathbb{T}\), with noise intensity
	\(\sigma\), and let Convention~\ref{conv:component} be in force. Then, for every
	fixed \(x\in\mathbb{T}\) and \([T_1,T_2]\subset(0,T]\) with \(T_1>0\), almost
	surely,
	\[
	\lim_{N\to\infty}T_{N,x}(u_\theta)
	=\lim_{N\to\infty}\sum_{j=1}^{N}\bigl(u_\theta(t_j,x)-u_\theta(t_{j-1},x)\bigr)^{4}
	=\frac{3\sigma^{4}}{\pi\theta}\,(T_2-T_1).
	\]
\end{theorem}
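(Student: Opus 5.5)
The argument follows the proof of Theorem~\ref{thm:spatial_qv_solution}, with the quartic power in place of the square and temporal H\"older regularity in place of spatial Lipschitz regularity.

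\textbf{Reduction to $\hat u$ and the Gaussian limit.} By Remark~\ref{rem:temporal_mesh}, $T_{N,x}(u_\theta)=T^{\theta}_{N,x}(\hat u)$. The right-hand side is the quartic variation of $\hat u=\varphi+R$, with $R=U+v^{\varphi}$, at the point $x$, along the partition $s_j=\theta t_j$ of $[\theta T_1,\theta T_2]$, whose mesh is $\theta\delta$. For the Gaussian part, $\varphi=\theta^{-1/2}\widetilde\psi$, so that
\[
\sum_{j=1}^{N}(\Gamma\varphi_j)^{4}=\theta^{-2}\sum_{j=1}^{N}\bigl(\widetilde\psi(s_j,x)-\widetilde\psi(s_{j-1},x)\bigr)^{4}.
\]
By Proposition~\ref{prop:AS_temp_quartic}, Remark~\ref{rem:scaling_variations} and Remark~\ref{rem:law_only}, applied on the interval $[\theta T_1,\theta T_2]$ with $0<\theta T_1$, this converges almost surely to
\[
\theta^{-2}\cdot\frac{3\sigma^{4}}{\pi}\,\theta(T_2-T_1)=\frac{3\sigma^{4}}{\pi\theta}(T_2-T_1).
\]
Stationarity of $\widetilde\psi$ makes the shift of the interval harmless.

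\textbf{Controlling the perturbation.} It remains to show that $\sum_j(a_j+b_j)^4-\sum_j a_j^4\to0$ almost surely, where $a_j=\Gamma\varphi_j$ and $b_j=\Gamma R_j$. We use the elementary bound
\[
|(a+b)^4-a^4|\le C\bigl(|a|^3|b|+|a|^2b^2+|a|\,|b|^3+b^4\bigr).
\]
By (R2) of Corollary~\ref{cor:R_properties}, applied to the rescaled equation on $[\theta T_1,\theta T_2]\subset(0,\theta T]$, there are $\rho>\tfrac12$ and an almost surely finite $L$ with $|b_j|\le L(\theta\delta)^{\rho}$. Hence
\[
\sum_j b_j^4\le N L^4(\theta\delta)^{4\rho}=O\bigl(N^{1-4\rho}\bigr)\to0.
\]
Any $\rho>\tfrac14$ would already suffice for this term. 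Each mixed term $\sum_j|a_j|^k|b_j|^{4-k}$, for $k=1,2,3$, is handled by H\"older's inequality with exponents $4/k$ and $4/(4-k)$:
\[
\sum_j|a_j|^k|b_j|^{4-k}\le\Bigl(\sum_j a_j^4\Bigr)^{k/4}\Bigl(\sum_j b_j^4\Bigr)^{(4-k)/4}.
\]
The first factor converges almost surely to a finite limit by the previous paragraph, and the second tends to zero. Every term therefore vanishes on the fixed full-probability event, and the theorem follows.

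\textbf{Main subtlety.} The H\"older constant $L$ in (R2) must be uniform over the partition. It depends on $\omega$, $T_1$ and $T_2$, but not on $N$ or $j$, and Proposition~\ref{prop:regularity_v} provides exactly this uniformity. The correct bookkeeping of the factor $\theta$ from the rescaled mesh and interval length has already been carried out above, so this step needs no further care beyond what the earlier results supply.
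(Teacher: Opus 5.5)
Your proposal is correct and follows essentially the same route as the paper's proof. Both reduce to $\hat u$ via the time change and obtain the Gaussian limit from Proposition~\ref{prop:AS_temp_quartic} with the $\theta^{-2}\cdot\theta$ bookkeeping. Both then use (R2) to get $\sum_j(\Gamma R_j)^4=O(N^{1-4\rho})$ and dispose of the three mixed terms by H\"older's inequality with exponents $(4/k,\,4/(4-k))$.
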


\begin{proof}
	By Remark~\ref{rem:temporal_mesh} it suffices to compute
	\(T^{\theta}_{N,x}(\hat u)\), the quartic variation of \(\hat u\) over
	\([\theta T_1,\theta T_2]\) along \(s_j=\theta t_j\). Recall
	\(\hat u=\varphi+R\) with \(R=U+v^{\varphi}\); writing
	\(\Gamma\varphi_j=\varphi(s_j,x)-\varphi(s_{j-1},x)\) and
	\(\Gamma R_j=R(s_j,x)-R(s_{j-1},x)\), so that
	\(\Gamma\hat u_j=\Gamma\varphi_j+\Gamma R_j\), and expanding the fourth power,
	\begin{align}
		T^{\theta}_{N,x}(\hat u)
		&=\sum_{j=1}^{N}(\Gamma\varphi_j+\Gamma R_j)^{4}
		=T^{\theta}_{N,x}(\varphi)\notag\\
		&\quad+4\sum_{j=1}^{N}(\Gamma\varphi_j)^{3}(\Gamma R_j)\label{alig:1}\\
		&\quad+6\sum_{j=1}^{N}(\Gamma\varphi_j)^{2}(\Gamma R_j)^{2}\label{alig:2}\\
		&\quad+4\sum_{j=1}^{N}(\Gamma\varphi_j)(\Gamma R_j)^{3}\label{alig:3}\\
		&\quad+T^{\theta}_{N,x}(R).\label{alig:4}
	\end{align}
	
	\emph{Leading term.} Since \(\varphi=\theta^{-1/2}\widetilde\psi\), the quartic
	variation of \(\varphi\) over an interval of length \(\ell\) is \(\theta^{-2}\)
	times that of \(\widetilde\psi\), namely \(\theta^{-2}\cdot3\sigma^{4}\ell/\pi\)
	by Proposition~\ref{prop:AS_temp_quartic}, which applies to \(\widetilde\psi\)
	by Remark~\ref{rem:law_only}, and by Remark~\ref{rem:scaling_variations}. Here
	\(\ell=\theta(T_2-T_1)\), so
	\begin{equation}\label{eq:varphi_temporal_limit}
		T^{\theta}_{N,x}(\varphi)
		\xrightarrow[N\to\infty]{\ \text{a.s.}\ }
		\frac{3\sigma^{4}}{\pi\theta^{2}}\cdot\theta\,(T_2-T_1)
		=\frac{3\sigma^{4}}{\pi\theta}\,(T_2-T_1).
	\end{equation}
	
	\emph{Remainder term.} By (R2) of Corollary~\ref{cor:R_properties}, applied to
	the rescaled equation on the interval \([\theta T_1,\theta T_2]\subset(0,\theta T]\),
	the map \(s\mapsto R(s,x)\) is \(\rho\)-Hölder there for some \(\rho>\tfrac12\):
	there is an almost surely finite \(C_R\), independent of \(N\), with
	\(|\Gamma R_j|\le C_R(\theta\delta)^{\rho}\) for all \(j\). Hence
	\[
	T^{\theta}_{N,x}(R)=\sum_{j=1}^{N}|\Gamma R_j|^{4}
	\le C_R^{4}\,N\,(\theta\delta)^{4\rho}
	=C_R^{4}\,\theta^{4\rho}\,(T_2-T_1)\,\delta^{4\rho-1}
	\xrightarrow[N\to\infty]{}0,
	\]
	since \(4\rho-1>1>0\).
	
	\emph{Mixed terms.} By Hölder's inequality with exponents \((4/3,4)\), \((2,2)\)
	and \((4,4/3)\) respectively,
	\begin{align*}
		\Bigl|\sum_{j=1}^{N}(\Gamma\varphi_j)^{3}(\Gamma R_j)\Bigr|
		&\le\bigl(T^{\theta}_{N,x}(\varphi)\bigr)^{3/4}\bigl(T^{\theta}_{N,x}(R)\bigr)^{1/4},\\
		\Bigl|\sum_{j=1}^{N}(\Gamma\varphi_j)^{2}(\Gamma R_j)^{2}\Bigr|
		&\le\bigl(T^{\theta}_{N,x}(\varphi)\bigr)^{1/2}\bigl(T^{\theta}_{N,x}(R)\bigr)^{1/2},\\
		\Bigl|\sum_{j=1}^{N}(\Gamma\varphi_j)(\Gamma R_j)^{3}\Bigr|
		&\le\bigl(T^{\theta}_{N,x}(\varphi)\bigr)^{1/4}\bigl(T^{\theta}_{N,x}(R)\bigr)^{3/4}.
	\end{align*}
	In each case the first factor is almost surely bounded, since
	\(T^{\theta}_{N,x}(\varphi)\) converges by~\eqref{eq:varphi_temporal_limit}, and
	the second tends to zero by the previous step. Hence the
	terms~\eqref{alig:1}--\eqref{alig:4} all vanish almost surely, and
	\[
	T^{\theta}_{N,x}(\hat u)\xrightarrow[N\to\infty]{\ \text{a.s.}\ }\frac{3\sigma^{4}}{\pi\theta}\,(T_2-T_1),
	\]
	which by Remark~\ref{rem:temporal_mesh} is the assertion.
\end{proof}

Since Theorem~\ref{thm:temporal_quartic_solution} gives
\(T_{N,x}(u_\theta)\to3\sigma^{4}(T_2-T_1)/(\pi\theta)\), solving for \(\theta\)
leads us to define, for fixed \(x\in\mathbb{T}\),
\begin{equation}\label{eq:theta_hat_temporal}
	\widehat\theta_{N,x}:=\frac{3\sigma^{4}}{\pi}\,\frac{T_2-T_1}{T_{N,x}(u_\theta)} .
\end{equation}

\begin{theorem}[Strong consistency of the temporal estimator]\label{thm:temporal_estimator}
	Under Convention~\ref{conv:component}, the estimator \(\widehat\theta_{N,x}\)
	defined in~\eqref{eq:theta_hat_temporal} is strongly consistent:
	\(\widehat\theta_{N,x}\to\theta\) almost surely as \(N\to\infty\).
\end{theorem}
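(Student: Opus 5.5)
The argument mirrors the proof of Theorem~\ref{thm:spatial_estimator}, with Theorem~\ref{thm:temporal_quartic_solution} in place of Theorem~\ref{thm:spatial_qv_solution}. Set
\[
T_\infty:=\frac{3\sigma^{4}}{\pi\theta}\,(T_2-T_1),
\]
which is strictly positive because \(\sigma>0\), \(\theta>0\) and \(T_2>T_1\). By the definition~\eqref{eq:theta_hat_temporal},
\[
\widehat\theta_{N,x}=\theta\,\frac{T_\infty}{T_{N,x}(u_\theta)} ,
\]
and therefore
\[
\widehat\theta_{N,x}-\theta=\theta\,\frac{T_\infty-T_{N,x}(u_\theta)}{T_{N,x}(u_\theta)} .
\]

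First I will make sure the estimator is well defined and the denominator stays away from zero. Theorem~\ref{thm:temporal_quartic_solution} gives \(T_{N,x}(u_\theta)\to T_\infty\) almost surely. Hence, on the full-probability event fixed at the start of Section~\ref{sec:estimadores}, there is \(N_0(\omega)\) such that \(T_{N,x}(u_\theta)\ge T_\infty/2>0\) for all \(N\ge N_0(\omega)\). For such \(N\) the quotient is well defined; for finitely many small \(N\) the sum of fourth powers could vanish, but this is irrelevant for the limit.

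On this event and for \(N\ge N_0(\omega)\), we get
\[
|\widehat\theta_{N,x}-\theta|\le\frac{2\theta}{T_\infty}\,\bigl|T_\infty-T_{N,x}(u_\theta)\bigr| .
\]
The right-hand side tends to zero almost surely, again by Theorem~\ref{thm:temporal_quartic_solution}. Equivalently, one can apply the continuous mapping theorem to \(y\mapsto 3\sigma^{4}(T_2-T_1)/(\pi y)\), which is continuous at \(T_\infty>0\).

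There is no genuine obstacle here. All the analytic work, namely the almost sure quartic variation of \(\varphi\) and the insensitivity of that variation to the remainder \(R\) through (R2), has already been absorbed into Theorem~\ref{thm:temporal_quartic_solution}. The only point needing care is that the limit \(T_\infty\) is strictly positive, and this is what allows inverting the limit.
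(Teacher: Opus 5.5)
Your proposal is correct and follows the paper's proof, which simply repeats the argument of Theorem~\ref{thm:spatial_estimator} with Theorem~\ref{thm:temporal_quartic_solution} in place of Theorem~\ref{thm:spatial_qv_solution} and \(T_\infty=3\sigma^{4}(T_2-T_1)/(\pi\theta)>0\) in place of \(S_\infty\). You write out that argument explicitly: the identity \(\widehat\theta_{N,x}=\theta\,T_\infty/T_{N,x}(u_\theta)\), the eventual bound \(T_{N,x}(u_\theta)\ge T_\infty/2\), and the estimate \(|\widehat\theta_{N,x}-\theta|\le (2\theta/T_\infty)\,|T_\infty-T_{N,x}(u_\theta)|\).
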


\begin{proof}
	Identical to that of Theorem~\ref{thm:spatial_estimator}, with
	Theorem~\ref{thm:temporal_quartic_solution} in place of
	Theorem~\ref{thm:spatial_qv_solution} and
	\(T_\infty:=3\sigma^{4}(T_2-T_1)/(\pi\theta)>0\) in place of \(S_\infty\).
\end{proof}

\begin{remark}[On a central limit theorem]\label{rem:no_clt}
	It is natural to ask whether the estimators above satisfy a central limit
	theorem. The obstruction is quantitative and worth recording. Writing
	\(T_{N,x}(\hat u)=T_{N,x}(\varphi)+P_{N,x}\), where \(P_{N,x}\) collects the
	terms~\eqref{alig:1}--\eqref{alig:4}, the dominant contribution
	is~\eqref{alig:1}, and by Hölder's inequality as above,
	\[
	\Bigl|\sum_{j=1}^{N}(\Gamma\varphi_j)^{3}\Gamma R_j\Bigr|
	\le\bigl(T_{N,x}(\varphi)\bigr)^{3/4}\bigl(T_{N,x}(R)\bigr)^{1/4}.
	\]
	If \(R\) is \(\rho\)-Hölder in time, then \(T_{N,x}(R)=O(N^{1-4\rho})\) by the
	computation in the proof of Theorem~\ref{thm:temporal_quartic_solution}, so
	that
	\[
	\sqrt N\,\Bigl|\sum_{j=1}^{N}(\Gamma\varphi_j)^{3}\Gamma R_j\Bigr|
	=O\bigl(N^{\frac{3-4\rho}{4}}\bigr),
	\]
	which tends to zero only if \(\rho>\tfrac34\). Proposition~\ref{prop:regularity_v}
	gives every \(\rho<(1+\alpha)/2\), and \((1+\alpha)/2<\tfrac34\) because
	\(\alpha<\tfrac12\): the remainder is smooth enough for the law of large numbers
	behind Theorem~\ref{thm:temporal_quartic_solution}, but not for the
	fluctuations. A central limit theorem would therefore require either a sharper
	description of \(P_{N,x}\) than the pathwise bound used here, or an analogue for
	\(\widetilde\psi\) of the fourth-moment results of~\cite{CimpNachTudor}. We leave
	this question open; see also Remark~\ref{rem:one_component} on the role of the
	number of components.
\end{remark}

\begin{appendices}
	
	\noindent The appendices collect the proofs of three technical results used in
	the paper. Appendix~\ref{appendix:A} proves Lemma~\ref{lem:heat_prop}, the scaling
	identities for the heat kernel that feed Proposition~\ref{prop:hk_result};
	Appendix~\ref{appendix:B} bounds the deterministic integrals appearing in the
	proof of Proposition~\ref{prop:regularity_v}; Appendix~\ref{appendix:C}
	establishes the properties of the covariance function \(\Upsilon\) used in the
	temporal computations of Section~\ref{sec:variaciones}.
	
	\section{Proof of Lemma~\ref{lem:heat_prop}}\label{appendix:A}
	
	To present the proof of Lemma~\ref{lem:heat_prop}, we first state and prove several auxiliary results.
	
	\begin{lemma}\label{lem:F-norm-finite}
		Let \(F:\mathbb{R}\longrightarrow\mathbb{R}\) be the \(C^{\infty}\) function given by
		\[
		F(x) = \frac{1}{\sqrt{4\pi}} e^{-\frac{x^2}{4}}.
		\]
		For every integer \(\ell \ge 0\), let \(F^{(\ell)}\) denote the \(\ell\)-th derivative of \(F\). Then
		\[
		\|F^{(\ell)}\|_{1,1} < \infty.
		\]
	\end{lemma}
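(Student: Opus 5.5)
The plan is to reduce the claim to an elementary Gaussian tail estimate. I will recall the definition~\eqref{eq:norm11} of \(\|\cdot\|_{1,1}\) and write
\[
\|F^{(\ell)}\|_{1,1}=\sum_{m\in\mathbb{Z}}\sup_{0\le s\le 2\pi}\bigl(|F^{(\ell)}(2\pi m+s)|+|F^{(\ell+1)}(2\pi m+s)|\bigr).
\]
Since \(\ell\) is arbitrary, it suffices to show that for every integer \(k\ge0\) the series \(\sum_{m}\sup_{s\in[0,2\pi]}|F^{(k)}(2\pi m+s)|\) converges. Applying this with \(k=\ell\) and \(k=\ell+1\) then gives the lemma.

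The first step is to obtain the structure of the derivatives. By induction on \(k\), \(F^{(k)}(x)=P_k(x)\,e^{-x^{2}/4}\), where \(P_k\) is a polynomial of degree \(k\) (a rescaled Hermite polynomial). The inductive step is \(P_{k+1}=P_k'-\tfrac{x}{2}P_k\), with \(P_0=(4\pi)^{-1/2}\). Consequently there are constants \(C_k\) such that
\[
|F^{(k)}(x)|\le C_k(1+|x|)^{k}e^{-x^{2}/4}\le C_k'\,e^{-x^{2}/8}\qquad\text{for all }x\in\mathbb{R},
\]
because \((1+|x|)^{k}e^{-x^{2}/8}\) is bounded.

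The second step is to bound the suprema block by block. For \(m\ge1\) and \(s\in[0,2\pi]\) we have \(|2\pi m+s|\ge2\pi m\). For \(m\le-2\) we have \(|2\pi m+s|\ge2\pi(|m|-1)\). The finitely many blocks \(m\in\{-1,0\}\) are bounded by \(\sup_{\mathbb{R}}|F^{(k)}|<\infty\). Hence
\[
\sum_{m}\sup_{s}|F^{(k)}(2\pi m+s)|\le 2\sup_{\mathbb{R}}|F^{(k)}|+2C_k'\sum_{j\ge1}e^{-\pi^{2}j^{2}/2}<\infty .
\]

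There is no real obstacle. The only point needing care is that the blocks with \(m\) negative near zero straddle the origin, so the tail lower bound \(|2\pi m+s|\gtrsim|m|\) must be shifted by one index, as done above. The Gaussian then dominates any polynomial growth.
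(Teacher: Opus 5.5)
Your proof is correct and follows essentially the same route as the paper. Both write \(F^{(k)}=P_k\,e^{-x^{2}/4}\), bound the polynomial factor, and use the lower bound \(|2\pi m+s|\ge 2\pi(|m|-1)\) to dominate each block by a summable Gaussian tail. The only difference is cosmetic: you absorb the polynomial into half of the Gaussian exponent (getting \(e^{-x^{2}/8}\)) before summing, whereas the paper keeps the factor \((1+|m|^{\ell+1})\) explicit against \(e^{-\pi^{2}(|m|-1)^{2}}\).
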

	
	\begin{proof}
		For each \(\ell\ge0\), there exists a polynomial \(P_\ell\) of degree \(\ell\) such that
		\[
		F^{(\ell)}(x)=P_\ell(x)\, e^{-\frac{x^2}{4}}.
		\]
		We shall use the elementary polynomial bound
		\begin{equation}\label{eq:polynomial-bound}
			|P_\ell(x)|\leq C_\ell \bigl(1+|x|^\ell\bigr),\qquad x\in\mathbb R,
		\end{equation}
		for some constant \(C_\ell>0\).
		
		Now fix \(\ell\ge0\). By definition of the norm \(\|\cdot\|_{1,1}\),
		\[
		\|F^{(\ell)}\|_{1,1}=\sum_{m \in \mathbb{Z}}\sup_{0 \le s \le 2\pi}\left(|F^{(\ell)}(2\pi m+s)|+|F^{(\ell+1)}(2\pi m+s)|\right).
		\]
		For \(|m| \ge 1\), we have
		\begin{equation}\label{eq:Ap_A_lower}
			|2\pi m+s|\ge|2\pi m| - |s|\ge2\pi(|m| - 1)\ge0,
		\end{equation}
		and also
		\begin{equation}\label{eq:Ap_A_upper}
			|2\pi m+s|\le2\pi|m|+2\pi\le2\pi(|m|+1).
		\end{equation}
		Hence, applying \eqref{eq:polynomial-bound}  we obtain
		\[
		|F^{(\ell)}(2\pi m+s)| + |F^{(\ell+1)}(2\pi m+s)|\leq C_\ell\bigl(1+|m|^{\ell+1}\bigr)e^{-\pi^2(|m|-1)^2},
		\]
		for some \(C_\ell>0\) independent of \(m\) and \(s\). Consequently,
		\[
		\|F^{(\ell)}\|_{1,1}\leq C_\ell\sum_{|m| \ge 1}\left( (1+|m|^{\ell+1}) e^{-\pi^2(|m|-1)^2}\right)+\mathcal{C^{(\ell)}},
		\]
		where
		\[
		\mathcal{C^{(\ell)}}:=\sup_{0 \le s \le 2\pi}\left(|F^{(\ell)}(s)| + |F^{(\ell+1)}(s)|\right),
		\] 
		accounts for the contribution of the term \(m=0\). The exponential decay ensures that the series converges. Therefore
		\[
		\boxed{\|F^{(\ell)}\|_{1,1} < \infty.}
		\]
	\end{proof}

	The proof relies on the following localization. For \(t>0\), set
	\begin{equation}\label{eq:Ap_A_compact}
		K_t:=\left[-\frac{\pi}{\sqrt t},\frac{\pi}{\sqrt t}\right].
	\end{equation}
	If \(f_t\) is defined on \(K_t\) by
	\[
	f_t(y)=\sqrt t\,p_t(\sqrt t\,y),\qquad y\in K_t,
	\]
	then for every \(x\in[-\pi,\pi]\), the rescaled variable \(y=x/\sqrt t\) belongs to \(K_t\), and therefore
	\[
	\frac1{\sqrt t}\,f_t\!\left(\frac{x}{\sqrt t}\right)=p_t(x).
	\]
	Thus it is enough to define \(f_t\) on \(K_t\). For convenience, we work on \([-\pi,\pi]\) instead of \([0,2\pi]\); by \(2\pi\)-periodicity of the heat kernel, estimates on \([-\pi,\pi]\) extend to any interval of length \(2\pi\), in particular to \([0,2\pi]\).

	Observe that \(K_t\) expands as \(t\to0^{+}\), but remains compact for each fixed \(t\in(0,1]\). Accordingly, we consider smooth cut-off functions \(\{\chi_t\}_{t\in(0,1]}\) satisfying the following properties:
	\begin{enumerate}
		\item \(\chi_t\in C_c^\infty(\mathbb R)\);
		\item \(0\le \chi_t\le 1\);
		\item \(\chi_t\equiv 1\) on \(K_t\);
		\item \(\operatorname{supp}(\chi_t)\subset \left[-\dfrac{2\pi}{\sqrt t},\dfrac{2\pi}{\sqrt t}\right]\);
		\item \(\chi_t\left(\pm\dfrac{2\pi}{\sqrt t}\right)=0\);
		\item for every \(k\ge0\), there exists a constant \(C_k>0\), independent
		of \(t\in(0,1]\), such that
		\[
		\|\chi_t^{(k)}\|_\infty\le C_k.
		\]
	\end{enumerate}
	
	The construction of such cut-off functions is standard; we omit the details.
	
	\begin{remark}\label{rem:Ap_A_supp}
		Let \(m\in\mathbb Z\) and \(s\in[0,2\pi]\). If
		\[
		|m|>1+\frac{1}{\sqrt t} \quad \text{then} \quad
		|2\pi m+s|\ge2\pi |m|-|s|>2\pi\Bigl(1+\frac1{\sqrt t}\Bigr)-2\pi=\frac{2\pi}{\sqrt t} .
		\]
		Therefore
		\[
		2\pi m+s\notin \operatorname{supp}(\chi_t),
		\]
		and hence
		\[
		\chi_t(2\pi m+s)=0.
		\]
		Equivalently, the only possibly nonzero terms involving \(\chi_t\) occur when
		\[
		|m|\le 1+\frac{1}{\sqrt t}.
		\]
	\end{remark}
	
	We next prove an auxiliary convergence lemma.
	
	\begin{lemma}\label{lem:gaussian-tail}
		For every integer \(r \ge 0\), there exists a constant \(C_r > 0\) such that
		\[
		\sum_{k=4}^{\infty} (k+3)^r e^{-\frac{\pi^2}{t}(k-3)^2} \leq C_r\, t^{\frac{r+1}{2}},\qquad \text{for all } t \in (0,1].
		\]
	\end{lemma}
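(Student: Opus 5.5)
The plan is to compare the series with a geometric-type or integral bound after isolating the Gaussian factor. Set \(j=k-3\ge1\), so the sum becomes
\[
\sum_{j=1}^{\infty}(j+6)^{r}e^{-\frac{\pi^{2}}{t}j^{2}}.
\]
Since \(j\ge1\), we have \(j+6\le7j\), hence \((j+6)^r\le 7^r j^r\). It therefore suffices to bound \(S_r(t):=\sum_{j\ge1}j^{r}e^{-\pi^{2}j^{2}/t}\) by \(C\,t^{(r+1)/2}\) uniformly in \(t\in(0,1]\).

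For this we use the elementary inequality \(j^2\ge \tfrac12 j^2+\tfrac12 j\), valid for \(j\ge1\), to split the exponent:
\[
e^{-\pi^2 j^2/t}\le e^{-\pi^2 j^2/(2t)}\,e^{-\pi^2 j/(2t)}.
\]
The function \(y\mapsto y^{r}e^{-\pi^{2}y^{2}/(2t)}\) equals \(t^{r/2}\) times a function of \(y/\sqrt t\). Hence
\[
\sup_{y>0}y^{r}e^{-\pi^{2}y^{2}/(2t)}=t^{r/2}\sup_{z>0}z^{r}e^{-\pi^{2}z^{2}/2}=:c_r\,t^{r/2}.
\]
This gives
\[
S_r(t)\le c_r\,t^{r/2}\sum_{j\ge1}e^{-\pi^{2}j/(2t)}=c_r\,t^{r/2}\,\frac{e^{-\pi^{2}/(2t)}}{1-e^{-\pi^{2}/(2t)}}.
\]
On \(t\in(0,1]\) the denominator is at least \(1-e^{-\pi^{2}/2}>0\). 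Moreover, \(e^{-\pi^{2}/(2t)}\le C\,t^{1/2}\) since \(\sup_{t>0}t^{-1/2}e^{-\pi^{2}/(2t)}<\infty\). Combining these gives \(S_r(t)\le C_r t^{(r+1)/2}\), which proves the claim with \(C_r\) depending only on \(r\).

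There is no genuine obstacle here. The only point needing care is that the power of \(t\) must be extracted uniformly in \(j\). The split of the exponent handles this: half of the Gaussian absorbs the polynomial factor at the scale \(\sqrt t\), and the other half produces a summable geometric series whose total is exponentially small in \(1/t\). In fact the bound is far from sharp, since the sum is \(O(e^{-\pi^2/(2t)})\), and any power of \(t\) would do.
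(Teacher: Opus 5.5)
Your proof is correct and follows the paper's argument: the same reindexing $j=k-3$ with $(j+6)^r\le 7^r j^r$, and the same step of using half of the Gaussian to absorb $j^r$ at scale $\sqrt t$. The only difference is the last step, where the paper bounds $\sum_{j\ge1}e^{-\pi^2 j^2/(2t)}$ by the integral $\int_0^\infty e^{-\pi^2x^2/(2t)}\,dx=\sqrt{t/(2\pi)}$, which gives the factor $t^{1/2}$ directly, whereas you use $j^2\ge j$ to get a geometric series whose exponentially small sum you then dominate by $Ct^{1/2}$.
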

	
	\begin{proof}
		Set \(a := \pi^2/t\). Since \(t \in (0,1]\), we have \(a \ge \pi^2 > 0\).  After the change of index \(j = k-3\), we have \(j\ge1\) and \(k+3 = j+6 \le 7j\). Hence,
		\[
		\sum_{k=4}^{\infty}(k+3)^r e^{-\frac{\pi^2}{t}(k-3)^2}\leq 7^r\sum_{j=1}^{\infty}j^r e^{-a j^2},
		\]
		Thus, it remains to estimate \(S_r(a) := \sum_{j=1}^{\infty} j^r e^{-a j^2}\).
		
		For \(x\ge0\), consider \(\varphi_r(x) = x^r e^{-a x^2/2}\). If \(r=0\), then \(\varphi_0(x)\le1\). If \(r\ge1\), the function \(\varphi_r\) attains its maximum at \(x_* = \sqrt{r/a}\), and its maximum value satisfies \(\varphi_r(x_*) \le C_r a^{-r/2}\). 
		Consequently, for all \(x\ge0\), we have \( x^r e^{-a x^2/2} \le C_r a^{-r/2}\). Applying this estimate at \(x=j\) yields
		\[
		j^r e^{-a j^2}=\left(j^r e^{-a j^2/2}\right) e^{-a j^2/2}\leq C_r a^{-r/2} e^{-a j^2/2}.
		\]
		Summing over \(j\ge1\) and using the integral test for the decreasing function \(x\mapsto e^{-a x^2/2}\), we get
		\[
		\sum_{j=1}^{\infty} e^{-a j^2/2}\leq \int_0^\infty e^{-a x^2/2}\,dx =\sqrt{\frac{\pi}{2a}}.
		\]
		Combining the preceding bounds, since \(a = \pi^2/t\), it follows that \(a^{-(r+1)/2} = \pi^{-(r+1)} t^{(r+1)/2}\). Absorbing the constants that depend only on \(r\) into \(C_r\), we arrive at
		\[
		\boxed{\sum_{k=4}^{\infty} (k+3)^r e^{-\frac{\pi^2}{t}(k-3)^2}\leq C_r t^{\frac{r+1}{2}},\qquad t\in(0,1].}
		\]
		
	\end{proof}
	
	\noindent Finally, we introduce the following auxiliary terms. We define for fixed \(t\in(0,1]\)  \(\mathcal{C}_{k,m}^{(\ell)}\) for integers \( k,m\) and \(\ell\geq0\) by
	\begin{equation}\label{eq:def-C}
		\mathcal{C}^{(\ell)}_{k,m}:=\sup_{0 \le s \le 2\pi}\left(\left|F^{(\ell)}\left(2\pi m+s - \frac{2\pi k}{\sqrt{t}}\right)\right|+ \left| F^{(\ell+1)}\left(2\pi m+s - \frac{2\pi k}{\sqrt{t}}\right) \right| \right),
	\end{equation}
	with \(\mathcal{C}^{(0)}_{k,m}=\mathcal{C}_{k,m}\)
	\begin{lemma}\label{lem:translation-bound}
		Fixed \(t\in(0,1]\). For every integer \(\ell\ge 0\) and every \(k\in\mathbb Z\),
		\[
		\sum_{m\in\mathbb Z} \mathcal{C}_{k,m}^{(\ell)}\le2\|F^{(\ell)}\|_{1,1}.
		\]
		Consequently, for every finite set \(K\subset\mathbb Z\),
		\[
		\sum_{k\in K}\sum_{m\in\mathbb Z} \mathcal{C}_{k,m}^{(\ell)}\le2|K|\,\|F^{(\ell)}\|_{1,1}.
		\]
	\end{lemma}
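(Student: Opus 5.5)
The plan is a reindexing argument. The shift \(2\pi k/\sqrt t\) is generally not a multiple of \(2\pi\), so it does not permute the blocks \([2\pi m,2\pi(m+1)]\) used in the definition~\eqref{eq:norm11} of \(\|\cdot\|_{1,1}\). It does, however, map each block into the union of at most two consecutive blocks, and this is where the factor \(2\) comes from.

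Fix \(t\in(0,1]\), \(\ell\ge0\) and \(k\in\mathbb Z\), and set \(c:=2\pi k/\sqrt t\). Write \(c=2\pi q+r\) with \(q\in\mathbb Z\) and \(r\in[0,2\pi)\); this is possible by taking \(q=\lfloor c/2\pi\rfloor\). Then for \(s\in[0,2\pi]\),
\[
2\pi m+s-c=2\pi(m-q)+(s-r),\qquad s-r\in(-2\pi,2\pi].
\]
Abbreviate \(G(z):=|F^{(\ell)}(z)|+|F^{(\ell+1)}(z)|\). I split the supremum over \(s\) in \eqref{eq:def-C} into two cases.
\begin{itemize}
\item[] \emph{Case \(s\ge r\).} Then \(s':=s-r\in[0,2\pi]\), and the argument equals \(2\pi(m-q)+s'\).
\item[] \emph{Case \(s<r\).} Then \(s'':=s-r+2\pi\in(0,2\pi]\), and the argument equals \(2\pi(m-q-1)+s''\).
\end{itemize}
Hence
\[
\mathcal C^{(\ell)}_{k,m}\le \sup_{0\le s'\le2\pi}G\bigl(2\pi(m-q)+s'\bigr)+\sup_{0\le s''\le2\pi}G\bigl(2\pi(m-q-1)+s''\bigr).
\]

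Summing over \(m\in\mathbb Z\), the maps \(m\mapsto m-q\) and \(m\mapsto m-q-1\) are bijections of \(\mathbb Z\). Each of the two resulting series is therefore exactly \(\|F^{(\ell)}\|_{1,1}\), which is finite by Lemma~\ref{lem:F-norm-finite}. Since all terms are nonnegative, the rearrangement is legitimate. This gives \(\sum_m\mathcal C^{(\ell)}_{k,m}\le2\|F^{(\ell)}\|_{1,1}\), with a bound uniform in \(k\) and in \(t\).

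The consequence for a finite set \(K\subset\mathbb Z\) follows by summing this uniform bound over \(k\in K\), which gives the factor \(|K|\). There is no real obstacle. The only point needing care is the bookkeeping of the endpoint cases \(s-r=2\pi\) and \(s-r\) near \(-2\pi\), and both are covered because \(\|\cdot\|_{1,1}\) takes suprema over the closed interval \([0,2\pi]\).
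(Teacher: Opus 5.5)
Your proposal is correct and takes the same route as the paper. The paper also notes that each shifted block \(I_{m,k}\) is covered by the two consecutive blocks starting at \(2\pi j_m\), with \(j_m=\lfloor m-k/\sqrt t\rfloor\) a constant shift of \(m\), and reindexes to get the factor \(2\); your case split on whether \(s\ge r\) or \(s<r\) makes the covering and the bound of the supremum by a sum of two suprema explicit, and treats \(k=0\) together with the other cases.
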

	
	\begin{proof}
		For \(k=0\), the inequality is immediate from the definition of \(\|F^{(\ell)}\|_{1,1}\). For \(k\neq0\), each interval 
		\[
		I_{m,k}:=\left[2\pi m-\frac{2\pi k}{\sqrt t},\,2\pi m-\frac{2\pi k}{\sqrt t}+2\pi\right],\qquad m\in\mathbb Z
		\] 
		is contained in the union of two consecutive intervals of the original partition, namely \([2\pi j_m, 2\pi j_m+2\pi]\) and \([2\pi j_m+2\pi, 2\pi j_m+4\pi]\) with \(j_m = \lfloor m - k/\sqrt t\rfloor\). The map \(m\mapsto j_m\) is a bijection of \(\mathbb Z\). Therefore, summing over \(m\in\mathbb Z\) and reindexing gives at most twice the sum over all canonical intervals, i.e.
		\[
		\sum_{m\in\mathbb Z} \mathcal{C}_{k,m}^{(\ell)}\leq 2\sum_{j\in\mathbb Z} \sup_{x\in[2\pi j,2\pi j+2\pi]} \bigl(|F^{(\ell)}(x)|+|F^{(\ell+1)}(x)|\bigr)= 2\|F^{(\ell)}\|_{1,1}.
		\]
		The finite-sum estimate follows by summing over \(k\in K\).
	\end{proof}
	
	With all the preceding tools, we now present the proof of Lemma~\ref{lem:heat_prop}.
	
	\begin{proof}[Proof of Lemma~\ref{lem:heat_prop}]
		For every integer \(\ell\ge 0\) and \(x\in\mathbb{R}\), define
		\[
		\widetilde f_t^{(\ell)}(x):=\sum_{m\in\mathbb Z}F^{(\ell)}\left(x-\frac{2\pi m}{\sqrt t}\right).
		\]
		For \(t\in(0,1]\), set
		\[
		f_t^{(\ell)}(x):=\chi_t(x)\,\widetilde f_t^{(\ell)}(x).
		\]
		For \(t>1\), fix \(T>1\). Define the fixed cut-off
		\[
		\chi^{T}(x):=\chi_{1}(x),
		\]
		and, with a slight abuse of notation,
		\[
		\prescript{}{T}{f_t^{(\ell)}}(x):=\chi^{T}(x)\,\widetilde f_t^{(\ell)}(x).
		\]
		When no confusion arises, we omit the left subscript \(T\).
		
		We first prove the uniform bound for an arbitrary fixed \(\ell\ge 0\) and \(t\in(0,1]\). For \(|m|\le 1+\frac1{\sqrt t}\), \(s\in[0,2\pi]\), and \(|k|\ge 4\). 
		
		We have the lower bound
		\begin{equation}\label{eq:Ap_A_main_lower}
			\left|2\pi m+s-\frac{2\pi k}{\sqrt t}\right|\geq \frac{2\pi}{\sqrt t}|k|-\frac{2\pi}{\sqrt t}-4\pi\ge \frac{2\pi}{\sqrt t}(|k|-3),
		\end{equation}
		and the upper bound
		\begin{equation}\label{eq:Ap_A_main_upper}
			\left|2\pi m+s-\frac{2\pi k}{\sqrt t}\right| \leq 2\pi|m|+2\pi+\frac{2\pi|k|}{\sqrt t}\le \frac{2\pi}{\sqrt t}(|k|+3).
		\end{equation}
		By \eqref{eq:polynomial-bound}, \eqref{eq:Ap_A_main_lower}, \eqref{eq:Ap_A_main_upper} and \(t\in(0,1]\), we get for \(|k|\ge 4\),
		\[
		\boxed{|F^{(\ell)}(2\pi m+s-\frac{2\pi k}{\sqrt t})|+|F^{(\ell+1)}(2\pi m+s-\frac{2\pi k}{\sqrt t})|\leq C_\ell\frac{(|k|+3)^{\ell+1}}{t^{(\ell+1)/2}}e^{-\frac{\pi^2}{t}(|k|-3)^2}.}
		\]
		
		Now, by Remark~\ref{rem:Ap_A_supp}, the only possibly nonzero contributions to \(\|f_t^{(\ell)}\|_{1,1}\) occur when \(|m|\le 1+\frac1{\sqrt t}\). Moreover,
		\[
		(f_t^{(\ell)})'(x)=\chi_t'(x)\,\widetilde f_t^{(\ell)}(x)+\chi_t(x)\,\widetilde f_t^{(\ell+1)}(x),
		\]
		where
		\[
		\widetilde f_t^{(\ell+1)}(x)=\sum_{n\in\mathbb Z} F^{(\ell+1)}\left(x-\frac{2\pi n}{\sqrt t}\right).
		\]
		Since \(F^{(\ell)}\) and \(F^{(\ell+1)}\) decay exponentially, the series converge uniformly on compact sets. Because \(\chi_t\) and \(\chi_t'\) are compactly supported and uniformly bounded,
		\[
		|(f_t^{(\ell)})'(2\pi m+s)|\leq C_\chi\left(|\widetilde f_t^{(\ell)}(2\pi m+s)|+|\widetilde f_t^{(\ell+1)}(2\pi m+s)|\right),
		\]
		where
		\[
		C_\chi:=\sup_{0<t\le1}\left(\|\chi_t\|_\infty+\|\chi_t'\|_\infty\right).
		\]
		
		Thus,
		\[
		\begin{split}
			\|f_t^{(\ell)}\|_{1,1} \leq C_\chi\sum_{|m|\le 1+\frac1{\sqrt t}}\left\{\sum_{|k|\ge 4}C_\ell t^{-(\ell+1)/2}(|k|+3)^{\ell+1}e^{-\frac{\pi^2}{t}(|k|-3)^2}+\sum_{|k|\le 3}\mathcal C_{k,m}^{(\ell)}\right\}.
		\end{split}
		\]
		Since
		\[
		\#\Bigl\{m\in\mathbb Z:\ |m|\le 1+\frac1{\sqrt t}\Bigr\}\le3+\frac2{\sqrt t}\le\frac5{\sqrt t},
		\]
		we obtain
		\[
		\begin{split}
			\|f_t^{(\ell)}\|_{1,1}\leq C_\chi\left\{C_\ell t^{-(\ell+1)/2}\frac5{\sqrt t}\sum_{|k|\ge 4}(|k|+3)^{\ell+1}e^{-\frac{\pi^2}{t}(|k|-3)^2}+\sum_{m\in\mathbb Z}\sum_{|k|\le 3}\mathcal C_{k,m}^{(\ell)}\right\}.
		\end{split}
		\]
		By Lemma~\ref{lem:gaussian-tail} with \(r=\ell+1\), the first term is bounded by a constant independent of \(t\in(0,1]\). The second term is bounded by Lemma~\ref{lem:F-norm-finite} and Lemma~\ref{lem:translation-bound}. Hence
		\[
		\sup_{t\in(0,1]}\|f_t^{(\ell)}\|_{1,1}< \infty.
		\]
		Now fix \(T>1\) and consider \(t\in(1,T]\). For this range, we use the fixed cut-off
		\[
		\chi^T:=\chi_1.
		\]
		It follows that
		\[
		\chi^T(2\pi m+s)=0 \qquad\text{for}\qquad |m|\ge2.
		\]
		Thus only \(m=0,-1\) can actually contribute; summing over \(|m|\le 1\) is a harmless overcount.
		
		For \(|m|\le 1\), we have the analogue of
		\eqref{eq:Ap_A_main_lower} and \eqref{eq:Ap_A_main_upper}:
		\[
		\left|2\pi m+s-\frac{2\pi k}{\sqrt t}\right|\geq 2\pi\left(\frac{|k|}{\sqrt t}-3\right)\ge 2\pi\left(\frac{|k|}{\sqrt T}-3 \right),
		\]
		and
		\[
		\left|2\pi m+s-\frac{2\pi k}{\sqrt t}\right|\leq 2\pi\left(|k|+3\right).
		\]
		Thus, for \(|k|>3\sqrt T\), the same polynomial estimate gives
		\[
		\boxed{|F^{(\ell)}(2\pi m+s-\frac{2\pi k}{\sqrt t})|+|F^{(\ell+1)}(2\pi m+s-\frac{2\pi k}{\sqrt t})|\leq C_\ell (|k|+3)^{\ell+1} e^{-\pi^2\left(\frac{|k|}{\sqrt T}-3\right)^2}.}
		\]
		Therefore,
		\begin{align}
			\|\prescript{}{T}{f_t^{(\ell)}}\|_{1,1} 
			&\leq C_\chi\sum_{|m|\le 1}\Bigg\{\sum_{|k|>3\sqrt T} C_\ell(|k|+3)^{\ell+1}e^{-\pi^2\left(\frac{|k|}{\sqrt T}-3\right)^2}+\sum_{|k|\le 3\sqrt T}\mathcal C_{k,m}^{(\ell)}\Bigg\} \notag \\
			&\leq C_\chi\Bigg\{3C_\ell\sum_{|k|>3\sqrt T}(|k|+3)^{\ell+1}e^{-\pi^2\left(\frac{|k|}{\sqrt T}-3\right)^2}+\sum_{|k|\le 3\sqrt T}\sum_{|m|\le 1}\mathcal C_{k,m}^{(\ell)}\Bigg\}. \label{eq:Ap_A_bound_T}
		\end{align}
		The first sum is bounded uniformly in \(t\in(1,T]\) by exponential decay in \(|k|\). The second sum is finite by Lemma~\ref{lem:F-norm-finite} and  Lemma~\ref{lem:translation-bound}. Hence
		\[
		\sup_{t\in(1,T]} \|\prescript{}{T}{f_t^{(\ell)}}\|_{1,1} < \infty.
		\]
		
		Combining the cases \(t\in(0,1]\) and \(t\in(1,T]\), we obtain
		\[
		\sup_{t\in(0,T]} \|f_t^{(\ell)}\|_{1,1} <\infty, \qquad \ell\ge 0,
		\]
		where for \(t>1\) we understand \(f_t^{(\ell)}\) as
		\(\prescript{}{T}{f_t^{(\ell)}}\).
		
		It remains to verify the scaling identities. For \(t\in(0,1]\) and \(x\in[-\pi,\pi]\), set \(y=x/\sqrt t\). Then \(y\in K_t\), and since \(\chi_t\equiv1\) on \(K_t\), all derivatives of \(\chi_t\) vanish at \(y\). Hence
		\[
		(f_t^{(0)})^{(\ell)}(y) = f_t^{(\ell)}(y).
		\]
		On the other hand, by the chain rule,
		\[
		\partial_x^\ell p_t(x) = \frac{1}{\sqrt t} \left(\frac{1}{\sqrt t}\right)^\ell (f_t^{(0)})^{(\ell)}(y) = \frac{1}{t^{(\ell+1)/2}} f_t^{(\ell)}(y).
		\]
		The same computation applies for \(t>1\) with the fixed cut-off \(\chi^T\), because \(\chi^T\equiv1\) on \([-\pi,\pi]\) and therefore \(\chi^T\) and all its derivatives vanish at \(y=x/\sqrt t\). Since \(p_t\) is \(2\pi\)-periodic, these identities extend from \([-\pi,\pi]\) to \([0,2\pi]\). This completes the proof of Lemma~\ref{lem:heat_prop}.
	\end{proof}
	
	\begin{remark}
		The most delicate case is \(0<t\le 1\), because as \(t\to 0^{+}\) the interval
		\[
		K_t=\left[-\frac{\pi}{\sqrt t},\frac{\pi}{\sqrt t}\right],
		\]
		expands indefinitely, and the rescaled Gaussian sum must be controlled uniformly. For \(t\geq 1\), the interval \(K_t\) is contained in \([-\pi,\pi]\). Therefore, for fixed \(T>1\) there exists a constant \(C_T<\infty\) such that
		\[
		\|\prescript{}{T}f_t^{(\ell)}\|_{1,1}\le C_T.
		\]
		This constant is finite for each fixed \(T\) and grows at most like \(\sqrt T\) as \(T\to\infty\), since by Lemma~\ref{lem:translation-bound} the finite sum over \(|k|\le 3\sqrt T\) in \eqref{eq:Ap_A_bound_T} contains \(O(\sqrt T)\) terms.
	\end{remark}
	
	\begin{remark}
		In the proof of Lemma~\ref{lem:heat_prop}, a general scheme was used to treat all derivatives of the heat kernel simultaneously. For the applications of the lemma, however, only \(p_t\) and \(\partial_x^2 p_t\) are needed; in the notation of the lemma, these correspond to \(f_t\) and \(h_t\), respectively.
	\end{remark}

	\section{Uniform bounds for the deterministic integrals of Proposition~\ref{prop:regularity_v}}\label{appendix:B}
	
	The functions \(\IA(z)\) and \(\IB(z)\) given by
	\[
	\IA(z) = \int_0^z \int_0^1 (z-x+y)^{\frac{\alpha}{2} - \frac{3}{2}} \, dy \, dx, \qquad
	\IB(z) = \int_0^z \int_0^1 (z-x+y)^{\frac{\alpha}{2} - \frac{3}{2}} x^{\nu} \, dy \, dx,
	\]
	where \(\nu:=(\beta-2\alpha)/2\), are uniformly bounded for all \(z>0\).  Indeed, integrating with respect to \(y\) and then changing variable as \(u = z - x\) gives
	\[
	\IA(z) = \frac{1}{a}\int_0^z \bigl(u^{-a} - (u+1)^{-a}\bigr)\,du, \qquad \IB(z) = \frac{1}{a}\int_0^z \bigl(u^{-a} - (u+1)^{-a}\bigr)(z-u)^{\nu}\,du,
	\]
	where \(a=(1-\alpha)/2\).  Note that \(-1<\nu<0\): indeed \(\nu<0\) because \(2\alpha>2/3>1/2>\beta\), and \(\nu>-1/3>-1\) because \(\beta>1/3\) and \(\alpha<1/2\). Recall that \(\Beta(\cdot,\cdot)\) denotes the Euler beta function.
	
	\noindent \textbf{Bound for \(\IA(z)\).} The integrand \(\phi(u)=u^{-a}-(u+1)^{-a}>0\), so \(\IA(z)\) is increasing. Its limit is
	\[
	\lim_{z\to\infty} \IA(z) = \frac{1}{a}\int_0^\infty \phi(u)\,du = \frac{1}{a(1-a)} = \frac{4}{1-\alpha^2}.
	\]
	Hence
	\[
	\boxed{\IA(z) \le \frac{4}{1-\alpha^2}, \qquad \forall z>0}
	\]
	
	\noindent \textbf{Bound for \(\IB(z)\).} We use \(\phi(u)\le \min(u^{-a},\, a u^{-a-1})\). Splitting at \(u=1\):
	\[
	\IB(z) \le \frac{1}{a}\underbrace{\int_0^{\min(z,1)} u^{-a}(z-u)^{\nu}\,du}_{=:J_1} + \underbrace{\int_{\min(z,1)}^{z} u^{-a-1}(z-u)^{\nu}\,du}_{=:J_2}.
	\]
	For \(J_1\), if \(z\le1\), with \(u=zt\), \(J_1 = z^{1-a+\nu}\Beta(1-a,1+\nu)\le \Beta(1-a,1+\nu)\); if \(z>1\), since \(z-u\ge1-u\) and \(\nu<0\), \(J_1\le \int_0^1 u^{-a}(1-u)^{\nu}du = \Beta(1-a,1+\nu)\). Thus \(J_1\le \Beta(1-a,1+\nu)\), the Beta integral being convergent because \(a<1\) and \(\nu>-1\).
	
	For \(J_2\), if \(z>2\), split \([1,z/2]\) and \([z/2,z]\):
	\[
	\int_1^{z/2} u^{-a-1}(z-u)^{\nu}du \le (z/2)^{\nu}\int_1^{z/2}u^{-a-1}du \le \frac{1}{a}(z/2)^{\nu}\le \frac{1}{a},
	\]
	\[
	\int_{z/2}^{z} u^{-a-1}(z-u)^{\nu}du \le (z/2)^{-a-1}\int_0^{z/2}v^{\nu}dv = \frac{1}{\nu+1}(z/2)^{-a+\nu}\le \frac{1}{\nu+1}.
	\]
	For \(1<z\le2\) one has \(J_2\le \int_{1}^{z}(z-u)^{\nu}du=\frac{(z-1)^{\nu+1}}{\nu+1}\le\frac{1}{\nu+1}\), while \(J_2=0\) for \(z\le1\). Hence \(J_2\le 1/a + 1/(\nu+1)\) in all cases. Therefore,
	\[
	\boxed{\IB(z) \le \frac{1}{a}\Beta(1-a,1+\nu) + \frac{1}{a} + \frac{1}{\nu+1} =: K_B(\alpha,\beta)}
	\]

	In view of Remark~\ref{rem:global_vs_local}, to extend the regularity estimates from Proposition~\ref{prop:regularity_v} to all of \([0,T]\), it is also necessary to bound the following integral.
	\begin{equation*}
		I_C := \int_{t}^{t+h} (t+h-s)^{\frac{\alpha}{2}-\frac{1}{2}} \Bigl(1+s^{\frac{\beta-2\alpha}{2}}\Bigr)\,ds,
	\end{equation*}
	We observe that $\frac{\alpha}{2}-\frac{1}{2} = -a$, with the same notation as above. Performing the change of variables $u=t+h-s$, so that $ds=-du$, we obtain
	\[
	I_C = \int_{0}^{h} u^{-a} \Bigl(1+(t+h-u)^{\nu}\Bigr)\,du = I_D + I_E,
	\]
	where
	\begin{align*}
		I_D &:= \int_{0}^{h} u^{-a}\,du = \frac{h^{1-a}}{1-a} = \frac{2}{\alpha+1}\,h^{\frac{\alpha+1}{2}},\\[4pt]
		I_E &:= \int_{0}^{h} u^{-a}(t+h-u)^{\nu}\,du.
	\end{align*}
	To bound $I_E$ uniformly in $t$, we use that $\nu<0$ and $t+h-u\ge h-u$, hence $(t+h-u)^{\nu}\le (h-u)^{\nu}$:
	\[
	I_E \le \int_{0}^{h} u^{-a}(h-u)^{\nu}\,du = h^{-a+\nu+1}\Beta(1-a,1+\nu) = h^{\frac{\beta-\alpha+1}{2}}\Beta\!\Bigl(\frac{\alpha+1}{2},\frac{\beta-2\alpha+2}{2}\Bigr).
	\]
	Moreover, since $\frac{\alpha+1}{2} > \frac{\beta-\alpha+1}{2}$ and \(0<h<1\), we have $h^{\frac{\alpha+1}{2}} < h^{\frac{\beta-\alpha+1}{2}}$. Therefore,
	\begin{equation*}
		\boxed{I_C \le \Bigl(\frac{2}{\alpha+1} + \Beta(1-a,1+\nu)\Bigr)\,h^{\frac{\beta-\alpha+1}{2}}},
	\end{equation*}
	
	\begin{remark}\label{rem:uniform_case_Prop_reg}
		The exponent \(\nu=(\beta-2\alpha)/2\) is the one produced by the bound of Lemma~\ref{lem:rp_composition}; see Lemma~\ref{lem:Lambda}. The bound of \(I_B\) and \(I_C\) above is used only in Remark~\ref{rem:global_vs_local}, the proof of Proposition~\ref{prop:regularity_v} requiring \(\IA\) alone.
	\end{remark}

	\section{Properties of \texorpdfstring{$\Upsilon$}{Upsilon}}\label{appendix:C}
	In this appendix we define and study the auxiliary function \(\Upsilon\), which plays a central role in the computation of the temporal quartic variations of the stationary process \(\psi\).
	
	\medskip\noindent\textbf{Definition and basic properties of \(\Upsilon\).}
	
	For \(x\ge0\), define
	\begin{equation}\label{eq:ups_definition}
		\Upsilon(x) := \sum_{k=-\infty}^{\infty} \frac{e^{-(k^2+1)x}}{k^2+1}.     
	\end{equation}
	
	The series converges absolutely and uniformly on \([0,\infty)\).
	
	\noindent \textbf{Value at zero.}
	\[
	\Upsilon(0) = \sum_{k=-\infty}^{\infty} \frac{1}{k^2+1} = \pi\coth\pi. 
	\]
	a standard identity (see, e.g., \cite{GR}, Formula 1.421.4).
	
	\noindent \textbf{Monotonicity and convexity.} For \(x>0\),
	\begin{align}
		&\Upsilon'\!(x) = -\sum_{k=-\infty}^{\infty} e^{-(k^2+1)x}  < 0,\label{eq:ups_1_der} \\[2pt]
		&\Upsilon''\!(x) = \sum_{k=-\infty}^{\infty} (k^2+1)e^{-(k^2+1)x} > 0, \label{eq:ups_2_der}\\[2pt]
		&\Upsilon'''\!(x) = -\sum_{k=-\infty}^{\infty} (k^2+1)^{2}e^{-(k^2+1)x} < 0. \label{eq:ups_3_der}
	\end{align}
	
	\medskip\noindent\textbf{Integral representation and Jacobi theta function.}
	
	The third Jacobi theta function (with argument \(z=0\)) is
	\[
	\vartheta_3(q) = \sum_{k=-\infty}^{\infty} q^{k^2} = 1 + 2\sum_{k=1}^{\infty} q^{k^2}, \quad |q|<1. 
	\]
	Using the identity \(\frac{1}{k^2+1} = \int_0^\infty e^{-(k^2+1)u}\,du\) and interchanging the sum and integral (justified by absolute convergence), we obtain
	\begin{equation}\label{eq:ups_integral}
		\Upsilon(x) = \int_0^\infty e^{-(x+u)} \sum_{k=-\infty}^{\infty} e^{-k^2(x+u)}\,du = \int_x^\infty e^{-v}\vartheta_3(e^{-v})\,dv.     
	\end{equation}

	\medskip\noindent\textbf{Modular transformation and asymptotic expansion for small \(x\).}
	
	Set
	\[
	f(v) := e^{-v}\vartheta_3(e^{-v}), \qquad v>0,
	\]
	so that \(\Upsilon(x) = \int_x^\infty f(v)\,dv\) and
	\[
	\Upsilon(0)-\Upsilon(x) = \int_0^x f(v)\,dv.
	\]
	
	\medskip\noindent\textbf{Jacobi's modular identity and bounds for the theta remainder.}
	
	Jacobi's modular identity (see \cite{WW}, Chapter~21) states that for every \(v>0\),
	\[
	\vartheta_3(e^{-v}) = \sqrt{\frac{\pi}{v}}\;\vartheta_3\!\bigl(e^{-\pi^2/v}\bigr).
	\]
	Define the exponentially small remainder
	\[
	E_0(v) := \vartheta_3\!\bigl(e^{-\pi^2/v}\bigr) - 1 = 2\sum_{n=1}^{\infty} e^{-\pi^2 n^2/v}.
	\]
	For \(0<v\le 1\) we can bound \(E_0(v)\) by a geometric series:
	\[
	0 \le E_0(v) \le 2\sum_{n=1}^{\infty} e^{-\pi^2 n/v} = \frac{2e^{-\pi^2/v}}{1-e^{-\pi^2/v}} \le C_0 e^{-\pi^2/v},
	\]
	with \(C_0 := \frac{2}{1-e^{-\pi^2}}\).  
	Differentiating term by term (legitimate for \(v>0\)) yields
	\[
	E_0'(v) = 2\sum_{n=1}^{\infty} \frac{\pi^2 n^2}{v^2} e^{-\pi^2 n^2/v}.
	\]
	To bound \(E_0'(v)\), rewrite the summand using the inequality \(t^2 e^{-t} \le 4e^{-t/2}\) (valid for all \(t\ge0\)). Setting \(t = \pi^2 n^2/v\) we obtain
	\[
	\frac{\pi^2 n^2}{v^2} e^{-\pi^2 n^2/v}= \frac{1}{\pi^2 n^2}\, t^2 e^{-t}\le \frac{4}{\pi^2 n^2}\, e^{-t/2}= \frac{4}{\pi^2 n^2}\, e^{-\pi^2 n^2/(2v)}\le \frac{4}{\pi^2 n^2}\, e^{-\pi^2/(2v)},
	\]
	where in the last step we used \(n\ge1\). Summing over \(n\) and using \(\sum_{n=1}^\infty 1/n^2 = \pi^2/6\),
	\[
	|E_0'(v)| \le 2\sum_{n=1}^{\infty} \frac{4}{\pi^2 n^2} e^{-\pi^2/(2v)}= \frac{8}{\pi^2}\frac{\pi^2}{6} e^{-\pi^2/(2v)} = \frac{4}{3} e^{-\pi^2/(2v)}.
	\]
	Thus, for \(0<v\le1\), both \(E_0(v)\) and \(E_0'(v)\) are bounded by constants times an exponential that decays faster than any power of \(v\).
	
	\medskip\noindent\textbf{Truncated Taylor expansion of \(e^{-v.}\)}
	
	For any integer \(M\ge0\) and \(v>0\), Taylor's formula with Lagrange remainder gives
	\[
	e^{-v} = \sum_{m=0}^{M} \frac{(-v)^m}{m!} + R_M(v),
	\qquad
	|R_M(v)| \le \frac{v^{M+1}}{(M+1)!}.
	\]
	
	\medskip\noindent\textbf{Finite expansion of \(f(v)\) with explicit error bound.}
	
	Insert the modular identity and the expansion of \(e^{-v}\) into \(f(v)\):
	\[
	f(v) = \sqrt{\frac{\pi}{v}}\, e^{-v}\bigl(1 + E_0(v)\bigr)= \sqrt{\pi}\, v^{-1/2} \Bigl( \sum_{m=0}^{M} \frac{(-v)^m}{m!} + R_M(v) \Bigr)  \bigl(1 + E_0(v)\bigr).
	\]
	Distributing,
	\[
	f(v) = \underbrace{\sqrt{\pi} \sum_{m=0}^{M} \frac{(-1)^m}{m!} v^{m-1/2}}_{=:S_M(v)}+ T_1(v) + T_2(v) + T_3(v),
	\]
	where
	\[
	\begin{aligned}
		T_1(v) &:= \sqrt{\pi}\, v^{-1/2} R_M(v),\\
		T_2(v) &:= \sqrt{\pi}\, v^{-1/2} \Bigl(\sum_{m=0}^{M} \frac{(-v)^m}{m!}\Bigr) E_0(v),\\
		T_3(v) &:= \sqrt{\pi}\, v^{-1/2} R_M(v) E_0(v).
	\end{aligned}
	\]
	
	We now bound each remainder term for \(0<v\le 1\).
	
	\begin{itemize}
		\item For \(T_1\): using the bound on \(R_M\),
		\[
		|T_1(v)| \le \sqrt{\pi}\, v^{-1/2} \frac{v^{M+1}}{(M+1)!}= \frac{\sqrt{\pi}}{(M+1)!}\, v^{M+1/2}.
		\]
		
		\item For \(T_2\): the finite sum is bounded by \(\sum_{m=0}^{M} \frac{v^m}{m!} \le e^v \le e\) for \(v\le1\). Moreover, \(|E_0(v)|\le C_0 e^{-\pi^2/v}\). Hence
		\[
		|T_2(v)| \le \sqrt{\pi}\, v^{-1/2} \cdot e \cdot C_0 e^{-\pi^2/v} = e C_0\sqrt{\pi}\; v^{-1/2} e^{-\pi^2/v}.
		\]
		
		\item For \(T_3\): using both bounds,
		\[
		|T_3(v)| \le \sqrt{\pi}\, v^{-1/2} \frac{v^{M+1}}{(M+1)!}\, C_0 e^{-\pi^2/v} = \frac{C_0\sqrt{\pi}}{(M+1)!}\, v^{M+1/2} e^{-\pi^2/v}.
		\]
	\end{itemize}
	
	The crucial observation is that the exponential \(e^{-\pi^2/v}\) decays faster than any power \(v^{N}\) for $N>0$. To handle this rigorously, fix an integer \(M\) and consider the function
	\[
	g(v) = 
	\begin{cases}
		v^{-M-1} e^{-\pi^2/v}, & 0<v\le 1,\\0, & v=0.
	\end{cases}
	\]
	Since \(\lim_{v\to0^+} g(v)=0\), \(g\) is continuous on the compact interval \([0,1]\); hence it attains a maximum \(D_M < \infty\). Therefore, for all \(v\in(0,1]\),
	\[
	v^{-1/2} e^{-\pi^2/v} = v^{M+1/2} g(v) \le D_M\, v^{M+1/2}.
	\]
	Consequently, both \(|T_2(v)|\) and \(|T_3(v)|\) can be bounded by constant multiples of \(v^{M+1/2}\). Combining the three estimates we obtain a constant \(K_M\) (depending only on \(M\)) such that
	\begin{equation}\label{eq:f_bound}
		|f(v) - S_M(v)| \le K_M\, v^{M+1/2}, \qquad \forall\,v\in(0,1].     
	\end{equation}
	
	Thus, for any integer \(M\ge0\), the finite sum \(S_M(v)\) approximates \(f(v)\) with an error \(O(v^{M+1/2})\) uniformly on \((0,1]\).
	
	\medskip\noindent\textbf{Integration to obtain the expansion of \(\Upsilon\).}
	
	Fix an integer \(N\ge0\) and choose \(M = N+1\) in the above expansion. Then for any \(0<x\le 1\),
	\[
	\int_0^x f(v)\,dv = \int_0^x S_{N+1}(v)\,dv + \int_0^x \bigl(f(v)-S_{N+1}(v)\bigr)\,dv.
	\]
	
	The first integral is elementary:
	\[
	\int_0^x S_{N+1}(v)\,dv = \sqrt{\pi} \sum_{m=0}^{N+1} \frac{(-1)^m}{m!} \int_0^x v^{m-1/2}\,dv= \sqrt{\pi} \sum_{m=0}^{N+1} \frac{(-1)^m}{m!}\, \frac{x^{m+1/2}}{m+1/2}.
	\]
	
	For the second integral, \eqref{eq:f_bound} with \(M=N+1\) gives
	\[
	\biggl|\int_0^x \bigl(f(v)-S_{N+1}(v)\bigr)\,dv\biggr|\le \int_0^x K_{N+1}\, v^{(N+1)+1/2}\,dv= \frac{K_{N+1}}{N+5/2}\, x^{N+5/2}.
	\]
	
	We obtain, for every \(x\) with \(0<x\le 1\),
	\[
	\Upsilon(0)-\Upsilon(x) = \sqrt{\pi} \sum_{m=0}^{N+1} \frac{(-1)^m}{m!}\, \frac{x^{m+1/2}}{m+1/2}  + O\bigl(x^{N+5/2}\bigr),
	\]
	where the constant implied by the \(O\)-notation depends on \(N\) but not on \(x\).
	Letting \(N\) be arbitrary and using \(\Upsilon(0)=\pi\coth\pi\), we obtain the complete asymptotic expansion, valid as \(x\to0^+\) (in fact, uniformly for \(0<x\le 1\)):
	\begin{equation}\label{eq:ups_asymp_full}
		\Upsilon(x) \sim \pi\coth\pi - \sqrt{\pi}\sum_{m=0}^{\infty} \frac{(-1)^m}{m!\,(m+1/2)}\, x^{m+1/2}.
	\end{equation}
	where the symbol \(\sim\) denotes an asymptotic expansion in the sense of Poincaré (see~\cite[Chapter~1]{Olver}): truncating the series at any finite \(m\) gives an error of the order of the first omitted term.  Explicitly, for \(0<x\leq1\),
	\begin{equation}\label{eq:ups:expansion}
		\boxed{\Upsilon(x) = \pi\coth\pi - 2\sqrt{\pi x} + O(x^{3/2})}
	\end{equation}
	
	\medskip\noindent\textbf{Asymptotic expansions of the derivatives.}
	
	The derivatives of \(\Upsilon\) are given by the series
	\[
	\Upsilon'(x) = -\sum_{k=-\infty}^{\infty} e^{-(k^2+1)x} = -f(x), \qquad \Upsilon''(x) = \sum_{k=-\infty}^{\infty} (k^2+1)e^{-(k^2+1)x} = -f'(x),
	\]
	where \(f(v)=e^{-v}\vartheta_3(e^{-v})\) as before.  
	The expansion of \(\Upsilon'(x)\) follows immediately from the work of the previous section; it suffices to change the sign in \eqref{eq:f_bound} and the subsequent formulas. Thus, for every integer \(M\ge0\) there is a finite sum
	\[
	\widetilde S_M(v) := -\sqrt{\pi}\sum_{m=0}^{M} \frac{(-1)^m}{m!} v^{m-1/2}
	\]
	and a constant \(\widetilde K_M\) such that
	\[
	|\Upsilon'(v) - \widetilde S_M(v)| \le \widetilde K_M\, v^{M+1/2}, \qquad 0<v\le1.
	\]
	Consequently, as \(x\to0^+\) (in fact, uniformly for \(0<x\le1\)),
	\begin{equation}\label{eq:ups:expansion1}
		\Upsilon'(x) = -\sqrt{\frac{\pi}{x}} + \sqrt{\pi}\,x^{1/2} + O(x^{3/2}).    
	\end{equation}

	We now obtain the expansion of \(\Upsilon''(x)\).
	From \eqref{eq:ups_integral} we have the explicit expression
	\[
	f'(v) = \sqrt{\pi}\,\frac{d}{dv}\Bigl[ v^{-1/2} e^{-v}\bigl(1+E_0(v)\bigr) \Bigr].
	\]
	Computing the derivative,
	\begin{equation}\label{eq:fprime_exact}
		\begin{aligned}
			f'(v) = \sqrt{\pi}\, e^{-v}\Bigl( &-\frac12 v^{-3/2}\bigl(1+E_0(v)\bigr)   - v^{-1/2}\bigl(1+E_0(v)\bigr) \\     
			&+ v^{-1/2} E_0'(v) \Bigr).
		\end{aligned}
	\end{equation}
	Recall the bounds obtained above: for \(0<v\le1\),
	\[
	|E_0(v)| \le C_0 e^{-\pi^2/v}, \qquad  |E_0'(v)| \le \frac43 e^{-\pi^2/(2v)}.
	\]
	Both are exponentially small and will be absorbed into the remainder estimates.
	
	Fix an integer \(M\ge0\) and expand \(e^{-v}\) with the Lagrange remainder:
	\[
	e^{-v} = \sum_{m=0}^{M} \frac{(-v)^m}{m!} + R_M(v), \qquad  |R_M(v)| \le \frac{v^{M+1}}{(M+1)!}.
	\]
	Insert this into \eqref{eq:fprime_exact} and separate the terms that do not contain \(E_0\) or \(E_0'\):
	\[
	\begin{aligned}
		f'(v) &= \sqrt{\pi}\sum_{m=0}^{M} \frac{(-1)^m}{m!} v^{m} \Bigl( -\frac12 v^{-3/2} - v^{-1/2} \Bigr) \\
		&\quad + \sqrt{\pi}\, R_M(v) \Bigl( -\frac12 v^{-3/2} - v^{-1/2} \Bigr) \\
		&\quad + \sqrt{\pi}\, e^{-v} \Bigl( -\frac12 v^{-3/2} E_0(v) - v^{-1/2} E_0(v) + v^{-1/2} E_0'(v) \Bigr).
	\end{aligned}
	\]
	The last line contains all the exponentially small contributions; we denote it by \(T_{\exp}(v)\).  
	The main part is the finite sum
	\[
	S^{(2)}_M(v) := \sqrt{\pi}\sum_{m=0}^{M} \frac{(-1)^m}{m!}  \Bigl( -\frac12 v^{m-3/2} - v^{m-1/2} \Bigr).
	\]
	
	We now estimate the two remainder terms for \(0<v\le1\).
	
	\begin{itemize}
		\item \textbf{Polynomial remainder term.}
		\[
		\bigl|\sqrt{\pi}\, R_M(v) (-\frac12 v^{-3/2} - v^{-1/2})\bigr|\le \sqrt{\pi}\frac{v^{M+1}}{(M+1)!}\bigl(\frac12 v^{-3/2}+v^{-1/2}\bigr) \le \frac{3\sqrt{\pi}}{2(M+1)!} v^{M-1/2}.
		\]
		
		\item \textbf{Exponentially small term.}
		Using \(|e^{-v}|\le1\) for \(v\ge0\) and the bounds on \(E_0, E_0'\), we get
		\[
		|T_{\exp}(v)| \le \sqrt{\pi}\Bigl( \frac{C_0}{2} v^{-3/2}  e^{-\pi^2/v} +C_0 v^{-1/2}  e^{-\pi^2/v} + \frac43v^{-1/2}  e^{-\pi^2/(2v)} \Bigr).
		\]
		As before, the exponentials dominate any power. Define a continuous function on \([0,1]\) by
		\[
		h(v) = 
		\begin{cases}
			v^{-M-1} e^{-\pi^2/(2v)}, & 0<v\le1,\\ 0, 
			& v=0.
		\end{cases}
		\]
		Since \(\lim_{v\to0^+} h(v)=0\), \(h\) is bounded by some constant \(H_M\) on \([0,1]\).  Consequently,
		\[
		v^{-1/2} e^{-\pi^2 / v}\leq v^{-1/2} e^{-\pi^2 / (2v)} = v^{M+1/2} h(v) \leq H_M v^{M+1/2} \leq H_M v^{M-1/2}
		\]
		for \(0 < v \leq 1\). Similarly
		\[
		v^{-3/2} e^{-\pi^2 / v} \leq v^{-3/2} e^{-\pi^2 / (2v)} = v^{M-1/2} h(v) \leq H_M v^{M-1/2}
		\]
		
		All terms in \(T_{\exp}(v)\) are therefore bounded by a constant multiple of \(v^{M-1/2}\).
	\end{itemize}
	
	Summing the two bounds, there exists a constant \(C'_M\) such that
	\[
	|f'(v) - S^{(2)}_M(v)| \le C'_M\, v^{M-1/2}, \qquad 0<v\le1.
	\]
	
	Finally, since \(\Upsilon''(x) = -f'(x)\), we obtain for every integer \(M \geq 0\),
	
	\[
	\boxed{\Upsilon''(x) = \sqrt{\pi} \sum_{n=0}^{M} \frac{(-1)^{n+1}}{n!} \left(n - \frac{1}{2}\right) x^{n-3/2} + O\left(x^{M-1/2}\right)}
	\]
	
	for \(0 < x \leq 1\).
	Choosing \(M\) large enough (e.g., \(M=3\) to see the first few terms) and expanding,
	
	\begin{align*}
		\Upsilon''(x) &= \sqrt{\pi} \left[ \frac{1}{2} x^{-3/2} + \frac{1}{2} x^{-1/2} - \frac{3}{4} x^{1/2} + \frac{5}{12} x^{3/2} \right] + O(x^{5/2}) \\
		&= \frac{\sqrt{\pi}}{2} x^{-3/2} + \frac{\sqrt{\pi}}{2} x^{-1/2} - \frac{3\sqrt{\pi}}{4} x^{1/2} + \frac{5\sqrt{\pi}}{12} x^{3/2} + O(x^{5/2}).
	\end{align*}
	
	In particular, we have
	\begin{equation}\label{eq:ups:expansion2}
		\Upsilon''(x)= \frac{\sqrt{\pi}}{2}x^{-3/2} + \frac{\sqrt{\pi}}{2}x^{-1/2} + O(x^{1/2})
	\end{equation}
	This expansion is uniform for \(0<x\le1\) and provides the exact singular behaviour of the second derivative near the origin. Together with the expansion of \(\Upsilon\) itself, it will be instrumental in computing the variations of \(\psi\).

\end{appendices}

\bibliographystyle{plain}

\section*{Data availability statement}

Data sharing is not applicable to this article as no data sets 
were generated or analysed during the current study.

 \section*{Acknowledgements}

The author Christian Olivera is partially supported by  by FAPESP-ANR by the grant Stochastic and Deterministic Analysis for Irregular Models$-2022/03379-0$  and CNPq by the grant $422145/2023-8$.  C. Tudor  acknowledges support from   the  ANR project SDAIM 22-CE40-0015, the MATHAMSUD grant 24-MATH-04 SDE-EXPLORE,  and  by the Ministry of Research, Innovation and Digitalization (Romania), grant CF-194-PNRR-III-C9-2023. C. Tudor also acknowledges the support of the CDP C2EMPI, together with the French State under the France-2030 programme, the University of Lille, the Initiative of Excellence of the University of Lille, the European Metropolis of Lille for their funding and support of the R-CDP-24-004-C2EMPI project.   The author Simon Chony Acosta is supported by the fellowship FAPESP $2024/06348-3$.

 \section*{Conflict of Interest}
There is no conflict of interests.

\bibliographystyle{plain}

\end{document}